\DeclareMathOperator{\res}{res}
\newtheorem{theorem}{Theorem}[section]
\newtheorem{lemma}[theorem]{Lemma}
\newtheorem{proposition}[theorem]{Proposition}
\newtheorem{remark}[theorem]{Remark}
\newtheorem{definition}[theorem]{Definition}
\newtheorem{example}[theorem]{Example}
\numberwithin{equation}{section}
\newcommand{\bz}{{\mathbb B}}
\newcommand{\cz}{{\mathbb C}}
\newcommand{\nz}{{\mathbb N}}
\newcommand{\rz}{{\mathbb R}}
\newcommand{\calC}{\mathcal{C}}
\newcommand{\calH}{\mathcal{H}}
\newcommand{\calK}{\mathcal{K}}
\newcommand{\scrC}{\mathscr{C}}
\newcommand{\scrD}{\mathscr{D}}
\newcommand{\scrE}{\mathscr{E}}
\newcommand{\scrL}{\mathscr{L}}
\newcommand{\scrS}{\mathscr{S}}
\newcommand{\ulA}{\underline{A}}
\newcommand{\ulwhA}{\underline{\widehat{A}}}
\newcommand{\ulwhB}{\underline{\widehat{B}}}
\newcommand{\ulE}{\underline{\mathscr{E}}}
\newcommand{\ulwhE}{\underline{\widehat{\mathscr{E}}}}
\newcommand{\comp}{\mathrm{comp}}
\newcommand{\dbar}{d\hspace*{-0.08em}\bar{}\hspace*{0.1em}}
\newcommand{\eps}{\varepsilon}
\newcommand{\forget}[1]{}
\newcommand{\lra}{\longrightarrow}
\newcommand{\re}{\mathrm{Re}\,}
\newcommand{\spk}[1]{\langle#1\rangle}
\newcommand{\wh}{\widehat}
\newcommand{\wt}{\widetilde}
\begin{document}
\title[Bounded $H_\infty$-calculus for cone differential operators]%
{Bounded $H_\infty$-calculus for\\ cone differential operators}

\author{E. Schrohe}
\address{Leibniz Universit\"at Hannover, Institut f\"ur Analysis, Hannover (Germany)}
\email{schrohe@math.uni-hannover.de}

\author{J.\ Seiler}
\address{Universit\`a di Torino, Dipartimento di Matematica, Torino (Italy)}
\email{joerg.seiler@unito.it}

\begin{abstract}
We prove that parameter-elliptic extensions of cone differential operators have a bounded $H_\infty$-calculus. 
Applications concern the  Laplacian and the porous medium equation on manifolds with warped conical singularities. 
\end{abstract}

\maketitle
\section{Introduction}\label{sec:intro}

We show that closed extensions of differential operators on manifolds with conical 
singularities, which are parameter-elliptic with respect to a sector 
 $$\Lambda=\Lambda(\theta)=\left\{re^{i\varphi}:
     r\ge 0,\;\theta\le\varphi\le 2\pi-\theta\right\},\qquad 0<\theta<\pi,$$
admit a bounded $H_\infty$-calculus in the natural weighted 
$L_p$-Sobolev spaces, $1<p<+\infty$; 
see Theorem \ref{thm:hinfty} for the precise formulation. 
To this end we combine our  investigations on this subject in \cite{CSS1}, \cite{CSS2}, \cite{RoidosSchr}, \cite{SchrSe}  
with the results of Gil, Krainer and Mendoza \cite{GiMe}, \cite{GKM2}, \cite{GKM}. 
Our main analytic tool is the calculus of parameter-dependent cone pseudodifferential operators, 
cf. Schulze \cite{Schu1}, \cite{Schu2}; for a concise summary we refer the reader to the appendix of \cite{SchrSe}.
The ellipticity conditions require the invertibility of both the interior symbol and the conormal symbol as well as 
resolvent estimates for the model cone operator, see conditions (E1), (E2), (E3) in Section \ref{sec:03}. 

Let $\bz$ be a compact smooth manifold with boundary $X=\partial\bz$; the dimension of $X$ is denoted by $n$. 
We shall identify a collar neighborhood of the boundary with $[0,1)\times X$ 
and denote by $t$ the variable of $[0,1)$. 
A \textit{cone differential operator} $A$ of order $\mu\in\nz$ is a $\mu$-th order differential operator with smooth 
coefficients on the interior of $\bz$ and a specific structure in the collar neighborhood, namely, 
\begin{equation}\label{eq:intro01}
 A=t^{-\mu}\sum_{k=0}^\mu a_k(t)(-t\partial_t)^k\quad \text{with }a_k\in\scrC^\infty([0,1)\mathrm{Diff}^{\mu-k}(X)),
\end{equation}
where $\mathrm{Diff}^{j}(X)$ denotes the Fr\' echet space of differential operators of order at most $j$ on $X$. 
In general, we will assume all operators to act on sections 
of vector bundles over $\bz$, but for simplicity we do not indicate the vector bundles in the notation.  

We will consider a closed extension $\ulA$ of $A$, considered as an unbounded operator 
\begin{equation}\label{eq:intro02}
 A:\scrC^{\infty}_{\comp}(\mathrm{int}\,\bz)\subset\calH^{s,\gamma}_p(\bz)\longrightarrow 
 \calH^{s,\gamma}_p(\bz)
\end{equation}
in a weighted $L_p$-Sobolev space  $\calH^{s,\gamma}_p(\bz)$ of functions on $\bz$ of smoothness $s$ and 
weight $\gamma\in \rz$, defined in Section \ref{sec:2.1.2}.

Suppose that $\Lambda$ is a sector of minimal growth, i.e., for $R$ sufficiently large,
$$\sup_{\lambda\in\Lambda,\,|\lambda|\ge R}
\|\lambda (\lambda-\ulA)^{-1}\|_{\scrL(\calH^{s,\gamma}_p(\bz))}<+\infty.$$ 
Writing $\ulA_c:=\ulA+cI$ with $c\ge R$  one can then define 
the operator $f(\ulA_c)$  by 
$$f(\ulA_c) = \frac 1{2\pi i} \int_{\partial \Lambda} f(\lambda) (\lambda -\ulA_c )^{-1}\, d\lambda$$ 
for every bounded holomorphic function $f:\cz\setminus\Lambda\to\cz$ $($actually, the above Dunford integral 
makes sense only for functions $f$ decaying with some positive power rate at infinity; the case of general $f$ 
involves an approximation argument$)$. One says that $\ulA_c$ admits a bounded $H_\infty$-calculus, 
if $ \|f(\ulA_c)\|_{\scrL(\calH^{s,\gamma}_p(\bz))}\le C\|f\|_{\infty}$ for a constant $C$ independent of $f$;  
see \cite{DHP} or  \cite{KW} for more details.
The $H_\infty$-calculus and the related notion of maximal regularity play an 
essential role in the analysis of non-linear parabolic evolution equations in the functional-analytic approach based 
on semi-group theory.  

A prototype of a cone differential operator, of importance in many applications, is the Laplacian with respect 
to a conically degenerate  metric, i.e. a Riemannian metric in the interior of $\bz$ which in the collar neighborhood 
is of the form 
$$ g= dt^2 + t^2 h(t)$$
with a smooth family $h(t)$, $0\le t\le 1$, of Riemannian metrics on $X$. 
One speaks of a straight conical metric, if $h$ is independent of $t$, otherwise of a warped conical metric. 
We find sufficient conditions for an extension $\underline\Delta$ of the (warped) Laplacian to satisfy the above 
assumptions (E1), (E2) and (E3), see Theorem \ref{thm:5.3}. 
Finally, we outline how these results can be used to show the existence of a short time solution to the porous 
medium equation on manifolds with warped conical singularities, thus improving on earlier work in \cite{RoidosSchrPME} 
for straight  cones. 

The paper is structured as follows. 
After recalling some notation in Section \ref{sec:notation} we describe, in Section \ref{sec:02}, the closed extensions 
of an elliptic cone differential operator $A$ as in \eqref{eq:intro01} and a  relation between the closed extensions of 
$A$ and those of the so-called \textit{model cone operator} $\wh{A}$ associated with $A$, 
\begin{equation}\label{eq:intro03}
 \wh{A}=t^{-\mu}\sum_{k=0}^\mu a_k(0)(-t\partial_t)^k,
\end{equation}
which is a differential operator on $X^\wedge:=(0,+\infty)\times X$. 
This relation was first introduced in \cite{GKM}; we provide here an alternative, equivalent description. 
In Section \ref{sec:03} we explain the concept 
of parameter-ellipticity and show that the resolvent is an element of the cone calculus with parameters, 
see Theorem \ref{thm:resolvent}. 
This improves the results in \cite{SchrSe}, where the coefficients $a_k$ of $A$ were required to be independent 
of $t$ for small $t$ and the domain was assumed to be dilation invariant.  
In Section \ref{sec:04} we prove resolvent estimates in all Sobolev spaces 
$\calH_p^{s,\gamma}(\bz)$ of order $s\ge0$ and the existence of the bounded $H_\infty$-calculus. 
We rely on techniques developed in \cite{RoidosSchr};
note, however, that in \cite[Section 3]{RoidosSchr} detailed knowledge about the structure of the resolvent in 
the spirit of Theorem \ref{thm:resolvent} was assumed while we find here simple conditions which guarantee 
precisely this. 
In Section \ref{sec:applications} we discuss the closed extensions of the warped Laplacian and  the porous medium 
equation, establishing in Theorem \ref{thm:solution} the existence of a unique short-time solution. 

\section{Some basic notation}\label{sec:notation}

Throughout the paper we write $\spk{y}=(1+|y|^2)^{1/2}$ for $y\in \rz^n$. 
\subsection{Function spaces}

Let $\bz$ as described in the introduction, $\gamma,\rho\in\rz$ and $p\in(1,+\infty)$. 
Write $X^\wedge=(0,+\infty)\times X$ with $X=\partial\bz$ and local coordinates $(t,x)$.

\subsubsection{Function spaces on the infinite cone}

For $s\in\nz_0$ let $\calH^{s,\gamma}_p(X^\wedge)$ denote the space of all $u\in H^s_{p,\mathrm{loc}}(X^\wedge)$ 
such that 
 $$t^{\frac{n+1}{2}-\gamma}(t\partial_t)^k \partial^\alpha_x u(t,x)\in L^p\Big(X^\wedge,\frac{dt}{t}dx\Big),
     \qquad k+|\alpha|\le s,$$
where $n$ is the dimension of $X$. 
These are Banach spaces in a natural way. By interpolation and duality one can also extend the definition to $s\in\rz$.

Let $U_1,\ldots,U_N$ be a covering of $X$ together with coordinate maps $\kappa_j:U_j\to V_j\subset\rz^n$. 
On $\rz\times X$  consider the coordinate maps $\wh\kappa_j:\wh{U}_j:=\rz\times U_j\to \rz^{n+1}$ given by 
$\wh\kappa_j(t,x)=\big(t,\spk{t}\kappa_j(x)\big)$.
We can define Sobolev spaces $H^{s,\rho}_p(\rz\times X)_{cone}$ 
on $\rz\times X$ in the standard way: Taking a partition of unity $\phi_1,\ldots,\phi_N$ on $X$ subordinate to the 
covering $U_1,\ldots,U_N$ we ask that $(\phi_ju)\circ\wh{\kappa}_j^{-1}$ belongs to 
$H^{s,\rho}_p(\rz^{n+1})=\spk{\cdot}^{-\rho}H^{s}_p(\rz^{n+1})$.  

The Banach space $\calK^{s,\gamma}_p(X^\wedge)^\rho$ consists of all $u$ such that, with an arbitrary cut-off function 
$\omega(t)\in\scrC^\infty_{\mathrm{comp}}([0,+\infty))$\footnote{i.e., $\omega\equiv1$ near $t=0$ and $\omega$ 
has compact support}, 
 $$\omega u\in \calH^{s,\gamma}_p(X^\wedge),\qquad (1-\omega)u\in H^{s,\rho}_p(\rz\times X)_{cone}.$$
For convenience we write 
$\calK^{s,\gamma}_p(X^\wedge):=\calK^{s,\gamma}_p(X^\wedge)^0$. 
Also we introduce 
\begin{align*}
 \scrS^{\gamma}_0(X^\wedge)&=\mathop{\mbox{\Large$\cap$}}_{\substack{s,\rho\ge0\\ k\in\nz_0}}
     (\log t)^{-k}\calK^{s,\gamma}_p(X^\wedge)^\rho,\\
 \scrS^{\gamma}_\eps(X^\wedge)&=\mathop{\mbox{\Large$\cap$}}_{\substack{s,\rho\ge0\\ 0\le\delta<\eps}}
     \calK^{s,\gamma+\delta}_p(X^\wedge)^\rho\qquad(\eps>0).
\end{align*}
These are Fr\' echet spaces and the definition is independent of the choice of $p$.
Note that $(1-\omega) u\in\scrS(\rz,\scrC^\infty(X))$ is rapidly decreasing for $t\to+\infty$ whenever 
$u\in\scrS^{\gamma}_\eps(X^\wedge)$ with $\eps\ge0$.  

\subsubsection{Function spaces on $\bz$}\label{sec:2.1.2}
Using the above spaces on $X^\wedge$ and a cut-off function $\omega\in\scrC^\infty_{\mathrm{comp}}([0,1))$,  
we introduce the Banach spaces $\calH^{s,\gamma}_p(\bz)$ by requiring that  
 $$\omega u\in{\calK^{s,\gamma}_p(X^\wedge)},\qquad (1-\omega)u\in{H^s_{p}(\bz)},$$
while $\scrC^{\infty,\gamma}_\eps(\bz)$, $\eps\ge0$, is defined by the requirement 
 $$\omega u\in{\scrS^{\gamma}_\eps(X^\wedge)},\qquad (1-\omega)u\in{\scrC^\infty(\bz)};$$
here, $1-\omega$ is considered as a smooth function on $\bz$ supported away from the boundary.

\subsection{Fr\' echet space valued symbols}

Let $E$ be a Fr\' echet space and $\Sigma$  a sector in $\rz^2$. We write $S^\mu(\Sigma,E)$ 
for the space of all smooth functions $a:\mathrm{int}\,\Sigma\to E$ such that 
for every multi-index $\alpha\in\nz_0^2$ and every continuous semi-norm $p$ on $E$
 $$p\big(D^\alpha_\eta a(\eta)\big)\le C_{\alpha,p}\spk{\eta}^{\mu-|\alpha|}, \quad \eta\in \Sigma.$$

\subsection{Twisted operator-valued symbols}\label{sec:twisted}

We define a group $\kappa_s$, $s>0$, of operators on $\scrC^\infty_{\mathrm{comp}}(X^\wedge)$ by 
 $$(\kappa_su)(t,x)=s^{-(n+1)/2}u(st,x).$$
They extend to unitary operators on $L^2(X^\wedge,t^ndtdx)$ and to continuous operators on $\scrD^\prime(X^\wedge)$.

Given two Banach spaces  $E,F\subset\scrD^\prime(X^\wedge)$ 
which are invariant under every $\kappa_s$, $s>0$, we denote by $S^\mu(\Sigma;E,F)$ the space of all 
smooth functions $a:\mathrm{int}\,\Sigma\to\scrL(E,F)$ such that 
 for every multi-index $\alpha\in\nz_0^2$
 $$\|\kappa_{\spk{\eta}}^{-1}\big(D^\alpha_\eta a(\eta)\big)\kappa_{\spk{\eta}}\|_{\scrL(E,F)}\le     
C_\alpha\spk{\eta}^{\mu-|\alpha|}, \quad \eta\in \Sigma.$$

If $F=\mathop{\mbox{\Large$\cap$}}_{j\in\nz}F_j$ is a projective limit of Banach spaces $F_j\subset\scrD^\prime(X^\wedge)$ which are all 
invariant under $\kappa_s$, $s>0$, we set 
$S^\mu(\Sigma;E,F)=\mathop{\mbox{\Large$\cap$}}_{j\in\nz}S^\mu(\Sigma;E,F_j)$. 
It carries a natural Fr\' echet topology. 

\subsection{Parameter-dependent cone pseudodifferential operators}

Throughout the paper we shall make use of Schulze's calculi for $($parameter-dependent$)$ pseudodifferential operators 
on $\bz$. We will use various subclasses of these calculi like $C^\mu(\bz;(\gamma,\gamma-\mu,(-N,0]))$ and 
$C^\mu_{O}(\Sigma)$. For a concise presentation of the parameter-dependent classes we refer the reader to the 
appendix of \cite{SchrSe}.

\section{Closed extensions of cone differential operators}\label{sec:02}

Being a differential operator on the interior of $\bz$, we can associate with $A$ its 
\textit{homogeneous principal symbol} 
$\sigma_\psi^\mu(A)\in\scrC^\infty(T^*\mathrm{int}\,\bz\setminus0)$. 
The limit 
  $$\wt{\sigma}_\psi^\mu(A)(x,\xi,\tau):=\lim_{t\to 0}t^\mu\sigma_\psi^\mu(A)(t,x,t^{-1}\tau,\xi)$$
defines the so-called \textit{rescaled principal symbol} 
$\wt{\sigma}_\psi^\mu(A)\in\scrC^\infty((T^*X\times\rz)\setminus0)$. 

\begin{definition}
$A$ is called cone-elliptic if both its homogeneous principal symbol and its rescaled principal symbol are invertible. 
\end{definition}

From now on we shall assume that $A$ is cone-elliptic. A second symbol of importance is the 
so-called conormal symbol of $A$, 
 $$\sigma_M^\mu(A)(z)=\sum_{k=0}^\mu a_k(0)z^k,$$
which is a polynomial whose coefficients are differential operators on $X$. 
The cone-ellipticity of $A$ implies that 
$\sigma_M^\mu(A)$ is meromorphically invertible, i.e.,  
$\sigma_M^\mu(A)^{-1}$ is meromorphic with values in the pseudodifferential 
operators of order $-\mu$ on $X$. Moreover, any vertical strip in the complex plane of finite width contains only 
finitely many poles of $\sigma_M^\mu(A)^{-1}$ and the Laurent coefficients are smoothing pseudodifferential operators on $X$ 
of finite rank. 

\subsection{Closed extensions}\label{sec:02.1}

The analysis of the closed extensions of cone differential operators has a long history, see for example 
the works \cite{Lesc}, \cite{GiMe}, \cite{GKM2} and \cite{SchrSe}. Here we summarize some results. 

$A$ considered as the unbounded operator \eqref{eq:intro02} is closable 
and has two canonical closed extensions: 
\begin{itemize}
 \item[i$)$] The \textit{closure}  
 $A_{\min}$, which is given by the action of $A$ on the domain 
  \begin{equation*}
    \scrD_{\min}(A)=\Big\{
    u\in\mathop{\mbox{\LARGE$\cap$}}_{\eps>0}\calH^{s+\mu,\gamma+\mu-\eps}_p(\bz)\mid 
    Au\in\calH^{s,\gamma}_p(\bz)
    \Big\}.
\end{equation*}
In case the conormal symbol of $A$ is invertible $($as a pseudodifferential operator on $X)$ for every $z$ 
with $\re z=\frac{n+1}{2}-\gamma-\mu$, the minimal domain coincides with $\calH^{s+\mu,\gamma+\mu}_p(\bz)$. 
 \item[ii$)$] The \textit{maximal extension} $A_{\max}$, given by the action of $A$ 
  on the domain 
   $$\scrD_{\max}(A)=\big\{u\in\calH^{s,\gamma}_p(\bz)\mid Au\in\calH^{s,\gamma}_p(\bz)\big\}.$$
\end{itemize}
It should be clear from the context to which particular choice of $s$, $p$ and $\gamma$ we refer.  

Similarly, we consider  the model cone operator $\widehat A$ associated with $A$ as an unbounded operator
\begin{equation}\label{eq:ext01.5}
 \wh{A}:\scrC^{\infty}_\comp(X^\wedge)\subset\calK^{s,\gamma}_p(X^\wedge)^\rho\longrightarrow 
 \calK^{s,\gamma}_p(X^\wedge)^\rho  
\end{equation}
where $\rho$ ranges over $\rz$ $($again, the choice of parameters will not be specified in the notation). 
In case $\wh{A}$ satisfies a suitable ellipticity condition on $X^\wedge$ near 
$t=+\infty$ $($which is satisfied, for example, if $A$ satisfies condition $($E$1)$ in Section \ref{sec:03}$)$, 
minimal and maximal extension, denoted by $\wh{A}_{\min}$ and 
$\wh{A}_{\max}$, respectively, are as above, substituting the Sobolev spaces $\calH^{\cdot,\cdot}_p(\bz)$ 
by $\calK^{\cdot,\cdot}_p(X^\wedge)^\rho$. 

It turns out that the gap between minimal and maximal domain is finite-dimensional.
In the following let $\omega\in\scrC_{\mathrm{comp}}^\infty([0,1))$ denote a cut-off function.

\begin{proposition}\label{lem:ext01}
There exist subspaces $\scrE,\wh{\scrE}\subset \scrC^\infty(X^\wedge)$ 
$($which do not depend on the choice of $s$, $p$ and $\rho)$, both finite-dimensional and 
of same dimension, such that 
 $$\scrD_{\max}(A)=\scrD_{\min}(A)\oplus\omega\scrE,\qquad 
     \scrD_{\max}(\wh{A})=\scrD_{\min}(\wh{A})\oplus\omega\wh{\scrE}.$$
\end{proposition}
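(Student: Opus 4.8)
The plan is to reduce the statement to a local, model-cone computation near $t=0$, since both domains differ from the respective Sobolev spaces only through the behaviour of functions at the conical tip. First I would localize: choosing a cut-off $\omega$, I would observe that $(1-\omega)u$ contributes to $\scrD_{\min}$ and $\scrD_{\max}$ in the same way (it lives on the smooth part of $\bz$, resp. near $t=+\infty$ on $X^\wedge$, where $A$ and $\wh A$ are honestly elliptic by cone-ellipticity, resp. the ellipticity assumption near infinity, and there elliptic regularity forces minimal $=$ maximal). Hence the quotient $\scrD_{\max}(A)/\scrD_{\min}(A)$ is carried entirely by $\omega u$, and likewise for $\wh A$. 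So it suffices to analyze, for $u$ supported in $[0,1)\times X$, the solvability and regularity of $Au = f \in \calH^{s,\gamma}_p$ modulo $\calH^{s+\mu,\gamma+\mu-\eps}_p$ for all $\eps$.

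The core is a Mellin-analysis argument in the variable $t$. Writing $A = t^{-\mu}\sum_k a_k(t)(-t\partial_t)^k$ and freezing coefficients at $t=0$ gives $\wh A = t^{-\mu}\sum_k a_k(0)(-t\partial_t)^k$, whose conormal symbol is $\sigma_M^\mu(A)(z)=\sum_k a_k(0)z^k$. By the cone-ellipticity hypothesis recalled above, $\sigma_M^\mu(A)^{-1}$ is meromorphic with only finitely many poles in any vertical strip of finite width, and the Laurent coefficients at each pole are finite-rank smoothing operators on $X$. The step is then: $u\in\scrD_{\max}(A)$ (localized) admits a partial asymptotic expansion near $t=0$,
\[
u(t,x) \sim \sum_{p,\,-\frac{n+1}{2}+\gamma < \re p \le -\frac{n+1}{2}+\gamma+\mu}\ \sum_{\ell=0}^{m_p} c_{p,\ell}(x)\, t^{-p}(\log t)^\ell,
\]
with remainder in $\bigcap_\eps \calH^{s+\mu,\gamma+\mu-\eps}_p$, where the exponents $p$ and orders $\ell$ run over the poles of $\sigma_M^\mu(A)^{-1}$ in the relevant strip and their multiplicities, and the $c_{p,\ell}$ lie in the (finite-dimensional) range of the corresponding Laurent coefficients. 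This is a standard consequence of Mellin quantization and the meromorphic Mellin symbolic calculus — one applies a Mellin parametrix, reads off the residues, and uses that the coefficient freezing $a_k(t)-a_k(0)=O(t)$ shifts the remainder by one unit of weight, which is absorbed into the $\eps$. Defining $\scrE$ (resp. $\scrE_{\wh A}$) to be the span of the functions $\psi(t)\,t^{-p}(\log t)^\ell\, c_{p,\ell}(x)$ over this finite pole set, with $\psi$ a fixed cut-off and $c_{p,\ell}$ ranging over the finite-dimensional Laurent ranges, one gets $\scrD_{\max}(A)=\scrD_{\min}(A)+\omega\scrE$; the sum is direct because a nonzero such asymptotic term cannot lie in $\scrD_{\min}(A)\subset\bigcap_\eps\calH^{s+\mu,\gamma+\mu-\eps}_p$, the $t$-power being too singular. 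Since $\wh A$ has the \emph{same} coefficients $a_k(0)$, the same conormal symbol, hence the same pole set and Laurent data, $\scrE$ and $\scrE_{\wh A}$ are built from identical germs at $t=0$; one takes $\wh\scrE$ to be the natural identification of $\scrE$ as a subspace of $\scrC^\infty(X^\wedge)$, so $\dim\scrE=\dim\wh\scrE$. Independence of $s,p,\rho$ is immediate because the pole set and Laurent coefficients of $\sigma_M^\mu(A)^{-1}$ depend only on $A$ (and the weight line, which is fixed by $\gamma,\mu$, not by $s,p,\rho$).

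The main obstacle is making the asymptotic-expansion step rigorous without simply invoking the Mellin calculus as a black box: one must control that the Mellin parametrix of $A$, which is only a \emph{local} (near $t=0$) construction and must handle the $t$-dependence of the $a_k$, produces an honest finite asymptotic type and a genuine $\calH^{s+\mu,\gamma+\mu-\eps}_p$ remainder, with the error from variable coefficients and from the smoothing part of the parametrix not reintroducing spurious exponents. Concretely, the delicate points are (i) the iteration that peels off successive asymptotic terms terminates after finitely many steps precisely because the strip $-\frac{n+1}{2}+\gamma<\re z\le -\frac{n+1}{2}+\gamma+\mu$ contains only finitely many poles, and (ii) the remainder estimate is uniform in the $\eps$-scale, i.e. each peeled term genuinely gains a full unit of weight minus an arbitrarily small loss. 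Both are by now classical in the cone pseudodifferential literature (cf.\ \cite{Lesc}, \cite{GiMe}, \cite{GKM2}), so in the write-up I would state the expansion as the key lemma, cite the relevant parametrix construction, and then derive the direct-sum decompositions and the dimension equality as above.
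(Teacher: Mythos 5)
Your overall route --- Mellin asymptotics for maximal-domain elements governed by the poles of the inverted conormal symbol, with remainder in the minimal domain --- is the classical one and is indeed what underlies the references the paper quotes; for the constant-coefficient model operator $\wh{A}$ it yields exactly the right answer (Theorem \ref{thm:max_domain1}). The genuine gap is in how you treat the variable-coefficient operator $A$ itself: you define $\scrE$ as the span of the pure singular functions $\omega\,t^{-\sigma}\log^k t\,c(x)$, with $\sigma$ a pole of $f_0^{-1}=\sigma_M^\mu(A)^{-1}$ in the critical strip and $c$ in the Laurent ranges, and then assert $\scrD_{\max}(A)=\scrD_{\min}(A)\oplus\omega\scrE$ and that $\wh{\scrE}$ may be taken as ``the natural identification of $\scrE$''. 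This fails in general. Writing $A=t^{-\mu}\sum_j t^jf_j(-t\partial_t)$ with the conormal symbols \eqref{eq:conormal}, and taking a simple pole $\sigma$ with $e$ in the range of the residue of $f_0^{-1}$, one finds $A(\omega t^{-\sigma}e)=\omega\,t^{-\mu+1-\sigma}f_1(\sigma)e$ plus less singular terms; as soon as $\re\sigma\ge\frac{n+1}{2}-\gamma-\mu+1$ (equivalently $\mu_\sigma\ge1$ in \eqref{eq:gsigma}) and $f_1(\sigma)e\neq0$, this is not in $\calH^{s,\gamma}_p(\bz)$. Hence $\omega t^{-\sigma}e\notin\scrD_{\max}(A)$, so $\omega\scrE\not\subset\scrD_{\max}(A)$ and your direct-sum statement is false with this $\scrE$. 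What is true is that maximal-domain elements carry \emph{linked} asymptotic packets: a leading term $t^{-\sigma}$ together with correction terms $t^{-\sigma+\ell}$, $1\le\ell\le\mu_\sigma$, whose coefficients are produced from the higher conormal symbols $f_\ell$ by the recursion \eqref{eq:recursion}. These are precisely the generators $G_\sigma=\sum_{\ell\le\mu_\sigma}G^{(\ell)}_\sigma$ of Theorem \ref{thm:theta}, and Proposition \ref{prop:max} is exactly the non-trivial verification, via the identity \eqref{eq:recursion2}, that such linked functions do belong to $\scrD_{\max}(A)$.

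The same oversight breaks your dimension argument: $\scrE$ and $\wh{\scrE}$ are \emph{not} built from identical germs in general --- see the example in Section \ref{subsec:example}, where $\scrE_\sigma=\{t^{-\sigma}e+t^{-\sigma+1}(\beta^0_\sigma e+\beta_\sigma e\log t)\mid e\in E_\sigma\}$ differs from $\wh{\scrE}_\sigma=\{t^{-\sigma}e\mid e\in E_\sigma\}$ and may even contain logarithms although all poles of $f_0^{-1}$ are simple. The equality $\dim\scrE=\dim\wh{\scrE}$ is true but needs an argument, namely that $G_\sigma(u)\mapsto G^{(0)}_\sigma(u)$ is well defined and injective (Proposition \ref{prop:basis}), which is where the correspondence $\theta_\sigma$ comes from. (A minor point: your critical strip has the signs reversed; for terms $t^{-\sigma}$ the correct condition is $\frac{n+1}{2}-\gamma-\mu<\re\sigma<\frac{n+1}{2}-\gamma$, as in \eqref{eq:sigma}.) In short, your proposal essentially establishes only the $\wh{A}$ half of the proposition; for $A$ the asymptotic-expansion lemma must produce the corrected, linked singular functions, and the comparison of dimensions must pass through something like Propositions \ref{prop:max} and \ref{prop:basis}.
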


The elements of both $\scrE$ and $\wh{\scrE}$ are finite linear combinations of functions of the form 
$c(x)t^{-p}\ln^kt$ with $c(x)\in\scrC^\infty(X)$, $p\in\cz$, and $k\in\nz_0$. For an explicit description 
see the following subsection.

Therefore, any closed extension $\ulA$ of $A$ is given by the action of $A$ on a 
domain of the form  
\begin{equation}\label{eq:ext01}
 \scrD(\ulA)=\scrD_{\min}(A)\oplus\omega\ulE,\qquad 
   \text{$\ulE$ subspace of $\scrE$}, 
\end{equation}
while any closed extension $\ulwhA$ of $\wh{A}$ is given 
by the action of $\wh{A}$ on a domain of the form  
\begin{equation}\label{eq:ext02}
 \scrD(\ulwhA)=\scrD_{\min}(\wh{A})\oplus\omega\ulwhE,\qquad 
   \text{$\ulwhE$ subspace of $\wh{\scrE}$}.  
\end{equation}

For later purpose we present the following result:

\begin{proposition}\label{prop:independent}
Assume that the conormal symbol of $A$ is invertible for every $z$ with $\re z=\frac{n+1}{2}-\gamma-\mu$. 
Consider 
 $$\ulA:\calH^{s+\mu,\gamma+\mu}_p(\bz)\oplus\omega\ulE\lra \calH^{s,\gamma}_p(\bz).$$
Then both Fredholm property and invertibility are independent on the choice of $s$ and $p$. 
Also the index does not depend on $s$ and $p$. 
\end{proposition}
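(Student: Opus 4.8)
The plan is to reduce the independence statement to the well‑known $s$‑ and $p$‑independence of Fredholm properties for the \emph{minimal} extension (equivalently, for the operator on $\calH^{s+\mu,\gamma+\mu}_p(\bz)$ when the conormal symbol is invertible on the critical line), plus the observation that passing from $\scrD_{\min}(A)$ to $\scrD_{\min}(A)\oplus\omega\ulE$ is a finite‑dimensional perturbation whose effect on index and invertibility is purely algebraic and therefore insensitive to $s$ and $p$. First I would recall that, under the stated hypothesis on the conormal symbol, $\scrD_{\min}(A)=\calH^{s+\mu,\gamma+\mu}_p(\bz)$, so $\ulA$ restricted to the minimal domain is a bounded map $\calH^{s+\mu,\gamma+\mu}_p(\bz)\to\calH^{s,\gamma}_p(\bz)$ which, by cone‑ellipticity, has a parametrix in Schulze's cone calculus modulo compact (indeed smoothing) remainders; this is standard and gives both the Fredholm property and the $s,p$‑independence of index for $A_{\min}$ itself.

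Next I would handle the finite‑dimensional extension. Write $\ulA_{\ulE}$ for $A$ acting on $\scrD_{\min}(A)\oplus\omega\ulE$ and $A_{\min}$ for $A$ on $\scrD_{\min}(A)$. Since $\omega\ulE$ is a fixed finite‑dimensional subspace of $\scrC^\infty(X^\wedge)$ that lies in $\calH^{s,\gamma}_p(\bz)$ for \emph{every} admissible $s,p$ (because its elements are explicit functions $c(x)t^{-p}\ln^k t$ with $c\in\scrC^\infty(X)$, truncated by $\omega$, hence in $\scrD_{\max}$ for all $s,p$), the map $\ulA_{\ulE}$ is a bounded finite‑rank extension of $A_{\min}$. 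Concretely, $\ker\ulA_{\ulE}$ and $\mathrm{coker}\,\ulA_{\ulE}$ differ from those of $A_{\min}$ by a controlled finite‑dimensional bookkeeping: one has the exact sequence relating $\ker$ and $\mathrm{coker}$ of $A_{\min}$, of $\ulA_{\ulE}$, and the finite‑dimensional datum $(\ulE, A(\omega\ulE))$ inside $\calH^{s,\gamma}_p(\bz)/\im A_{\min}$. Because both $\ulE$ and the function $A(\omega\ulE)$ are independent of $s$ and $p$, and because the image of this datum in the (finite‑dimensional, by ellipticity) cokernel of $A_{\min}$ is described by pairings against a fixed finite system of functionals — the "coefficients of the asymptotics" — the rank of this datum does not change with $s,p$. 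Hence $\dim\ker\ulA_{\ulE}$ and $\dim\mathrm{coker}\,\ulA_{\ulE}$ are each independent of $s,p$; in particular the Fredholm property, the index, and invertibility (which is just $\ker=\mathrm{coker}=0$) are independent of $s,p$.

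The cleanest way to make the last point rigorous is the density/interpolation argument: for $s'\ge s$ one has continuous dense embeddings $\calH^{s'+\mu,\gamma+\mu}_p(\bz)\hookrightarrow\calH^{s+\mu,\gamma+\mu}_p(\bz)$ and likewise on the target, compatible with $\ulA_{\ulE}$ and with the splitting off of $\omega\ulE$; by elliptic regularity in the cone calculus, any solution of $\ulA_{\ulE}u=f$ with $f\in\calH^{s',\gamma}_p$ and $u\in\scrD(\ulA_{\ulE})$ at level $s$ automatically lies at level $s'$ (the parametrix maps $\calH^{s',\gamma}_p$ into $\calH^{s'+\mu,\gamma+\mu}_p\oplus\omega\ulE$ modulo $\scrC^{\infty,\gamma}_0(\bz)$), so kernels coincide; dualizing, or applying the same regularity to the formal adjoint, the cokernels coincide. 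For $p$‑independence one argues similarly, using that the kernel consists of functions with a fixed asymptotic type lying in $\calH^{s+\mu,\gamma+\mu}_{p}(\bz)$ for all $p$ simultaneously, and that the parametrix and its remainders act continuously on the full scale; this is exactly the mechanism already exploited in \cite{GKM}, \cite{SchrSe}. I expect the main obstacle to be the precise cokernel comparison across different $p$: one must check that the finitely many functionals cutting out $\im A_{\min}$ can be chosen from $\scrC^{\infty,-\gamma'}_0(\bz)$‑type dual elements acting on \emph{all} the spaces $\calH^{s,\gamma}_p(\bz)$, so that the orthogonality conditions describing the cokernel — and therefore its interaction with the fixed subspace $A(\omega\ulE)$ — are literally the same conditions for every $p$. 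Granting this (it follows from the mapping properties of the cone calculus and the finite‑rank, smoothing nature of the remainders), the proof concludes.
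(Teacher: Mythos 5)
Your overall strategy largely parallels the paper's: under the hypothesis on the conormal symbol one has $\scrD_{\min}(A)=\calH^{s+\mu,\gamma+\mu}_p(\bz)$, ellipticity in the cone calculus gives the $s,p$-independence of the Fredholm property and of the index of $A$ on the minimal domain (the paper simply cites Corollary 3.5 of \cite{SchrSe2001} rather than redoing the parametrix argument), passing to $\calH^{s+\mu,\gamma+\mu}_p(\bz)\oplus\omega\ulE$ is a finite-dimensional modification that leaves Fredholmness untouched and shifts the index only by a quantity independent of $s$ and $p$, and elliptic regularity shows that any kernel element $u+e$ satisfies $Au=-Ae\in\scrC^{\infty,\gamma}_0(\bz)$, hence $u\in\scrC^{\infty,\gamma+\mu}_0(\bz)$, so the kernel is one and the same finite-dimensional space of smooth functions with asymptotics for every choice of $s$ and $p$. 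Up to this point you and the paper agree.

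You diverge at the final step, and this is where your write-up has a gap. You aim for the stronger assertion that $\dim\mathrm{coker}\,\ulA$ is itself independent of $s,p$, by describing the cokernel of $A_{\min}$ through a fixed finite system of functionals acting simultaneously on all spaces $\calH^{s,\gamma}_p(\bz)$ -- and you yourself flag, and then simply ``grant'', exactly the point that requires proof, namely that the annihilator of the range (solutions of $A^tv=0$ in the dual spaces $\calH^{-s,-\gamma}_{p'}(\bz)$) is the same space for all $p$. That claim is true, but it needs an argument: conormal regularity for $A^t$ together with the observation that the borderline asymptotic exponents for the dual weight $-\gamma$ correspond, under $z\mapsto n+1-\mu-\overline{z}$, to poles of $f_0^{-1}$ on the line $\re z=\frac{n+1}{2}-\gamma-\mu$, which the hypothesis excludes; none of this is supplied. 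The detour is also unnecessary, and this is how the paper avoids it: since the index is independent of $s$ and $p$, invertibility for one choice forces index $0$ for all choices, so it suffices to prove injectivity for every $(s,p)$ -- and injectivity is immediate from your own kernel-regularity argument, since the kernel is literally the same space for all $(s,p)$. Rearranged in this way, your proposal closes completely with the ingredients you already have and then coincides with the paper's proof.
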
 

The analogous result holds true for the extensions of the model cone operator $\wh{A}$, with indepndence on the involved 
parameters $s$, $p$, and $\rho$. 

\begin{proof}
Since $\ulE$ is finite-dimensional, $\ulA$ is a Fredholm operator if and only if 
$A:\calH^{s+\mu,\gamma+\mu}_p(\bz)\to\calH^{s,\gamma}_p(\bz)$ is a Fredholm operator. The index of both operators 
is the same, due to the stability of the index under compact perturbations. 
By assumption, $A$ is an elliptic element in the algebra of cone pseudodifferential operators 
$C^\mu(\bz;(\gamma+\mu,\gamma,(-N,0]))$ for arbitrary $N$. According to Corollary 3.5 of \cite{SchrSe2001}, 
both Fredholm property and index of $A$ do not depend on the choice of $s$ and $p$.  

Now assume that $\ulA$ is invertible for some fixed choice of the parameters. We just have seen that $\ulA$ 
is a Fredholm operator of index $0$ for every choice of $s$ and $p$. Thus it suffices to show that $\ulA$ is always 
injective. So let $u+e$ with $u\in\calH^{s+\mu,\gamma+\mu}_p(\bz)$ and $e\in\omega\ulE$ belong to the kernel. Then 
$Au=-Ae\in\scrC^{\infty,\gamma}_0(\bz)$. By elliptic regularity in the cone algebra we conclude that 
$u\in\scrC^{\infty,\mu+\gamma}_0(\bz)$. This shows that the kernel of $\ulA$ does not depend on $s$ and $p$, 
and neither does the injectivity. 
\end{proof}

\subsection{One-to-one correspondence between closed extensions}\label{sec:02.2}

There exists a certain one-to-one correspondence between the closed extensions of $A$ and those of its 
model cone operator which plays a fundamental role in the theory of parameter-ellipticity of closed extensions. 

With $A$ we associate the sequence of conormal symbols
\begin{equation}\label{eq:conormal}
 f_\ell(z)=\sum_{j=0}^\mu a_j^{(\ell)}z^j,\qquad 
 a_j^{(\ell)}:=\frac{1}{\ell!}\frac{d^\ell a_j}{dt^\ell}(0);
\end{equation}
in particular, $f_0 =\sigma_M^\mu(A)$. Below, we will use the following notation: 
\begin{equation}\label{eq:sigma}
  S_\gamma=\Big\{\sigma\in\cz\mid \sigma\text{ is a pole of $f_0^{-1}$ and } 
  \frac{n+1}{2}-\gamma-\mu<\re\sigma<\frac{n+1}{2}-\gamma\Big\}.
\end{equation}
We shall identify $\scrC^\infty(X^\wedge)$ with $\scrC^\infty((0,+\infty),\scrC^\infty(X))$ and 
use the Mellin transform 
 $$\wh{u}(z)=\int_0^\infty t^{z} u(t)\frac{dt}{t},\qquad  u\in\scrC^\infty_\comp(X^\wedge).$$

The following theorem describes the space $\wh{\scrE}$ associated with the maximal extension of the 
model cone operator. 

\begin{theorem}\label{thm:max_domain1}
For $\sigma\in S_\gamma$ define 
$G_\sigma^{(0)}:\scrC^\infty_\comp(X^\wedge)\to\scrC^\infty(X^\wedge)$ by 
 $$(G_\sigma^{(0)} u)(t)=
      \int_{|z-\sigma|=\eps} t^{-z}f_0^{-1}(z)\wh{u}(z)\,\dbar z,$$
where $\eps>0$ is sufficently small.
Then 
 $$\wh\scrE=\mathop{\mbox{\Large$\oplus$}}_{\sigma\in S_\gamma} \wh\scrE_\sigma,\qquad 
     \wh\scrE_\sigma=\mathrm{range}\,G_\sigma^{(0)}.$$
\end{theorem}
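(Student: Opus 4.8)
The plan is to identify, for the model cone operator $\wh A$, the elements of $\scrD_{\max}(\wh A)$ modulo $\scrD_{\min}(\wh A)$ via the Mellin transform. First I would recall that on $X^\wedge$ the model operator is Mellin-homogeneous: $\wh A = t^{-\mu}\,\op_M(\sigma_M^\mu(A))$, where $\op_M$ denotes the Mellin quantization and $\sigma_M^\mu(A)(z)=f_0(z)$. Thus on the Mellin side $\wh A$ acts as multiplication by $f_0(z)$ after the shift $z\mapsto z-\mu$ induced by $t^{-\mu}$, and the point is that $\scrD_{\min}(\wh A)$ consists (in the appropriate weight) of those $u$ whose Mellin transform is holomorphic in the strip $\frac{n+1}{2}-\gamma-\mu<\re z<\frac{n+1}{2}-\gamma$, while for $u\in\scrD_{\max}(\wh A)$ one only knows that $f_0(z-\mu)\wh u(z-\mu)$, i.e. the Mellin transform of $t^\mu\wh A u$, extends holomorphically there. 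Therefore the obstruction to $u$ lying in the minimal domain is exactly the finite set of poles of $f_0^{-1}$ lying in the strip $\frac{n+1}{2}-\gamma-\mu<\re\sigma<\frac{n+1}{2}-\gamma$, that is the set $S_\gamma$.

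Next I would make this precise. Fix $u\in\scrD_{\max}(\wh A)$. Since $\wh A u\in\calK^{s,\gamma}_p(X^\wedge)^\rho$, a Mellin–Sobolev embedding near $t=0$ lets one represent $\omega u$ (with $\omega$ a cut-off) as an inverse Mellin integral $u(t)=\int_{\re z=\beta} t^{-z}\wh u(z)\,\dbar z$ along the line $\re z=\beta:=\frac{n+1}{2}-\gamma$, and $f_0(z)^{-1}$ times the Mellin transform of $t^\mu\wh A u$ gives a meromorphic continuation of $\wh u$ across the strip down to the line $\re z=\frac{n+1}{2}-\gamma-\mu$, with poles contained in $S_\gamma$. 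Shifting the contour from $\re z=\beta$ to a line just to the left of the strip and picking up residues produces the decomposition
$$\omega u = \sum_{\sigma\in S_\gamma}\omega\,G_\sigma^{(0)}u_\sigma \;+\; v,$$
where each residue term is of the asserted form $c(x)t^{-\sigma}\ln^k t$ — indeed $G_\sigma^{(0)}u$ is precisely the residue of $t^{-z}f_0^{-1}(z)\wh u(z)$ at $z=\sigma$, a finite sum of such functions because the Laurent coefficients of $f_0^{-1}$ are finite-rank smoothing operators on $X$ — and $v$ has Mellin transform holomorphic across the full strip, hence lies in $\bigcap_{\eps>0}\calK^{s,\gamma+\mu-\eps}_p(X^\wedge)^\rho$; since $\wh A v=\wh A u-\sum\wh A(\omega G_\sigma^{(0)}u_\sigma)\in\calK^{s,\gamma}_p$, this forces $v\in\scrD_{\min}(\wh A)$. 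Conversely, each $\omega G_\sigma^{(0)}u$ lies in $\scrD_{\max}(\wh A)$: applying $\wh A$ kills the residue up to a smoothing remainder supported away from $t=0$, because $f_0(z)\cdot f_0^{-1}(z)=1$ makes the Mellin symbol of $t^\mu\wh A(G^{(0)}_\sigma u)$ holomorphic at $\sigma$. This shows $\scrD_{\max}(\wh A)=\scrD_{\min}(\wh A)+\sum_{\sigma\in S_\gamma}\omega\,\mathrm{range}\,G^{(0)}_\sigma$.

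To finish I would verify that the sum is direct and that $\wh\scrE_\sigma:=\mathrm{range}\,G^{(0)}_\sigma$ are finite-dimensional and mutually independent. Directness with $\scrD_{\min}(\wh A)$ follows since a nonzero element of $\sum_\sigma\omega\wh\scrE_\sigma$ has a genuine singularity $t^{-\sigma}\ln^k t$ at $t=0$ with $\re\sigma$ strictly inside the strip, hence cannot lie in $\bigcap_{\eps>0}\calK^{s,\gamma+\mu-\eps}_p$; directness among the $\wh\scrE_\sigma$ follows from the distinct exponents $-\sigma$ (for distinct $\sigma$ the functions $t^{-\sigma}\ln^k t$ are linearly independent). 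Finite-dimensionality of each $\wh\scrE_\sigma$ is a consequence of the finite order of the pole of $f_0^{-1}$ at $\sigma$ together with the finite rank of its Laurent coefficients, which is part of the meromorphic invertibility statement already recalled before Section \ref{sec:02.1}. The main obstacle I anticipate is the careful bookkeeping of the contour shift and the Mellin–Sobolev mapping properties: one must justify that $\wh u$ extends meromorphically exactly up to the left edge of the strip with no poles on the boundary lines (which is why $S_\gamma$ uses strict inequalities and matches the weight convention $\frac{n+1}{2}-\gamma$), and that the residue functions $G^{(0)}_\sigma u$, after multiplication by the cut-off $\omega$, indeed land in the maximal domain rather than merely in $\scrC^\infty(X^\wedge)$ — this is where the identity $f_0 f_0^{-1}=1$ and the structure of $\wh A$ as a Mellin operator with the exact symbol $f_0$ are used essentially, and it is precisely here that passing from $A$ to the \emph{model} operator $\wh A$ (whose conormal symbol is exactly $f_0$, with no lower-order Mellin corrections) simplifies matters.
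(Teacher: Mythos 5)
Your argument is correct and is essentially the standard proof of this statement: the paper itself states Theorem \ref{thm:max_domain1} without proof, as part of its summary of known results from \cite{Lesc}, \cite{GiMe}, \cite{GKM2}, \cite{SchrSe}, and those sources establish it by exactly your route --- meromorphic continuation of $\wh{u}$ across the strip via $f_0^{-1}$, a contour shift picking up the residues at the poles in $S_\gamma$, the identity $f_0f_0^{-1}=1$ to place $\omega\,\mathrm{range}\,G^{(0)}_\sigma$ in $\scrD_{\max}(\wh A)$, and the finite-rank smoothing Laurent coefficients for finite-dimensionality. Only minor bookkeeping needs tightening (e.g.\ $f_0(z)\wh u(z)$ is the Mellin transform of $t^\mu\wh A u$ while $f_0(z-\mu)\wh u(z-\mu)$ is that of $\wh A u$; possible poles of $f_0^{-1}$ on the boundary lines need not be excluded, since shifting to lines $\re z=\tfrac{n+1}{2}-\gamma-\mu+\delta$ for all small $\delta>0$, together with interior/conormal elliptic regularity to upgrade the remainder to $\mathop{\mbox{\Large$\cap$}}_{\eps>0}\calK^{s+\mu,\gamma+\mu-\eps}_p$, is all that the paper's description of $\scrD_{\min}$ requires), and none of this affects the validity of the approach.
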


The characterization of $\scrE$ is more involved. We follow here the approach of \cite{SchrSe}; 
other descriptions can be found in \cite{GiMe}, \cite{GKM}. 
  
Define recursively 
\begin{equation}\label{eq:recursion}
 g_0=1,\qquad
 g_\ell=-(T^{-\ell}f^{-1}_0)\sum_{j=0}^{\ell-1}(T^{-j}f_{\ell-j})g_j,\qquad \ell\in\nz,
\end{equation}
where the shift-operators $T^\rho$, $\rho\in\rz$, act on meromorphic functions by $(T^\rho f)(z)=f(z+\rho)$. 
The $g_\ell$ are meromorphic and the recursion is equivalent to 
\begin{equation}\label{eq:recursion2}
 \sum_{j=0}^\ell(T^{-j}f_{\ell-j}){g}_j
 =\begin{cases}f_0&\quad: \ell=0\\ 0&\quad: \ell\ge 1\end{cases}.
\end{equation}
If $h$ is a meromorphic function, denote by $\Pi_\sigma h$ the principal part of the 
Laurent series in $\sigma$; of course, if $h$ is holomorphic in $\sigma$, then $\Pi_\sigma h=0$. 

\begin{theorem}\label{thm:theta}
For $\sigma\in S_\gamma$ and $\ell\in\nz$ define  
$G_\sigma^{(\ell)}:\scrC^\infty_\comp(X^\wedge)\to\scrC^{\infty}(X^\wedge)$ by 
\begin{equation}\label{eq:sigma0}
 (G_\sigma^{(\ell)} u)(t)=
  t^\ell\,\int_{|z-\sigma|=\eps} t^{-z}g_\ell(z)\,\Pi_\sigma(f_0^{-1}\,\wh{u})(z)\,\dbar z,
\end{equation}
as well as 
\begin{equation}\label{eq:gsigma}
 G_\sigma:=\sum_{\ell=0}^{\mu_\sigma}G_\sigma^{(\ell)},\qquad
 \mu_\sigma:=\Big[\re\sigma+\mu+\gamma-\frac{n+1}{2}\Big],
\end{equation}
where $[x]$ denotes the integer part of $x\in\rz$. Then 
 $$\scrE=\mathop{\mbox{\Large$\oplus$}}_{\sigma\in S_\gamma} \scrE_\sigma,
   \qquad \scrE_\sigma=\mathrm{range}\,G_\sigma.$$
Moreover, the following map is well-defined and an isomorphism$:$
\begin{equation}\label{eq:isom1}
  \theta_\sigma:\scrE_\sigma\longrightarrow\wh{\scrE}_\sigma,\quad 
   G_\sigma(u)\mapsto G^{(0)}_\sigma(u).
\end{equation}
\end{theorem}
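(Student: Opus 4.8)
\textbf{Proof plan for Theorem \ref{thm:theta}.}
The plan is to prove the two assertions separately. For the direct-sum decomposition $\scrE=\bigoplus_{\sigma\in S_\gamma}\scrE_\sigma$ with $\scrE_\sigma=\mathrm{range}\,G_\sigma$, I would start from the known description of $\scrD_{\min}(A)$ and $\scrD_{\max}(A)$ via Mellin asymptotics: an element $u\in\scrD_{\max}(A)$ with $Au=f\in\calH^{s,\gamma}_p$ has, modulo $\scrD_{\min}(A)$, an asymptotic expansion whose exponents are controlled by the poles of $f_0^{-1}$ in the strip $\frac{n+1}{2}-\gamma-\mu<\re z<\frac{n+1}{2}-\gamma$, i.e. by $S_\gamma$. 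First I would fix $\sigma\in S_\gamma$ and verify that each $G_\sigma^{(\ell)}u$ lands in $\scrC^\infty(X^\wedge)$ and has the right growth at $t\to 0$ and $t\to+\infty$, so that $\omega G_\sigma u\in\scrD_{\max}(A)$; the cut-off at $t=+\infty$ is harmless since these are finite linear combinations of $c(x)t^{-z}\ln^k t$. Next I would check that the recursion \eqref{eq:recursion2} is exactly the condition that makes $A(\omega G_\sigma u)\in\scrC^{\infty,\gamma}_0(\bz)\subset\scrD_{\min}$: writing $A=t^{-\mu}\sum a_k(t)(-t\partial_t)^k$ and Taylor-expanding $a_k(t)=\sum_\ell a_k^{(\ell)}t^\ell$, applying $A$ to $t^\ell\int t^{-z}g_\ell(z)(\cdots)\dbar z$ produces, after collecting powers of $t$, precisely the sums $\sum_{j\le\ell}(T^{-j}f_{\ell-j})g_j$, which vanish for $\ell\ge 1$ and give back the $\ell=0$ contribution. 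The truncation at $\ell=\mu_\sigma$ is dictated by which correction terms still fall outside $\scrD_{\min}$, i.e. by the weight threshold $\frac{n+1}{2}-\gamma-\mu$.

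For the counting and directness: the images $\scrE_\sigma$ for distinct $\sigma$ consist of functions whose leading Mellin exponents lie in disjoint residue classes (distinct values of $\re z$ among poles of $f_0^{-1}$, or the same real part but forced linear independence by the $\Pi_\sigma$ projections), so the sum is direct; that the full $\scrE$ is exhausted follows because Theorem \ref{thm:max_domain1} already pins down $\dim\wh\scrE=\sum_{\sigma}\dim\wh\scrE_\sigma$, and the general finite-dimensionality statement (Proposition \ref{lem:ext01}) gives $\dim\scrE=\dim\wh\scrE$; once I show $\theta_\sigma$ is a well-defined isomorphism, $\dim\scrE_\sigma=\dim\wh\scrE_\sigma$ and the dimension count closes. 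Concretely I would argue: the map $u\mapsto G_\sigma u$ factors through $u\mapsto\Pi_\sigma(f_0^{-1}\wh u)$, and the $\ell=0$ term $G_\sigma^{(0)}u=\int_{|z-\sigma|=\eps}t^{-z}\Pi_\sigma(f_0^{-1}\wh u)(z)\dbar z$ already determines $\Pi_\sigma(f_0^{-1}\wh u)$ completely (it is the inverse Mellin transform of a principal part, hence injectively recoverable from the resulting finite sum of $t^{-z}\ln^k t$ terms). Therefore the correspondence $G_\sigma u\mapsto G_\sigma^{(0)}u$ is well-defined: two inputs giving the same $G_\sigma u$ give the same principal part, hence the same $G_\sigma^{(0)}u$. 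Surjectivity of $\theta_\sigma$ onto $\wh\scrE_\sigma=\mathrm{range}\,G_\sigma^{(0)}$ is immediate from the definition, and injectivity follows because $G_\sigma u=0$ forces $\Pi_\sigma(f_0^{-1}\wh u)=0$ (read off the $\ell=0$ component) hence $G_\sigma^{(0)}u=0$ as well — wait, that is the wrong direction; rather, $G_\sigma^{(0)}u=0$ means $\Pi_\sigma(f_0^{-1}\wh u)=0$, which makes every $G_\sigma^{(\ell)}u=0$ and hence $G_\sigma u=0$, giving injectivity of $\theta_\sigma$.

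I expect the main obstacle to be the bookkeeping in the step identifying the recursion \eqref{eq:recursion}–\eqref{eq:recursion2} with the condition $A(\omega G_\sigma u)\in\scrD_{\min}$: one must carefully track how the operator $(-t\partial_t)$ acts on $t^{\ell-z}$ (producing $(z-\ell)$ factors absorbed into shifted arguments of $f_{\ell-j}$), how the contour integrals interact with the shift operators $T^{-j}$, and — most delicately — how the principal-part projectors $\Pi_\sigma$ commute with these operations, since $g_\ell$ itself has poles at $\sigma$ coming from the factor $T^{-\ell}f_0^{-1}$. A clean way to organize this is to work purely at the level of formal Mellin symbols, establish the algebraic identity \eqref{eq:recursion2} first, and only afterwards insert the contour integral and the $\Pi_\sigma$; the growth/regularity verifications and the dimension count are then comparatively routine, relying on Theorem \ref{thm:max_domain1}, Proposition \ref{lem:ext01}, and the standard mapping properties of the Mellin transform on $\scrC^\infty_\comp(X^\wedge)$.
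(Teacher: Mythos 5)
Your plan coincides with the paper's own proof: the verification that $A(\omega G_\sigma u)\in\scrC^{\infty,\gamma}_0(\bz)$ via Taylor expansion of the coefficients, the Mellin shift identity $f(-t\partial_t)t^\rho=t^\rho(T^{-\rho}f)(-t\partial_t)$ and the recursion \eqref{eq:recursion2} is exactly Proposition \ref{prop:max}; the recovery of $\Pi_\sigma(f_0^{-1}\wh{u})$ from $G^{(0)}_\sigma u$ together with the trivial intersection of the ranges of $G^{(0)}_\sigma$ and $\sum_{\ell\ge1}G^{(\ell)}_\sigma$ (distinct powers of $t$) is exactly Proposition \ref{prop:basis}; and the dimension count through Theorem \ref{thm:max_domain1} and Proposition \ref{lem:ext01} supplies the exhaustion that the paper leaves implicit. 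The only blemish is the parenthetical inclusion $\scrC^{\infty,\gamma}_0(\bz)\subset\scrD_{\min}(A)$, which is false (such functions need not have weight $\gamma+\mu$) but also unnecessary, since $\scrC^{\infty,\gamma}_0(\bz)\subset\calH^{s,\gamma}_p(\bz)$ already yields $\omega G_\sigma u\in\scrD_{\max}(A)$.
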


This is a consequence of Propositions \ref{prop:max} and \ref{prop:basis}, below. 
The maps $\theta_\sigma$ induce a one-to-one correspondence between the subspaces of 
$\scrE=\mathop{\mbox{\Large$\oplus$}}_{\sigma\in S_\gamma}\scrE_\sigma$ 
and $\wh{\scrE}=\mathop{\mbox{\Large$\oplus$}}_{\sigma\in S_\gamma}\wh{\scrE}_\sigma$, respectively, 
i.e., an  isomorphism 
\begin{equation}\label{eq:Grassmannian}
 \Theta:\mathrm{Gr}(\scrE)\longrightarrow \mathrm{Gr}(\wh{\scrE})
\end{equation}
between the corresponding Grassmannians. Hence we obtain a one-to-one correspondence between 
the closed extensions of $A$ and $\wh{A}$, respectively. 

\subsubsection{An example} \label{subsec:example}
The operators $G^{(\ell)}_\sigma$ introduced above are explicitly computable by  the residue 
theorem. 

Let us consider  a second order operator  $A$ whose inverted conormal symbol $f_0^{-1}$ has only simple 
poles. This happens, for instance, when $A$ is the conical Laplacian and $\bz$ has dimension larger or equal than $3$; in 
the two-dimensional case there is, in addition, one double pole in $z=0$ $($cf. Section \ref{sec:applications} for more details$)$. 

Let $\sigma$ be such a pole and denote by $\alpha_\sigma$ the residue of $f_0^{-1}$ in $\sigma$. Recall that 
$\alpha_\sigma$ is a smoothing pseudodifferential operator on $X$ and that $E_\sigma:=\mathrm{range}\,
\alpha_\sigma$ is a finite-dimensional subspace of $\scrC^\infty(X)$. 
Using the above notation, 
 $$G^{(0)}_\sigma(u)=t^{-\sigma}\alpha_\sigma(\wh{u}(\sigma)),\qquad u\in\scrC^\infty_{\mathrm{comp}}(X^\wedge).$$
Since the range of $u\mapsto\wh{u}(\sigma)$ is $\scrC^\infty(X)$, this implies that 
 $$\wh{\scrE}_\sigma=\big\{t^{-\sigma}e\mid e\in E_\sigma\big\}.$$

In case $\frac{n+1}{2}-\gamma-2<\re \sigma<\frac{n+1}{2}-\gamma-1$ we have $\scrE_\sigma=\wh{\scrE}_\sigma$, since then 
$G_\sigma=G_\sigma^{(0)}$. 
In case $\frac{n+1}{2}-\gamma-1\le\re \sigma<\frac{n+1}{2}-\gamma$, the structure of $\scrE_\sigma$ depends on 
$g_1=-(T^{-1}f_0^{-1})f_1$: Write, near $\sigma$, 
   $$g_1(z)\equiv \beta_\sigma(z-\sigma)^{-1}+\beta^{0}_\sigma$$
modulo a holomorphic function vanishing in $\sigma$. In case $g_1$ is holomorphic in $\sigma$, obviously $\beta_\sigma=0$ 
and  $\beta^{0}_\sigma=g_1(\sigma)$. Now one  computes 
   $$G^{(1)}_\sigma(u)=t^{-\sigma+1}\Big(\beta^0_\sigma\alpha_\sigma(\wh{u}(\sigma))
       +\beta_\sigma\alpha_\sigma(\wh{u}(\sigma))\log t\Big).$$
  It follows that 
   $$\scrE_\sigma=\big\{t^{-\sigma}e+t^{-\sigma+1}\big(\beta^0_\sigma e+\beta_\sigma e\log t\big) \mid e\in E_\sigma\big\}.$$

\subsection{The proof of Theorem \ref{thm:theta}}\label{sec:appendix}

Let $\omega, \omega_1 \in \scrC^\infty_{\mathrm{comp}}([0,1))$ be  cut-off functions. 

\begin{proposition}\label{prop:max}
$\omega\scrE_\sigma$ is a subspace of $\scrD_{\max}(A)$. 
\end{proposition}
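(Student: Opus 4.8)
The plan is to show that each element $\omega G_\sigma(u)$ lies in $\scrD_{\max}(A)$, i.e. that it belongs to $\calH^{s,\gamma}_p(\bz)$ and that its image under $A$ does as well. First I would note that, by \eqref{eq:gsigma}, $G_\sigma(u)$ is a finite sum of terms of the form $t^{\ell}\int_{|z-\sigma|=\eps} t^{-z} g_\ell(z)\,\Pi_\sigma(f_0^{-1}\wh u)(z)\,\dbar z$, and by the residue theorem each of these is a finite linear combination of functions $c(x)\,t^{-\sigma+\ell}\log^k t$ with $c\in\scrC^\infty(X)$ and $0\le k$ bounded by the pole order of the integrand. Since $\re(-\sigma+\ell)\le \re(-\sigma)+\mu_\sigma < \frac{n+1}{2}-\gamma$ by the definition of $\mu_\sigma$ in \eqref{eq:gsigma} and the constraint $\re\sigma<\frac{n+1}{2}-\gamma$ in \eqref{eq:sigma} (one must check the inequality is strict for the top term, using $\mu_\sigma=[\re\sigma+\mu+\gamma-\frac{n+1}{2}]$), each such function, multiplied by the cut-off $\omega$, lies in $\calH^{s,\gamma}_p(\bz)$ for every $s$; in fact it lies in $\scrC^{\infty,\gamma}_0(\bz)$ near $t=0$. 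This settles membership in the space itself.

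Next I would compute $A(\omega G_\sigma(u))$. Away from $t=0$ this is smooth and compactly supported in the interior, hence harmless, so the point is the behavior near the boundary, where $A=t^{-\mu}\sum_{k=0}^\mu a_k(t)(-t\partial_t)^k$. The key algebraic step is to apply $A$ to the Mellin-type integral term by term. Expanding $a_k(t)=\sum_{\ell\ge 0} a_k^{(\ell)} t^\ell$ (in the sense of Taylor expansion at $t=0$, which is legitimate for the analysis of the leading asymptotics, with a remainder that improves the weight) and using $(-t\partial_t) t^{-z} = z\, t^{-z}$, one sees that $t^{-\mu}\sum_k a_k(t)(-t\partial_t)^k$ acts on $t^{-z}g_\ell(z)$ essentially by multiplication by $t^{-\mu}\sum_{m}\big(\sum_k a_k^{(m)} (z-\ell+m)^k\big) t^{m}$; after collecting powers and reindexing, the recursion relations \eqref{eq:recursion2} for the $g_\ell$ are precisely designed so that the contributions of order $t^{-\mu}t^{j}$ with $j\le \mu_\sigma$ telescope: the sum $\sum_{j}(T^{-j}f_{\ell-j})g_j$ vanishes for $\ell\ge 1$ and equals $f_0$ for $\ell=0$, and the factor $\Pi_\sigma(f_0^{-1}\wh u)$ exactly cancels the surviving $f_0$. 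Hence $A(\omega G_\sigma(u))$ has no "bad" terms of order worse than $t^{-\mu+\mu_\sigma+1}$ near $t=0$ (plus commutator terms coming from $A$ hitting $\omega$ rather than $G_\sigma(u)$, which are smooth with compact support in the interior, and plus the Taylor remainder terms, which carry extra powers of $t$). One then checks that $t^{-\mu+\mu_\sigma+1}\log^k t$, by the choice $\mu_\sigma=[\re\sigma+\mu+\gamma-\frac{n+1}{2}]$, has real exponent $\ge \frac{n+1}{2}-\gamma$, so that these terms lie in $\calH^{s,\gamma}_p(\bz)$; therefore $A(\omega G_\sigma(u))\in\calH^{s,\gamma}_p(\bz)$, which is what we need.

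The main obstacle, and the part requiring the most care, is the bookkeeping in the second step: tracking how $A$ acts on the contour integrals, interchanging $A$ with the contour integration, correctly handling the principal-part projector $\Pi_\sigma$ (so that only the Laurent principal part at $\sigma$ enters and the resulting functions really are of the stated form $c(x)t^{-p}\log^k t$), and verifying that the recursion \eqref{eq:recursion2} produces exactly the cancellation claimed — including the precise count of how many orders $\mu_\sigma$ must be summed so that the first uncancelled term already lands in the target space. A secondary technical point is justifying the Taylor expansion of the coefficients $a_k(t)$: one writes $a_k(t)=\sum_{\ell=0}^{N} a_k^{(\ell)} t^\ell + t^{N+1} r_{k,N}(t)$ with $r_{k,N}$ smooth, and must check that for $N$ large enough the remainder contribution, applied to $\omega G_\sigma(u)$, lands in $\scrC^{\infty,\gamma}_0(\bz)\subset\calH^{s,\gamma}_p(\bz)$; this is routine once the leading-order cancellation is understood. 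Finally I would remark that the same computation, applied to the model cone operator $\wh A$ with its Taylor coefficients frozen at $t=0$ (so only the $\ell=0$ term survives and $G_\sigma$ reduces to $G_\sigma^{(0)}$), shows $\omega\wh\scrE_\sigma\subset\scrD_{\max}(\wh A)$ and explains the compatibility underlying the isomorphism $\theta_\sigma$ in \eqref{eq:isom1}.
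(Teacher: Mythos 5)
Your proposal follows essentially the same route as the paper's proof: Taylor-expand the coefficients into the conormal symbols $f_j$ (with a weight-improving remainder), commute powers of $t$ past the Mellin action via $f(-t\partial_t)t^{\rho}=t^{\rho}(T^{-\rho}f)(-t\partial_t)$, and use the recursion \eqref{eq:recursion2} so that all contributions of order $t^{j-\mu}$, $j\le\mu_\sigma$, disappear --- for $k=0$ because $f_0\,\Pi_\sigma(f_0^{-1}\wh{u})$ is holomorphic at $\sigma$, so the contour integral vanishes --- while the definition of $\mu_\sigma$ places the first surviving order in $\calH^{s,\gamma}_p(\bz)$. Only minor bookkeeping slips: the membership condition should read $\re\sigma-\ell<\frac{n+1}{2}-\gamma$ (so larger $\ell$ only helps, and likewise the exponent of the first uncancelled term still carries $-\sigma$), and the $k=0$ term is not literally cancelled but rendered holomorphic, which is exactly the mechanism the paper invokes.
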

\begin{proof}
By construction, $\omega\scrE_\sigma$ is contained in $\scrC^{\infty,\gamma}_0(\bz)$ because 
it consists of functions of the form $\omega(t)c(x)t^{-\sigma}\ln^\ell t$ with $c(x)\in\scrC^\infty(X)$ 
and $\re\sigma<(n+1)/2-\gamma$. 
 
Now let $v=\omega G_\sigma(u)$ with $u\in\scrC^\infty_{\mathrm{comp}}(X^\wedge)$. 
We show that $Av$ belongs to $\scrC^{\infty,\gamma}_0(\bz)$. Choose $\omega_1$ such that 
$\omega\equiv1$ in a neighborhood of the support of $\omega_1$. Since $(1-\omega_1)Av$ is smooth 
and compactly supported in the interior of $\bz$, it suffices to analyze $\omega_1Av$. 
Now we can write  
 $$\omega_1A=\omega_1 t^{-\mu}\sum\limits_{j=0}^{\mu-1} t^j f_j(-t\partial_t)+R,$$
with a remainder $R$ that maps $\scrC^{\infty,\gamma}_0(\bz)$ into itself. 
Observing that 
$\omega G^{(\ell)}_\sigma$ maps $\scrC^{\infty}_{\mathrm{comp}}(X^\wedge)$ into $\scrC^{\infty,\gamma+\mu-(\mu_\sigma+1)+\ell}_0(\bz)$,  
we see that $\omega_1Av$ belongs to $\scrC^{\infty,\gamma}_0(\bz)$ provided 
\begin{align}\label{eq:sum}
\begin{split}
 \omega_1\, t^{-\mu}&\sum_{j=0}^{\mu_\sigma}\sum_{\ell=0}^{\mu_\sigma-j}
     t^j f_j(-t\partial_t) G^{(\ell)}_\sigma(u)\\
  &=\omega_1\, t^{-\mu}\sum_{j=0}^{\mu_\sigma}
     \sum_{\ell=0}^{\mu_\sigma-j}t^{j+\ell}(T^{-\ell} f_j)(-t\partial_t)(t^{-\ell} G^{(\ell)}_\sigma(u))
     \;\in\; \scrC^{\infty,\gamma}_0(\bz);
\end{split}
\end{align}
note that we have used the Mellin operator identity $f(-t\partial_t)t^{\rho}=t^{\rho}(T^{-\rho}f)(-t\partial_t)$.   
Rearranging the summation \eqref{eq:sum} equals 
\begin{equation}\label{eq:av}
 \omega_1\, 
   t^{-\mu}\sum_{k=0}^{\mu_\sigma}t^k\sum_{\ell=0}^{k}(T^{-\ell}f_{k-\ell})(-t\partial_t)
   (t^{-\ell} G^{(\ell)}_\sigma(u))
   \;\in\; \scrC^{\infty,\gamma}_0(\bz). 
\end{equation}

Inserting the expression \eqref{eq:sigma0} for $G_\sigma^{(\ell)}$, the summation over $\ell$ in \eqref{eq:av} then yields, for each $k$,  
 $$\int_{|z-\sigma|=\varepsilon}
     t^{-z}\sum_{\ell=0}^{k}(T^{-\ell}f_{k-\ell})(z)g_\ell(z)
   \Pi_\sigma(f_0^{-1}\wh{u})(z)\,\dbar z.$$
Using \eqref{eq:recursion2}, we conclude that \eqref{eq:av} is equal to zero.  
\end{proof}

\begin{proposition}\label{prop:basis}
Let $u,v\in\scrC^{\infty}_{\mathrm{comp}}(X^\wedge)$. Then $G_\sigma(u)=G_\sigma(v)$ if and only if 
$G^{(0)}_\sigma(u)=G^{(0)}_\sigma(v)$. In particular, $\scrE_\sigma$ has the same 
dimension as $\wh{\scrE}_\sigma$. 
\end{proposition}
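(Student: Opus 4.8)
The plan is to analyze the map $u\mapsto G_\sigma(u)$ by isolating its "leading part" $G^{(0)}_\sigma(u)$ and showing that the remaining terms $G^{(\ell)}_\sigma(u)$ for $\ell\ge 1$ are completely determined by $G^{(0)}_\sigma(u)$. The crucial observation is baked into formula \eqref{eq:sigma0}: every $G^{(\ell)}_\sigma(u)$ depends on $u$ only through the meromorphic germ $\Pi_\sigma(f_0^{-1}\wh u)$, i.e. through the finitely many Laurent coefficients of $f_0^{-1}\wh u$ at $\sigma$. Since $G^{(0)}_\sigma(u)(t)=\int_{|z-\sigma|=\eps} t^{-z}\,\Pi_\sigma(f_0^{-1}\wh u)(z)\,\dbar z$ is, by the residue theorem, precisely a function encoding those same Laurent coefficients (as coefficients of $t^{-\sigma}\ln^k t$), the first step is to show that the assignment $\Pi_\sigma(f_0^{-1}\wh u)\mapsto G^{(0)}_\sigma(u)$ is \emph{injective} on the relevant finite-dimensional space of principal parts. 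Concretely, if $\Pi_\sigma(f_0^{-1}\wh u)(z)=\sum_{k\ge 1} c_k (z-\sigma)^{-k}$ with $c_k\in\scrC^\infty(X)$, then $G^{(0)}_\sigma(u)(t)=t^{-\sigma}\sum_{k\ge 1}\frac{(-1)^{k-1}}{(k-1)!} c_k (\ln t)^{k-1}$, so $G^{(0)}_\sigma(u)=0$ forces all $c_k=0$, hence $\Pi_\sigma(f_0^{-1}\wh u)=0$.

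The second step is to prove the implication "$G^{(0)}_\sigma(u)=G^{(0)}_\sigma(v)\Rightarrow G_\sigma(u)=G_\sigma(v)$". By linearity it suffices to treat $v=0$, so assume $G^{(0)}_\sigma(u)=0$. By Step 1 this gives $\Pi_\sigma(f_0^{-1}\wh u)=0$. But then, inspecting \eqref{eq:sigma0}, the integrand of $G^{(\ell)}_\sigma(u)$ contains the factor $\Pi_\sigma(f_0^{-1}\wh u)(z)$, which now vanishes identically near $\sigma$; since $g_\ell$ is meromorphic, $t^\ell t^{-z} g_\ell(z)\Pi_\sigma(f_0^{-1}\wh u)(z)$ is holomorphic on (a punctured neighborhood bounded by) $|z-\sigma|=\eps$ — in fact identically zero — so the contour integral vanishes and $G^{(\ell)}_\sigma(u)=0$ for every $\ell$, whence $G_\sigma(u)=\sum_{\ell=0}^{\mu_\sigma}G^{(\ell)}_\sigma(u)=0$. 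The reverse implication "$G_\sigma(u)=G_\sigma(v)\Rightarrow G^{(0)}_\sigma(u)=G^{(0)}_\sigma(v)$" is the genuinely nontrivial direction, and I would handle it by a support/asymptotics argument: the terms $G^{(\ell)}_\sigma(u)$ carry the factor $t^\ell$, so writing $G_\sigma(u)(t)$ as an asymptotic sum $\sum_\ell t^{\ell}\cdot(\text{combination of } t^{-\sigma}\ln^k t)$, the summands with distinct $\ell$ live in distinct "weight layers" $t^{\ell-\sigma}\ln^k t$ and cannot cancel each other; matching the $\ell=0$ layer recovers $G^{(0)}_\sigma(u)=G^{(0)}_\sigma(v)$. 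One must be slightly careful because different $\sigma$ could a priori interfere, but we are working within a fixed $\sigma$, and the functions $\{t^{\ell-\sigma}(\ln t)^k : \ell\ge 0, k\ge 0\}$ are linearly independent over $\scrC^\infty(X)$, which makes the layer-matching rigorous.

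Finally, the statement "$\scrE_\sigma$ has the same dimension as $\wh\scrE_\sigma$" is then immediate: the equivalence $G_\sigma(u)=G_\sigma(v)\iff G^{(0)}_\sigma(u)=G^{(0)}_\sigma(v)$ says precisely that $u\mapsto G_\sigma(u)$ and $u\mapsto G^{(0)}_\sigma(u)$ have the same fibers on $\scrC^\infty_\comp(X^\wedge)$, so both induce the same quotient, and the well-defined map $G_\sigma(u)\mapsto G^{(0)}_\sigma(u)$ is a linear bijection $\scrE_\sigma=\mathrm{range}\,G_\sigma\to\mathrm{range}\,G^{(0)}_\sigma=\wh\scrE_\sigma$. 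This is exactly the isomorphism $\theta_\sigma$ of \eqref{eq:isom1}.

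\textbf{Main obstacle.} I expect the forward direction $G_\sigma(u)=G_\sigma(v)\Rightarrow G^{(0)}_\sigma(u)=G^{(0)}_\sigma(v)$ — the "weight-layer separation" — to be the real work: one has to be sure that the $\ell$-th term really contributes nontrivially only in the $t^{\ell-\sigma}$-layer and that no conspiracy among the $g_\ell$, the principal parts, and the powers of $\ln t$ allows hidden cancellation across layers. Once the linear independence of $t^{\ell-\sigma}(\ln t)^k$ over $\scrC^\infty(X)$ is invoked cleanly, the rest is bookkeeping with the residue theorem and the recursion \eqref{eq:recursion2}.
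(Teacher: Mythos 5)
Your proposal is correct and follows essentially the paper's own argument: the residue computation identifying $G^{(0)}_\sigma(w)=0$ with the vanishing of $\Pi_\sigma(f_0^{-1}\wh{w})$, which kills all $G^{(\ell)}_\sigma(w)$ at once, and for the converse the separation of the $t^{\ell-\sigma}\log^k t$ layers, which the paper phrases as $\mathrm{range}\,G^{(0)}_\sigma\cap\mathrm{range}\,\sum_{\ell\ge1}G^{(\ell)}_\sigma=\{0\}$ "by construction". Your identification of the linear independence of $\{t^{\ell-\sigma}(\log t)^k\}$ over $\scrC^\infty(X)$ as the justification of that step is exactly the content the paper leaves implicit.
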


\begin{proof}
Set $w=u-v$ and write 
 $$\Pi_\sigma(f_0^{-1}\wh{w})(z)=\sum_{\ell=0}^m c_\ell(z-\sigma)^{-\ell-1}$$
with coefficient functions $c_\ell\in\scrC^\infty(X)$. Since $t^{-z} =t^{-\sigma}\exp(-(z-\sigma)\log t)$, 
$$\res_{z=\sigma}t^{-z}\Pi_\sigma(f_0^{-1}\wh{w})(z) 
 =t^{-\sigma}\sum_{\ell=0}^m \frac{(-1)^\ell}{\ell!}c_\ell\log^{\ell}t.$$
Thus $G^{(0)}_\sigma(w)=0$ if and only if all $c_\ell\equiv0$, i.e., if and only if 
$\Pi_\sigma (f_0^{-1}\wh{w})\equiv0$. This obviously implies $G_\sigma(w)=0$. 
Conversely, $G_\sigma(w)=0$ implies that 
$G^{(0)}_\sigma(w)=-\sum\limits_{\ell=1}^{\mu_\sigma}G_\sigma^{(\ell)}(w)$. 
However, by construction,  
 $$\mathrm{range}\,G^{(0)}_\sigma\cap 
   \mathrm{range}\,\sum_{\ell=1}^{\mu_\sigma}G_\sigma^{(\ell)}=\{0\}.$$
This shows $G^{(0)}_\sigma(w)=0$. 
The same argument shows that functions 
$G^{(0)}_\sigma(u_1)$,$\ldots$, $G^{(0)}_\sigma(u_r)$
are linearly independent in $\mathrm{range}\,G_\sigma^{(0)}$, if and only if
$G_\sigma(u_1),\ldots,G_\sigma(u_r)$ are linearly independent in $\mathrm{range}\,G_\sigma$.
\end{proof}

\section{Parameter-ellipticity and resolvent of closed extensions}\label{sec:03}

Let $\Lambda$ be the sector from \eqref{eq:intro01} and $\ulA$ a closed extension of $A$ in 
$\calH_p^{s,\gamma}(\bz)$ with domain 
$\scrD(\ulA)=\calH^{s+\mu,\gamma+\mu}_p(\bz)\oplus \omega\ulE$ for a subspace $\ulE$ of $\scrE$. 
We will next state  three conditions which will allow us to construct the resolvent to $\ulA$ for large $\lambda$ in 
$\Lambda$ and to determine its structure. 
Actually, these conditions are independent of $s$ and $p$. They involve the model cone operator 
$\wh{A}$, considered as unbounded operator in $\calK^{0,\gamma}_2(X^\wedge)$ with domain 
$ \scrD(\ulwhA)=\calK^{\mu,\gamma+\mu}_2(X^\wedge)\oplus \ulwhE$, where $\ulwhE:=\Theta^{-1}\ulE$, see \eqref{eq:Grassmannian}. 


We call $\ulA$  \textit{parameter-elliptic with respect to $\Lambda$}, if 
\begin{itemize}
 \item[(E1)]  Both $\lambda-\sigma^\mu_\psi(A)$ and 
 $\lambda-\wt\sigma^\mu_\psi(A)$ are invertible  in the sector $\Lambda$.
 \item[(E2)] The principal conormal symbol $\sigma^\mu_M(A)(z)$ is invertible for all $z\in\cz$ with 
  $\re z=\frac{n+1}{2}-\gamma-\mu$ or $\re z=\frac{n+1}{2}-\gamma$. 
 \item[(E3)] $\Lambda$ is a sector of minimal growth for $\ulwhA$, i.e., there exist $C,R\ge0$ such that, 
  for $\lambda \in \Lambda$, $|\lambda|\ge R$, the operator $\lambda-\ulwhA$ is invertible and 
  $$\|\lambda (\lambda-\ulwhA)^{-1}\|_{\scrL(\calK^{0,\gamma}_2(X^\wedge))}\le C.$$
 \end{itemize}

Condition $($E$2)$ assures that   $\scrD_{\min}(A)= \calH^{s+\mu,\gamma+\mu}_p(\bz)$  and $\scrD_{\min}(\wh{A}) = \calK^{\mu,\gamma+\mu}_2(X^\wedge)$,
cf. i$)$ in the beginning of Section \ref{sec:02.1}. 
The invertibility for $z$ with real part $\frac{n+1}{2}-\gamma$ is a symmetry condition
used for treating the adjoint.

Below, it will be convenient to replace the variable $\lambda$ by $\eta^\mu$ in order to 
raise the order of the parameter from $1$ to $\mu$, which is the order of $A$. 
So let  
 $$\Sigma=\Sigma(\Lambda)=\left\{se^{i\varphi}\mid s\ge 0,\;
   \theta\le\mu\varphi\le 2\pi-\theta\right\};$$ 
then $\eta\mapsto\eta^\mu:\Sigma\longrightarrow\Lambda$ is a bijective map. 

\forget{
As we shall see in Section \ref{sec:03.3}, below, condition $($E$3)$ is independent of $\rho$ in the sense that, 
if it holds for some particular value $\rho=\rho_0$, it also holds for arbitrary $\rho\in\rz$. We shall use this fact from now on. 
A similar comment concerns the choice of $p=2$.
To show this, however, requires a lengthy argument; we shall confine ourselves to  
Remark \ref{rem:p-ind}, below. 
We think that this is sufficient, since condition $($E$3)$ is most easily verified in the Hilbert space case $p=2$.
}

\subsection{Ellipticity condition $($E$3)$}\label{sec:03.3}

We shall demostrate that, in $($E$3)$, we could as well consider $\wh{A}$ as an unbounded operator 
in $\calK^{0,\gamma}_2(X^\wedge)^\rho$ with an arbitrary choice of  $\rho\in\rz$. 

In fact, as mentioned after Proposition \ref{prop:independent}, the invertibility of 
  $$\lambda-\ulwhA:\calK^{\mu,\gamma+\mu}_2(X^\wedge)^\rho\oplus\omega\ulwhE\lra 
      \calK^{0,\gamma}_2(X^\wedge)^\rho$$
is independent of the choice of $\rho$. We shall show that this is also true for the finiteness of the supremum 
\begin{equation}\label{eq:res}
  \sup_{\lambda\in\Lambda,\,|\lambda|\ge R}\|\lambda(\lambda-\ulwhA)^{-1}\|_{\scrL(\calK^{0,\gamma}_2(X^\wedge)^\rho)}.
\end{equation}

Assume that \eqref{eq:res} is finite for some $\rho=\rho_0$. 
Recall that for an unbounded operator $T:\scrD\subset X\to X$ in a Banach space $X$, the uniform boundedness of $\lambda(\lambda-T)^{-1}$ 
in $\scrL(X)$  for $\lambda $  in a $($truncated$)$ sector is equivalent to that of  $(\lambda-T)^{-1}$ in  $\scrL(X,\scrD)$, where $\scrD$ carries the  graph norm. 
As the domain of $\wh{A}$ is continuously embedded in  $\calK^{\mu,\gamma}_2(X^\wedge)^\rho$, 
it follows that $(\lambda-\ulwhA)^{-1}$ is uniformly 
bounded in $\scrL(\calK^{0,\gamma}_p(X^\wedge)^{\rho_0},\calK^{\mu,\gamma}_2(X^\wedge)^{\rho_0})$. 
The complex interpolation identity  
 $$\big(\calK^{0,\gamma}_{p}(X^\wedge)^\rho,\calK^{\mu,\gamma}_{p}(X^\wedge)^\rho\big)_{[\theta]}
     =\calK^{\mu\theta ,\gamma}_p(X^\wedge)^\rho$$
implies for $\theta = (\mu-1)/\mu$ the uniform estimate 
\begin{equation}\label{eq:interpolation}
 \|(\lambda-\ulwhA)^{-1}\|_{\scrL\left(\calK^{0,\gamma}_{2}(X^\wedge)^{\rho_0},
 \calK^{\mu-1,\gamma}_{2}(X^\wedge)^{\rho_0}\right)}
 \le C\,|\lambda|^{-1/\mu},\qquad |\lambda|\ge R.
\end{equation}
Now consider another choice $\rho=\rho_1$ and let $r$ be a smooth positive function on $\rz_+$ with $r(t)=1$ for $t\le 1$ 
and $r(t)=t^{\rho_0-\rho_1}$ for large $t$. 
Set $\wh{B}:=r^{-1}\wh{A}r$. 
Note that $\wh{A}-\wh{B}=r^{-1}[r,\wh{A}]$ vanishes on $(0,1)\times X$ and is of 
order $\mu-1$, i.e., 
\begin{equation}\label{eq:regular}
 \wh{A}-\wh{B}:\calK^{s,\gamma}_{2}(X^\wedge)^{\rho}\lra\calK^{s-\mu+1,\gamma}_{2}(X^\wedge)^{\rho}
     \qquad\forall\;s,\rho\in\rz
\end{equation}
$($actually, on the right-hand side one can replace $\rho$ by the better weight $\rho+1$; 
however, we shall not need this fact$)$. 

Since multiplication by $r$ induces isomorphisms from $\calK^{s,\gamma}_{p}(X^\wedge)^{\rho_0}$ to 
$\calK^{s,\gamma}_{2}(X^\wedge)^{\rho_1}$, studying the resolvent of $\ulwhA$ in 
$\calK^{0,\gamma}_{2}(X^\wedge)^{\rho_1}$ is  equivalent to studying the resolvent of $\ulwhB$ in 
$\calK^{0,\gamma}_{2}(X^\wedge)^{\rho_0}$, where $\ulwhB$ has the same domain  
in $\calK^{0,\gamma}_{2}(X^\wedge)^{\rho_0}$ as $\ulwhA$. 
The resolvent identity 
 $$(\lambda-\ulwhA)^{-1}-(\lambda-\ulwhB)^{-1}=(\lambda-\ulwhB)^{-1}(\ulwhA-\ulwhB)(\lambda-\ulwhA)^{-1}$$
yields 
 $$(\lambda-\ulwhB)^{-1}=(\lambda-\ulwhA)^{-1}\Big[1+(\wh{A}-\wh{B})(\lambda-\ulwhA)^{-1}\Big]^{-1}. $$
By \eqref{eq:interpolation} and \eqref{eq:regular} $($with $s=\mu-1)$ the norm of  
$(\wh{A}-\wh{B})(\lambda-\ulwhA)^{-1}$ in $\scrL(\calK^{0,\gamma}_{2}(X^\wedge)^{\rho_0})$ is $O(|\lambda|^{-1/\mu})$. 
A von Neumann series argument implies that  
the inverse $[\ldots]^{-1}$ exists and is uniformly bounded 
in $\lambda$. We deduce that $(\lambda-\ulwhB)^{-1}$ decays like $|\lambda|^{-1}$, showing that \eqref{eq:res} 
also holds for the choice $\rho=\rho_1$.

\begin{remark}\label{rem:p-ind}
\rm
In $($E$3)$ one can also substitute $p=2$ by any other choice of $p\in(1,+\infty)$. 
However, there seems to be no analog of the simple proof used above.
Instead, one needs to show that the resolvent is an element of a  calculus of 
parameter-dependent pseudodifferential operators on the infinite cone $X^\wedge$. 
Then general mapping properties of such  operators give the norm-estimate 
of the resolvent simultaneously for all $p$.  
It exceeds the scope of this paper to go into the  details. 
Anyway, condition $($E$3)$ is most easily verified in the Hilbert space case $p=2$. 
\end{remark}

\subsection{Parameter-dependent Green operators}\label{sec:03.1}

We will describe the structure of the resolvent of $\ulA$, using Schulze's calculus for
parameter-dependent operators on conical manifolds with the slight modification that the 
parameter-dependent Green operators are not assumed to be classical. 
We will next discuss this in more detail. 
 
For $\gamma,\gamma^\prime\in\rz$ and $\eps>0$ let   
\begin{align*}
 \scrS^{{\gamma^\prime},\gamma}_\eps(X^\wedge\times X^\wedge)
    &=\scrS^{\gamma^\prime}_\eps(X^\wedge)\,\wh{\otimes}_\pi\,\scrS^\gamma_\eps(X^\wedge),\\
 \scrC^{\infty,{\gamma^\prime},\gamma}_\eps(\bz\times\bz)
    &=\scrC^{\infty,{\gamma^\prime}}_\eps(\bz)\,\wh{\otimes}_\pi\,\scrC^{\infty,\gamma}_\eps(\bz). 
\end{align*}
Recall that a function $u=u(t,x)$ belongs to $\scrS^{\gamma}_\eps(X^\wedge)$, $\eps>0$, if and only if 
 $$t^{\frac{n+1}{2}-\gamma-\delta}\spk{t}^{k}(t\partial_t)^pD_xu(t,x)
     \in L^2\Big(\rz_+\times X,\frac{dt}{t}dx\Big)$$
for every choice of integers $k$ and $p$, all differential operators $D$ on $X$, and $0\le\delta<\eps$. 
\forget{
Note that a function $k(t,x,s,y)$ belongs to $\scrS^{{\gamma^\prime},\gamma}(X^\wedge\times X^\wedge)$ if, 
and only if, 
 $$t^{\frac{n+1}{2}-\gamma^\prime}s^{\frac{n+1}{2}-\gamma}
     \spk{s}^k\spk{t}^{k^\prime}\spk{y}^\ell\spk{x}^{\ell^\prime}(s\partial_s)^p(t\partial_t)^{p^\prime}A_yA^\prime_x
     k\in L^2\Big(X^\wedge\times X^\wedge,\frac{ds}{s}\frac{dt}{t}dxdy\Big)$$
for every choice of integers $k,k^\prime,\ell,\ell^\prime,p,p^\prime$ and differential operators $A,A^\prime$ on $X$. 
}

\begin{definition}
Let $\eta\mapsto [\eta]$ be a smooth positive function with $[\eta]=|\eta|$ for $|\eta|\ge 1$. 
By $\mathcal{R}^\nu_G(\Sigma;\gamma,{\gamma^\prime})$ denote the space of all operator-families $a(\eta)$, $\eta\in \Sigma$, of the form 
 $$(a(\eta)u)(t,x)=[\eta]^{n+1}\int_0^\infty\int_{X}k_a(\eta,t[\eta],x,s[\eta],y) u(s,y)\,s^n dsdy$$
with an integral kernel satisfying, for some $\eps=\eps(a)>0$,
 $$k_a(\eta,t,x,s,y)\in S^{\nu}(\Sigma,\scrS^{{\gamma^\prime},-\gamma}_\eps
    (X^\wedge_{(t,x)}\times X^\wedge_{(s,y)})).$$
\end{definition}

Here we do not require $k_g$ to be a classical symbol. 

\begin{definition}
The space $\calC^\nu_G(\Sigma;\gamma,{\gamma^\prime})$ consists of all operator-families $g(\eta)$, $\eta\in \Sigma$, of the form 
 $$g(\eta)=\omega_1\,a(\eta)\,\omega_0+r(\eta),$$
where $\omega_0,\omega_1\in\scrC^\infty([0,1))$ are cut-off functions, 
$a\in \mathcal{R}^\nu_G(\Sigma;\gamma,\rho)$, and 
 $$r\in \scrS(\Sigma,\scrC^{\infty,{\gamma^\prime},-\gamma}_\eps(\bz\times\bz))$$
for some $\eps=\eps(g)>0$. 
\end{definition}

In the representation of $g$ above, the cut-off functions can be changed at the 
cost of substituting $r$ by another element of the same structure. 

\subsubsection{A characterization of Green operators}\label{sec:03.1.1}

We shall show that parameter-depen\-dent Green operators can be characterized 
by certain mapping properties, without reference to the structure of the integral kernels. 
This characterization will be important in the proof of our main theorem. 

\begin{lemma}\label{lem:green}
Let $\mathcal{R}_G(\gamma,\gamma^\prime)_\eps$ denote the 
Fr\'echet  space of all bounded operators 
 $$A:\calK^{0,\gamma}_2(X^\wedge)\lra\calK^{0,\gamma^\prime}_2(X^\wedge)$$ 
such that 
the range of $A$ is contained in $\scrS^{\gamma^\prime}_\eps(X^\wedge)$ and the range of $A^*$  is contained in $\scrS^{-\gamma}_\eps(X^\wedge)$. 
Here the adjoint $*$ refers to the pairings induced by the inner product of $\calK^{0,0}_2(X^\wedge)$.
\forget{
\footnote{Note that $\mathcal{R}_G(\gamma,\gamma^\prime)_\eps$ is a Fr\'{e}chet space in the natural way.
 i.e., as the projective limit with respect to the maps 
\begin{align*}
 A\mapsto A&:\mathcal{R}_G(\gamma,\gamma^\prime)_\eps\to \scrL(\calK^{0,\gamma}_2(X^\wedge),\calK^{s,\gamma^\prime+\eps-1/n}_2(X^\wedge)^{\rho}),\\ 
 A\mapsto A^*&:\mathcal{R}_G(\gamma,\gamma^\prime)_\eps\to
\scrL(\calK^{0,-{\gamma^\prime}}_{2}(X^\wedge),
   \calK^{s,-\gamma+\eps-1/n}_{2}(X^\wedge)^\rho),\qquad s,\rho,n\in\nz.
\end{align*} 
}.
}
Then every such operator $A$ is an integral 
operator with kernel $k_A\in\scrS^{{\gamma^\prime},\gamma}_{\eps/2}(X^\wedge\times X^\wedge)$ 
$($with respect to the measure $t^ndtdx)$ and the following map 
is continuous$:$
 $$A\mapsto k_A:\mathcal{R}_G(\gamma,\gamma^\prime)_\eps\lra
     \scrS^{{\gamma^\prime},-\gamma}_{\eps/2}(X^\wedge\times X^\wedge).$$
\end{lemma}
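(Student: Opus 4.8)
The goal is to show that any bounded operator $A:\calK^{0,\gamma}_2(X^\wedge)\to\calK^{0,\gamma^\prime}_2(X^\wedge)$ whose range lies in $\scrS^{\gamma^\prime}_\eps(X^\wedge)$ and whose adjoint has range in $\scrS^{-\gamma}_\eps(X^\wedge)$ is automatically an integral operator with kernel in $\scrS^{\gamma^\prime,\gamma}_{\eps/2}(X^\wedge\times X^\wedge)$, with continuous dependence. The natural approach is a Schwartz-kernel argument carried out quantitatively. First I would record the functional-analytic input: the hypotheses say that $A$ factors as $\calK^{0,\gamma}_2(X^\wedge)\xrightarrow{A}\scrS^{\gamma^\prime}_\eps(X^\wedge)\hookrightarrow\calK^{0,\gamma^\prime}_2(X^\wedge)$ with the first arrow bounded into a Fr\'echet space (by the closed graph theorem, since $\scrS^{\gamma^\prime}_\eps$ embeds continuously into $\calK^{0,\gamma^\prime}_2$ and $A$ is bounded into the latter), and symmetrically for $A^*$. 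In particular, for every continuous seminorm $q$ on $\scrS^{\gamma^\prime}_\eps(X^\wedge)$ there is $C_q$ with $q(Au)\le C_q\|u\|_{\calK^{0,\gamma}_2}$, and similarly for $A^*$ against seminorms of $\scrS^{-\gamma}_\eps(X^\wedge)$.

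Next I would produce the kernel. Since $A$ maps into $\scrC^\infty(X^\wedge)$-valued functions and is continuous, the Schwartz kernel theorem on the manifold $X^\wedge$ (with the reference measure $s^n\,ds\,dy$) gives a distributional kernel $k_A\in\scrD^\prime(X^\wedge\times X^\wedge)$ with $(Au)(t,x)=\int k_A(t,x,s,y)u(s,y)\,s^n\,ds\,dy$. The content is to upgrade the regularity and decay of $k_A$. For the decay and smoothness in the $(t,x)$-variables: since $Au\in\scrS^{\gamma^\prime}_\eps(X^\wedge)$ for every $u$, and the map $u\mapsto Au$ is continuous into this Fr\'echet space, one tests against $u$ running over an approximate identity concentrated near $(s,y)$; more precisely, for fixed $(s,y)$ one shows $k_A(\cdot,\cdot,s,y)\in\scrS^{\gamma^\prime}_\eps(X^\wedge)$ with the appropriate weight in $s$, and the quantitative bound $q(Au)\le C_q\|u\|_{\calK^{0,\gamma}_2}$ transfers into a bound on the seminorms of $k_A$ in the $(t,x)$-variables by the norm of the dual weight $s^{-\frac{n+1}{2}+\gamma}$-type factor that represents $u\mapsto u(s,y)$ against $\calK^{0,\gamma}_2$. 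Carrying this out for $A^*$ simultaneously yields the corresponding statement in the $(s,y)$-variables. The decay gain $\eps/2$ rather than $\eps$ comes from splitting the weight improvement symmetrically between the two factors and absorbing the loss caused by the duality pairing in $L^2\big(X^\wedge,\frac{dt}{t}dx\big)$ versus $L^2(X^\wedge,t^n\,dt\,dx)$, whose densities differ by a fixed power of $t$.

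The third step is to combine the two one-sided statements into membership in the projective tensor product $\scrS^{\gamma^\prime,\gamma}_{\eps/2}(X^\wedge\times X^\wedge)=\scrS^{\gamma^\prime}_{\eps/2}(X^\wedge)\,\wh\otimes_\pi\,\scrS^{\gamma}_{\eps/2}(X^\wedge)$. Since $\scrS^{\gamma^\prime}_{\eps/2}(X^\wedge)$ is a nuclear Fr\'echet space (it is a countable intersection of weighted Sobolev spaces, hence Fr\'echet, and the weighted scale is nuclear by the usual arguments for rapidly-decreasing Sobolev functions), the projective and injective tensor topologies coincide, and an element of $\scrD^\prime$ whose two partial "slices" lie in the respective factors with continuously varying seminorms lies in the completed tensor product. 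Equivalently, one checks directly that the mixed seminorms
$$
t^{\frac{n+1}{2}-\gamma^\prime-\delta}\spk{t}^{k}(t\partial_t)^{p}D_x\;
s^{\frac{n+1}{2}-(-\gamma)-\delta}\spk{s}^{k^\prime}(s\partial_s)^{p^\prime}D^\prime_y\,k_A
\in L^2\Big(X^\wedge\times X^\wedge,\tfrac{dt}{t}\tfrac{ds}{s}\,dx\,dy\Big)
$$
for all $\delta<\eps/2$, which is exactly membership in $\scrS^{\gamma^\prime,-\gamma}_{\eps/2}(X^\wedge\times X^\wedge)$; the uniformity of the constants $C_q$ across the Fr\'echet structure of $\mathcal{R}_G(\gamma,\gamma^\prime)_\eps$ gives the continuity of $A\mapsto k_A$. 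The main obstacle I anticipate is the bookkeeping in step two: converting abstract continuity of $u\mapsto Au$ into pointwise-in-$(s,y)$ quantitative bounds on $k_A$ requires representing the evaluation functional $u\mapsto u(s,y)$ by an element of $\calK^{0,\gamma}_2(X^\wedge)$ (it is not continuous, so one must instead use a mollified version and track the $s$-dependent norm of the mollifier), and one must verify that the weight loss incurred is strictly less than $\eps/2$ so that the intersection over $\delta<\eps/2$ is genuinely achieved. Everything else is a routine, if tedious, translation between Fr\'echet seminorms.
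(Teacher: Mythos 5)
Your overall architecture (kernel exists; one-sided information from $A$ and from $A^*$; combine the two at the price of halving $\eps$) matches the paper's, but the two steps that carry all the weight are not actually established, and the mechanisms you propose for them would not deliver what is needed. First, the one-sided statement: what must be proved is not that the slices $k_A(\cdot,\cdot,s,y)$ lie in $\scrS^{\gamma^\prime}_\eps(X^\wedge)$ for each fixed $(s,y)$, but the integrated statement $k_A\in \scrS^{\gamma^\prime}_\eps(X^\wedge)\,\wh{\otimes}_\pi\,H$ (and symmetrically $k_A\in H\,\wh{\otimes}_\pi\,\scrS^{-\gamma}_\eps(X^\wedge)$ from $A^*$), i.e.\ square-integrability in $(s,y)$ of the $\scrS_\eps$-seminorms of the slices. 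Your route via mollified point evaluations only yields pointwise slice bounds, and the $\calK^{0,\gamma}_2$-norm of the mollifier blows up as it concentrates, with a rate that has nothing to do with $\eps$; there is no way to "verify that the weight loss incurred is strictly less than $\eps/2$" from this, because pointwise bounds with any loss do not recombine into the $L^2$-based mixed seminorms. The paper instead invokes a tensor-product representation theorem (continuity of $A$ into the Fr\'echet space $\scrS_\eps$, together with the Hilbert-space source, gives a kernel in $\scrS_\eps\,\wh{\otimes}_\pi\,H$; cf.\ the cited Proposition 4.2.9 of Krainer's thesis), which is exactly the integrated statement.

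Second, the passage from $\scrS_\eps\,\wh{\otimes}_\pi\,H\,\cap\,H\,\wh{\otimes}_\pi\,\scrS_\eps$ to $\scrS_{\eps/2}\,\wh{\otimes}_\pi\,\scrS_{\eps/2}$ is the real content of the halving of $\eps$, and you only assert it. The principle you appeal to --- that a distribution whose partial slices lie in the two factors lies in the completed tensor product --- is not a valid general statement, and nuclearity of the factors does not repair it; nor is the $\eps/2$ caused by the discrepancy between the measures $\tfrac{dt}{t}dx$ and $t^n\,dt\,dx$, which only shifts weights by a fixed amount. In the paper the loss arises from an explicit interpolation: each mixed seminorm $\|m_1m_2m_3m_4D_1D_2D_3D_4k_A\|$ is dominated, via the arithmetic--geometric mean inequality, by a sum of terms each carrying the full weight $m_i^{1/\lambda_i}$ in only one group of variables, and the exponents $\lambda_i, r_i$ can be chosen so that $\delta/(r_1\lambda_1)<\eps$ precisely because $\delta<\eps/2$; these terms are then finite by the two one-sided memberships. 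Without an argument of this kind (or an equivalent complex-interpolation argument between the two mixed-norm spaces), your proof does not close. The remaining ingredients of your plan --- closed graph theorem for continuity, localization, and the reduction to a model half-space situation --- are fine and agree with the paper.
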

\begin{proof}
Without loss of generality we may assume $\gamma=\gamma^\prime=0$. 
It suffices to show the existence of the 
kernel $k_A$ for any given $A$; the continuity of $A\mapsto k_A$ then follows from the closed graph theorem. 

Let $\phi_1,\ldots,\phi_N$ be a partion of unity on $X$. Considering these functions as constant in the variable $t$ 
we get a partition of unity of $X^\wedge$. Writing $A=\sum_{j,k}\phi_jA\phi_k$ it suffices to prove the following 
local version of the lemma$:$ Let $H=L_2(\rz_+\times\rz^n,t^ndtdx)$ and let $\mathcal{R}_\eps$ denote the space of all 
operators $A\in \scrL(H)$ such that the range of both $A$ and $A^*$ is contained in 
$\scrS^0_\eps(\rz_+\times\rz^n):=\scrS^0_\eps(\rz_+)\wh{\otimes}_\pi\scrS(\rz^n)$. Then $A$ has a kernel 
\begin{equation}\label{eq:ker01}
 k_A\in \scrS^0_{\eps/2}(\rz_+\times\rz^n)\,\wh{\otimes}_\pi\,\scrS^0_{\eps/2}(\rz_+\times\rz^n).
\end{equation}
Indeed, 
the  mapping properties and general results on tensor product representations, 
see \cite[Proposition 4.2.9]{Krai} 
imply that $A$ has a kernel  
\begin{equation}\label{eq:ker02}
 k_A\in\scrS^0_\eps(\rz_+\times\rz^n)\,\wh{\otimes}_\pi\,H\,\cap\,H\,\wh{\otimes}_\pi\,\scrS^0_\eps(\rz_+\times\rz^n).
\end{equation}
We will show that this space embeds into that in \eqref{eq:ker01}. To this end let us introduce the following 
notation$:$ For arbitrarily chosen integers $k,k^\prime,\ell,\ell^\prime,p,p^\prime, q, q^\prime$ and arbitrary $0\le\delta<\eps/2$ 
set 
 $$m_1(t)=t^{-\delta}\spk{t}^k, \quad m_2(s)=s^{-\delta}\spk{s}^{k^\prime},\quad 
     m_3(x)=\spk{x}^\ell,\quad m_4(y)=\spk{y}^{\ell^\prime}$$
and 
 $$D_1=(t\partial_t)^p,\quad D_2=(s\partial_s)^{p^\prime},\quad 
     D_3=\spk{D_x}^q,\quad D_4=\spk{D_y}^{q^\prime}.$$
We then have to show that $k_A = k_A(t,x,s,y)$ satisfies
\begin{equation}\label{eq:L2}
 m_1m_2m_3m_4D_1D_2D_3D_4 k_A\in L^2\Big(\rz_+\times \rz^n\times\rz_+\times \rz^n,s^nds\,t^ndt\,dx\,dy\Big).
\end{equation}
Let $\|\cdot\|$ denote the norm of this $L^2$-space and let $\lambda_1+\lambda_2+\lambda_3+\lambda_4=1$ 
with positive numbers $\lambda_i$. Then, by 
the inequality of arithmetic and geometric means and the triangle inequality,  
 $$\|m_1m_2m_3m_4D_1D_2D_3D_4 k_A\|\le \sum_{i=1}^4
     \|m_i^{1/\lambda_i}D_1D_2D_3D_4 k_A\|.$$
We check that each of the summands is finite. Let us consider the summand for $i=1$; the others are treated analogously. 
Recall that the Fourier 
transform induces an isometric isomorphism in $L^2(\rz^n)$, while the Mellin transform gives an isometric isomorphism from 
$L^2(\rz_+,t^ndt)$ to $L^2(\Gamma_{(n+1)/2})$, where $\Gamma_\gamma=\gamma+i\rz\cong \rz$ is a vertical line in 
the complex plane. Moreover, recall 
that $\spk{D_x}$ under the Fourier transform becomes multiplication by $\spk{\xi}$, while $t\partial_t$ under the 
Mellin transform becomes multiplication by $-z$. Therefore, with $r_1+r_2+r_3+r_4=1$, we can estimate 
\begin{align*}
 \|m_1^{1/\lambda_1}D_1D_2D_3D_4 k_A\|
  &\le \|m_1^{1/r_1\lambda_1}D_1k_A\|+ \sum_{i=2}^4\|D_1 D_i^{1/r_i}k_A\|\\
 &\le \|m_1^{1/r_1\lambda_1}D_1k_A\|+ \sum_{i=2}^4\Big(\|D_1^2k_A\|_{L^2}+\|D_i^{2/r_i}k_A\|\Big).
\end{align*}
Since $\delta<\eps/2$, we can choose  $\lambda_1\in(0,1/2)$ and $r_1\in(0,1)$ 
$\delta/r_1\lambda_1<\eps$.  
Then all terms on the right-hand side of the latter inequality are finite due to \eqref{eq:ker02}.  
\end{proof}

In the following proposition we shall employ operator-valued symbols as introduced in Section \ref{sec:twisted}. 
 
\forget{
\begin{proposition}\label{prop:green}
$a(\eta)$ belongs to $\mathcal{R}^\nu_G(\Sigma;\gamma,{\gamma^\prime})$ if and only if there exists an $\eps>0$ 
such that 
\begin{align*}
 a(\eta)\in & \mathop{\mbox{\Large$\cap$}}_{s,s^\prime,\rho,\rho^\prime\in\rz}
 S^\nu(\Sigma;\calK^{s,\gamma}_2(X^\wedge)^\rho,\calK^{s^\prime,\gamma^\prime+\eps}_2(X^\wedge)^{\rho^\prime}),\\
 a(\eta)^*\in & \mathop{\mbox{\Large$\cap$}}_{s,s^\prime,\rho,\rho^\prime\in\rz}
 S^\nu(\Sigma;\calK^{s^\prime,-{\gamma^\prime}}_{2}(X^\wedge)^{\rho^\prime},
   \calK^{s,-\gamma+\eps}_{2}(X^\wedge)^\rho),
\end{align*}
where the $($pointwise$)$ adjoint refers to the pairing$($s$)$ induced by the inner product of $\calK^{0,0}_2(X^\wedge)$. 
\end{proposition}
}
\begin{proposition}\label{prop:green} We have 
$a\in\mathcal{R}^\nu_G(\Sigma;\gamma,{\gamma^\prime})$ if, and only if, there exists an $\eps>0$ 
such that 
\begin{align*}
 a\in S^\nu(\Sigma;\calK^{0,\gamma}_2(X^\wedge),\scrS^{\gamma^\prime}_\eps(X^\wedge)),\qquad
 a^*\in   S^\nu(\Sigma;\calK^{0,-{\gamma^\prime}}_{2}(X^\wedge), \scrS^{-\gamma}_\eps(X^\wedge)),
\end{align*}
where the pointwise adjoint refers to the pairings induced by the inner product of $\calK^{0,0}_2(X^\wedge)$. 
\end{proposition}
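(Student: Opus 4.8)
The plan is to unwind both the definition of the symbol classes $S^\nu(\Sigma;E,F)$ from Section \ref{sec:twisted} and the definition of $\mathcal{R}^\nu_G(\Sigma;\gamma,\gamma')$, and to reduce the equivalence to a pointwise statement about the integral kernels, using Lemma \ref{lem:green} as the bridge between ``Green operator'' and ``operator with nice mapping properties and a Schwartz kernel''. First I would treat the easy direction: if $a\in\mathcal{R}^\nu_G(\Sigma;\gamma,\gamma')$, then by definition $(a(\eta)u)(t,x)=[\eta]^{n+1}\int k_a(\eta,t[\eta],x,s[\eta],y)u(s,y)\,s^n\,ds\,dy$ with $k_a\in S^\nu(\Sigma,\scrS^{\gamma',-\gamma}_\eps(X^\wedge\times X^\wedge))$. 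For fixed $\eta$ one checks by direct estimates on the kernel that $a(\eta)$ maps $\calK^{0,\gamma}_2(X^\wedge)$ continuously into $\scrS^{\gamma'}_\eps(X^\wedge)$, and similarly for the adjoint; this is essentially the content of Lemma \ref{lem:green} read backwards, together with the explicit change of variables $t\mapsto t[\eta]$. The symbol estimates in $\eta$ then follow: applying $D^\alpha_\eta$ to $a(\eta)$ produces, after the substitution $t'=t[\eta]$, $s'=s[\eta]$ and conjugation by $\kappa_{[\eta]}$ (which is exactly the normalization built into the definition of $S^\nu(\Sigma;E,F)$), a kernel that is a finite sum of terms, each controlled by $\spk{\eta}^{\nu-|\alpha|}$ times a continuous seminorm of $k_a$ in $\scrS^{\gamma',-\gamma}_\eps$. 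The key point is that the factor $[\eta]^{n+1}$ and the dilations $t\mapsto t[\eta]$ conspire so that conjugation by $\kappa_{\spk{\eta}}$ removes all the $\eta$-dependence coming from the arguments of $k_a$, leaving only the $S^\nu(\Sigma,\cdot)$-symbol behavior in $\eta$ and derivatives of $[\eta]$ (which are $O(\spk{\eta}^{1-|\beta|})$).

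For the converse, suppose $a\in S^\nu(\Sigma;\calK^{0,\gamma}_2(X^\wedge),\scrS^{\gamma'}_\eps(X^\wedge))$ with $a^*\in S^\nu(\Sigma;\calK^{0,-\gamma'}_2(X^\wedge),\scrS^{-\gamma}_\eps(X^\wedge))$. For each fixed $\eta$, the normalized operator $b(\eta):=\kappa_{\spk{\eta}}^{-1}a(\eta)\kappa_{\spk{\eta}}$ lies in the space $\mathcal{R}_G(\gamma,\gamma')_{\eps'}$ of Lemma \ref{lem:green} (for a slightly smaller $\eps'$, or one absorbs the $\kappa$-factors into the weights as in the proof of that lemma), so it has a kernel $k_{b(\eta)}\in\scrS^{\gamma',-\gamma}_{\eps'/2}(X^\wedge\times X^\wedge)$, and by the continuity statement in Lemma \ref{lem:green} the map $\eta\mapsto k_{b(\eta)}$ inherits smoothness and, from the symbol estimates on $a$ and $a^*$, the bound $p(D^\alpha_\eta k_{b(\eta)})\le C\spk{\eta}^{\nu-|\alpha|}$ for every continuous seminorm $p$ on $\scrS^{\gamma',-\gamma}_{\eps'/2}$. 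Undoing the conjugation by $\kappa_{\spk{\eta}}$ then exhibits $a(\eta)$ in exactly the integral-operator form of the definition of $\mathcal{R}^\nu_G(\Sigma;\gamma,\gamma')$ with $k_a(\eta,t,x,s,y)$ built from $k_{b(\eta)}$ by rescaling the first and third variables by $[\eta]$ and $\spk{\eta}$ (using that $[\eta]=|\eta|$ for $|\eta|\ge1$, and patching near $\eta=0$ where everything is harmless). One must check that differentiating in $\eta$ commutes appropriately with these rescalings up to the claimed symbol orders — routine but needs the chain rule applied to $[\eta]$ and the estimate $|D^\beta_\eta[\eta]|\le C\spk{\eta}^{1-|\beta|}$.

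The main obstacle I anticipate is the bookkeeping around the two $\kappa$-actions and the factor $[\eta]^{n+1}$: one has to verify that the operator-valued symbol estimate, which is stated in terms of $\|\kappa_{\spk{\eta}}^{-1}(D^\alpha_\eta a(\eta))\kappa_{\spk{\eta}}\|$, translates precisely into the kernel estimate $k_a\in S^\nu(\Sigma,\scrS^{\gamma',-\gamma}_\eps)$ with no loss in $\nu$ or $\eps$, and in particular that the dilation $\kappa_{\spk{\eta}}$ applied to a function in $\scrS^{\gamma'}_\eps(X^\wedge)$ lands back in (a weighted version of) the same space with norms growing at most polynomially in $\spk{\eta}$ — this is where the $\spk{t}$-weights in the definition of $\scrS^{\gamma'}_\eps$ and the difference between $[\eta]$ and $\spk{\eta}$ have to be handled carefully. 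A secondary subtlety is the passage from a pointwise-in-$\eta$ kernel (supplied by Lemma \ref{lem:green}) to a jointly smooth symbol-type kernel: smoothness in $\eta$ follows because $\eta\mapsto a(\eta)$ is smooth with values in $\scrL(\calK^{0,\gamma}_2,\scrS^{\gamma'}_\eps)$ and $A\mapsto k_A$ is continuous and linear, hence the composite is smooth, and difference quotients of $a$ converge in the relevant operator topology, so differentiation under the kernel map is legitimate.
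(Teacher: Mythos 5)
Your proposal is correct and follows essentially the same route as the paper: the forward direction by direct kernel estimates, and the converse by applying Lemma \ref{lem:green} pointwise to the conjugated families $\kappa_{\spk{\eta}}^{-1}\big(D^\beta_\eta a(\eta)\big)\kappa_{\spk{\eta}}$ and then reconciling the derivative of the rescaled kernel with the rescaled kernel of the derivative via the chain rule on $[\eta]$. The step you label ``routine'' is precisely the paper's inductive identity expressing $D^\alpha_\eta$ of the normalized kernel through terms $p_m(\eta)\big((n+1)+(t\partial_t)+(s\partial_s)\big)^\ell\wt{k}_\beta(\eta)$ with $p_m\in S^{-m}$, which closes because $(n+1)+(t\partial_t)+(s\partial_s)$ acts continuously on $\scrS^{\gamma^\prime,-\gamma}_{\eps/2}(X^\wedge\times X^\wedge)$.
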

\begin{proof}
It is easy to show that every $a\in\mathcal{R}^\nu_G(\Sigma;\gamma,{\gamma^\prime})$ has the stated properties. 

Thus let us assume that $a$ and $a^*$ are as described with some $\eps>0$. 
Due to Lemma \ref{lem:green}, $a(\eta)$ has an integral kernel 
$k(\eta)=k(\eta;t,x,s,y)$ belonging to 
$\scrC^\infty\big(\rz^q,\scrS^{\gamma,\gamma^\prime}_{\eps/2}(X^\wedge\times X^\wedge)\big)$. 
The same is then true for the kernel 
\begin{align*}
 \wt{k}(\eta;t,x,s,y)=[\eta]^{-(n+1)}k(\eta;t[\eta]^{-1},x,s[\eta]^{-1},y),
\end{align*}
of $\kappa^{-1}(\eta)a(\eta)\kappa(\eta)$. We will verify that 
$\wt{k}$ belongs to $S^\nu\big(\rz^q;\scrS^{\gamma,\gamma^\prime}_{\eps/2}(X^\wedge\times X^\wedge)\big)$.

Consider $\wt{a}_\beta(\eta):=\kappa^{-1}(\eta)D^{\beta}a(\eta)\kappa(\eta)$ for arbitrary $\beta$. 
Since $\spk{\eta}^{|\beta|-\nu}\wt{a}_\beta(\eta)$ is uniformly bounded in $\mathcal{R}_G(\gamma,\gamma^\prime)_\eps$, 
it follows from Lemma \ref{lem:green} that the associated kernel 
$\spk{\eta}^{|\beta|-\nu}\wt{k}_\beta(\eta)$ is uniformly bounded in 
$\scrS^{\gamma,\gamma^\prime}_{\eps/2}(X^\wedge\times X^\wedge)$.
Since the kernel $k_\beta(\eta)$ of 
$D^\beta a(\eta)$ is 
\begin{align*}
 k_\beta(\eta;t,x,s,y)=[\eta]^{n+1}\wt{k}_\beta(\eta;t[\eta],x,s[\eta],y), 
\end{align*}
a straightforward calculation shows that 
 $$\wt{k}_{\beta+e_j}(\eta)=D_{\eta_j}\wt{k}_\beta(\eta)+\frac{D_{\eta_j}[\eta]}{[\eta]}
    \big((n+1)+(t\partial_t)+(s\partial_s)\big)\wt{k}_\beta(\eta).$$
Thus, by induction, 
$D^\alpha\wt{k}(\eta)$ is a finite linear combination of terms of the form 
 $$p_{m}(\eta)\big((n+1)+(t\partial_t)+(s\partial_s)\big)^\ell\wt{k}_\beta(\eta),\qquad \beta\le\alpha,
     \quad m+|\beta|=|\alpha|,
     \quad\ell\in\nz,$$
with symbols $p_m(\eta)\in S^{-m}(\rz^q)$. Since $(n+1)+(t\partial_t)+(s\partial_s)$ is a continuous operator 
in $\scrS^{\gamma,\gamma^\prime}_{\eps/2}(X^\wedge\times X^\wedge)$, it follows that 
$\spk{\eta}^{|\alpha|-\nu}D^\alpha\wt{k}(\eta)$
is unifomly bounded in $\scrS^{\gamma,\gamma^\prime}_{\eps/2}(X^\wedge\times X^\wedge)$. 
\end{proof}

\subsection{The resolvent construction}\label{sec:03.2}

We shall prove the following theorem:

\begin{theorem}\label{thm:resolvent}
Let $\ulA$ be parameter-elliptic with respect to $\Lambda$, i.e., satisfy conditions $\mathrm{(E1)}$, $\mathrm{(E2)}$ 
and $\mathrm{(E3)}$. 
Then there exists a constant $c\ge 0$ such that 
\begin{equation}\label{eq:schrohe1}
 \ulA+c: \calH^{s+\mu,\gamma+\mu}_p(\bz)\oplus\omega\ulE \lra \calH^{s,\gamma}_p(\bz)
\end{equation}
 has no spectrum in $\Lambda$. Moreover, we then have 
\begin{equation}\label{eq:schrohe2}
 (\eta^\mu-({\ulA}+c))^{-1}\in C^{-\mu}_O(\Sigma)+\calC^{-\mu}_G(\Sigma;\gamma,\gamma),\qquad 
\eta\in\Sigma.
\end{equation}
\end{theorem}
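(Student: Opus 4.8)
The plan is to build an approximate resolvent (parametrix) for $\eta^\mu - (\ulA + c)$ within the parameter-dependent cone calculus $C^{-\mu}_O(\Sigma) + \calC^{-\mu}_G(\Sigma;\gamma,\gamma)$, and then to correct it to an exact inverse using a Neumann-series argument, absorbing the error into the Green ideal. First I would use condition (E1) to construct, by the standard symbolic calculus of parameter-dependent pseudodifferential operators on $\mathrm{int}\,\bz$ glued with a parameter-dependent Mellin pseudodifferential calculus near $t=0$, a first parametrix $B_0(\eta) \in C^{-\mu}_O(\Sigma)$ such that $(\eta^\mu - A)B_0(\eta) = 1 - G_1(\eta)$ and $B_0(\eta)(\eta^\mu - A) = 1 - G_2(\eta)$ with remainders $G_j(\eta)$ that are smoothing in the interior and improve the conormal order; here (E1), i.e.\ invertibility of $\eta^\mu - \sigma^\mu_\psi(A)$ and of $\eta^\mu - \wt\sigma^\mu_\psi(A)$ in $\Sigma$, is exactly what makes the leading Mellin symbol $\eta^\mu - \sigma^\mu_M(A)(z)$ and the interior symbol invertible for $|\eta|$ large. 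Next, (E2) — invertibility of $\sigma^\mu_M(A)$ on the two weight lines $\re z = \tfrac{n+1}{2}-\gamma-\mu$ and $\re z = \tfrac{n+1}{2}-\gamma$ — lets me refine $B_0$ by a parameter-dependent Mellin smoothing operator so that $G_j(\eta)$ become flat Green-type remainders relative to the weight data $(\gamma+\mu,\gamma)$, bounded on the correct $\calK$- and $\calH$-scales.

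The crucial point is the domain $\omega\ulE$: the parametrix $B_0$ is built for $A$ on the \emph{minimal} domain, so I must correct for the finite-dimensional extra piece $\omega\ulE$ and simultaneously for the mismatch between $A$ near $t=0$ and its model cone operator $\wh A$. Here condition (E3) enters: $\Lambda$ being a sector of minimal growth for $\ulwhA$ in $\calK^{0,\gamma}_2(X^\wedge)$ means $(\eta^\mu - \ulwhA)^{-1}$ exists for large $\eta$ and decays like $|\eta|^{-\mu}$; by the one-to-one correspondence $\Theta$ from Section \ref{sec:02.2} the extension data $\ulwhE = \Theta^{-1}\ulE$ are precisely matched. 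The key step, which I expect to be the main obstacle, is to show that the difference $(\eta^\mu - \ulwhA)^{-1} - \wh B_0(\eta)$ — i.e.\ the resolvent of the model cone operator minus the Mellin parametrix — belongs to $\calC^{-\mu}_G(\Sigma;\gamma,\gamma)$ on $X^\wedge$; this is essentially the heart of the theorem. To do this I would verify the mapping-property characterization of Proposition \ref{prop:green}: that the operator family and its formal adjoint map $\calK^{0,\gamma}_2(X^\wedge) \to \scrS^{\gamma}_\eps(X^\wedge)$ (resp.\ $\calK^{0,-\gamma}_2 \to \scrS^{-\gamma}_\eps$) with the correct symbol estimates in $\eta$. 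The range being in $\scrS^\gamma_\eps$ follows from elliptic regularity for $\wh A$ on the cone (the resolvent of $\ulwhA$ applied to anything lands in the domain, and elliptic regularity in the cone algebra, plus (E1)-type ellipticity at $t=+\infty$, pushes smoothness and the weight gain $\eps$); the symbol estimates in $\eta$ come from the homogeneity/dilation structure — conjugating by $\kappa_{[\eta]}$ turns $\eta$-derivatives of the resolvent into lower-order terms exactly as in the computation at the end of the proof of Proposition \ref{prop:green}, and (E3) together with the interpolation estimate \eqref{eq:interpolation} gives the base decay rate $|\eta|^{-\mu}$.

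Having established that $\wh B_1(\eta) := \wh B_0(\eta) + [\,(\eta^\mu-\ulwhA)^{-1} - \wh B_0(\eta)\,] = (\eta^\mu-\ulwhA)^{-1}$ lies in $C^{-\mu}_O(\Sigma) + \calC^{-\mu}_G(\Sigma;\gamma,\gamma)$ on $X^\wedge$, I glue: set $B_1(\eta) = \omega_1 \wh B_1(\eta) \omega_0 + (1-\omega_2) B_0(\eta)(1-\omega_3)$ with nested cut-offs, which is a candidate parametrix for $\eta^\mu - (\ulA+c)$ with domain $\calH^{s+\mu,\gamma+\mu}_p(\bz) \oplus \omega\ulE$. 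The commutators $[\omega_j, A]$ are supported away from $t=0$ and of lower order, hence produce remainders in the Green ideal and in the smoothing residual class; so $(\eta^\mu-(\ulA+c))B_1(\eta) = 1 - G(\eta)$ with $G(\eta) \in \calC^{0}_G(\Sigma;\gamma,\gamma)$ and, by choosing $c$ (equivalently the truncation radius $R$) large, $\|G(\eta)\|_{\scrL(\calH^{s,\gamma}_p)} < 1$ uniformly on $\Sigma$ — this uses that Green operators of order $0$ have operator norm $O(1)$ and in fact $o(1)$ as $|\eta|\to\infty$ because their symbols have a positive weight gain $\eps$. A Neumann series then gives $(\eta^\mu-(\ulA+c))^{-1} = B_1(\eta)(1-G(\eta))^{-1} = B_1(\eta) + B_1(\eta)\sum_{k\ge1}G(\eta)^k$, and since $\calC^{-\mu}_G(\Sigma;\gamma,\gamma)$ and $C^{-\mu}_O(\Sigma)$ are closed under composition with $\calC^0_G$ from the right and the Green class is an ideal, the correction term lies in $\calC^{-\mu}_G(\Sigma;\gamma,\gamma)$; an analogous left parametrix argument shows this two-sided inverse is the genuine resolvent, and that for large $|\eta|$ there is no spectrum, i.e.\ \eqref{eq:schrohe1} and \eqref{eq:schrohe2} hold. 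The main obstacle, to repeat, is the cone-side identification of the model resolvent modulo the Green ideal; everything else is bookkeeping with the calculus and the cut-off gluing, together with the standard Agmon-type trick of raising $c$ to kill the remainder.
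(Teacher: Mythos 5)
There is a genuine gap at the point you yourself identify as the heart of the matter, and it is compounded by the gluing step. Your glued parametrix $\omega_1(\eta^\mu-\ulwhA)^{-1}\omega_0+(1-\omega_2)B_0(\eta)(1-\omega_3)$ treats the cut-off commutators as the only error near $t=0$, but the dominant error there is $(A-\wh{A})$ applied to the range of the model-cone resolvent, and this range contains the finite-dimensional singular part $\omega\ulwhE$. Since $\ulE\neq\ulwhE$ in general (Theorem \ref{thm:theta}: the elements of $\ulE$ carry the correction terms $G^{(\ell)}_\sigma$, $\ell\ge1$, precisely so that $A$ maps them back into $\calH^{s,\gamma}_p(\bz)$), the operator $(A-\wh{A})=t^{-\mu+1}\sum_k\tilde a_k(t)(-t\partial_t)^k$ applied to functions $t^{-\sigma}\log^k t\in\ulwhE$ with $\re\sigma$ close to $\frac{n+1}{2}-\gamma$ produces terms of type $t^{-\sigma-\mu+1}\log^j t$, which for $\mu\ge2$ need not lie in $\calK^{0,\gamma}$ at all; so your remainder $G(\eta)$ is not a Green family and need not even be bounded on $\calH^{0,\gamma}_2(\bz)$ unless you build the finite-rank $\Theta$-corrections into the parametrix. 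Invoking the correspondence $\ulwhE=\Theta^{-1}\ulE$ "matching the data" does not by itself repair this; handling it is exactly the hard part that \cite{SchrSe} could only do under $t$-independent coefficients and dilation-invariant domains. Moreover, your central claim that $(\eta^\mu-\ulwhA)^{-1}-\wh{B}_0(\eta)\in\calC^{-\mu}_G(\Sigma;\gamma,\gamma)$ is only sketched (elliptic regularity plus (E3) gives mapping and decay into the domain, but not the weight gain $\eps$ and the $\scrS^{\gamma}_\eps$-range needed for Proposition \ref{prop:green}, nor the adjoint condition, for which the second weight line in (E2) would have to enter somewhere — it never does in your argument). Finally, adding a constant $c$ does not make a fixed order-zero Green remainder small; smallness can only come from largeness of $|\eta|$, and then invertibility on the remaining compact parameter set must be obtained separately.

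The paper's proof avoids all of this by a different route: it quotes Theorem 6.9 of \cite{GKM2} for the existence of $(\eta^\mu-\ulA)^{-1}$ and the bound $\|(\eta^\mu-\ulA)^{-1}\|\le C\spk{\eta}^{-\mu}$ on $\calH^{0,0}_2(\bz)$ (so no Neumann series and no structure result for the model-cone resolvent is needed), constructs parametrices $B_1(\eta)$ of $\eta^\mu-A$ and $B_2(\eta)$ of the formal adjoint (this is where the line $\re z=\frac{n+1}{2}-\gamma$ in (E2) is used), and writes $\ulA(\eta)^{-1}=B_1(\eta)+B_2(\overline\eta)^*\Pi_1(\eta)+\Pi_2(\overline\eta)^*\ulA(\eta)^{-1}\Pi_1(\eta)$. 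The last term is sandwiched between Green remainders, so only the norm estimates for $\ulA(\eta)^{-1}$ and the mapping characterization of Green families (Lemma \ref{lem:green}, Proposition \ref{prop:green}) are needed to place it in $\calC^{-\mu}_G(\Sigma;\gamma,\gamma)$. If you want to salvage your approach, you would essentially have to reprove the GKM2 resolvent theorem together with a structural description of $(\eta^\mu-\ulwhA)^{-1}$, including the finite-rank domain corrections; that is a much heavier task than the sandwich argument the paper uses.
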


It is sufficient to consider the case $s=0$ and $p=2\colon$ 
According to Proposition \ref{prop:independent} the invertibility of \eqref{eq:schrohe1} does not 
depend on $s$ and $p$. Assume that 
$ (\eta^\mu-({\ulA}+c))^{-1}=B(\eta)+G(\eta)$ as in \eqref{eq:schrohe2} for $s=0$ and $p=2$. 
For fixed $\eta$ it is shown in Theorem 3.4 of \cite{SchrSe} that the inverse can be written as $B^\prime+G^\prime$, 
where  $B^\prime$ extends continuosly to mappings $\calH^{s,\gamma}_p(\bz)\to \calH^{s+\mu,\gamma+\mu}_p(\bz)$ 
for every $p$, and $G^\prime$ extends to maps $\calH^{s,\gamma}_p(\bz)\to\omega\ulE$. It follows that 
$B(\eta)+G(\eta)=B'+G'$ induces maps $\calH^{s,\gamma}_p(\bz)\to\calH^{s+\mu,\gamma+\mu}_p(\bz)\oplus\omega\ulE$ 
for every $s$ and $p$. 
Since $\scrC^{\infty}_0(\bz)$ is dense in $\calH^{s,\gamma}_p(\bz)$ and $\calH^{s+\mu,\gamma+\mu}_p(\bz)$ 
we see that $B(\eta)+G(\eta)$ induces the inverse for arbitrary $s$ and $p$.

Next let us justify that it is also enough to verify the above theorem in  case  $\gamma=0$. 
In fact, assume that $A$ satisfies the ellipticity conditions for some $\gamma$. 
Let $t$ denote simultaneously a boundary defining function for $\bz$ and the variable $t\in\rz$. 
Define $A_\gamma=t^{-\gamma}At^\gamma$ with domain $\scrD(\ulA_\gamma)=t^{-\gamma}\scrD(\ulA)$. 
We argue that $\ulA_\gamma$ satisfies the ellipticity conditions for the weight $\gamma=0$. 
Conditions $($E$1)$ and $($E$2)$ are easily verified.  For $($E$3)$ observe that
 $$(\lambda-\ulwhA_\gamma)^{-1}=t^{-\gamma}(\lambda-\ulwhA)^{-1}t^{\gamma}.$$
So the estimate of $(\lambda-\ulwhA_\gamma)^{-1}$ in $\calK^{0,0}_2(X^\wedge)^\rho$ is equivalent to that 
of $(\lambda-\ulwhA)^{-1}$  in $\calK^{0,0}_2(X^\wedge)^{\rho+\gamma}$. Since we have shown that condition 
$($E$3)$ for $A$ is satisfied not only for $\rho=0$ but for every choice of $\rho\in\rz$, this is also true for $A_\gamma$. 
Hence $\ulA_\gamma$ satisfies the assumptions of Theorem \ref{thm:resolvent} for the weight $0$. 
Provided the theorem is true in this case, we find that 
 $$(\lambda-\ulA)^{-1}=t^{\gamma}(\lambda-\ulA_\gamma)^{-1}t^{-\gamma}$$
has the structure stated in the theorem for the weight $\gamma$.

\begin{proof}[Proof of Theorem \textnormal{\ref{thm:resolvent}} in case $p=2$ and $s=\gamma=0$]
For convenience of notation we set $A(\eta)=\eta^\mu-A$, $\ulA(\eta)=\eta^\mu-\ulA$ and similarly for 
the model cone operator. 

By Theorem 6.9 of \cite{GKM2} we know that $\ulA(\eta)^{-1}$ exists for  
$\eta\in\Sigma$ of sufficiently large modulus $|\eta|\ge c$ and that 
 $$\|\ulA(\eta)^{-1}\|_{\scrL(\calH^{0,0}_2(\bz))}\le C\spk{\eta}^{-\mu},\qquad |\eta|\ge c.$$
Hence the resolvent of ${\ulA}_c=\ulA+c$ exists for all $\eta\in\Sigma$ and satisfies the norm estimate in the whole 
sector $\Sigma$. Now we may assume without loss of generality that $c=0$, otherwise we rename 
$A_c$ by $A$ again. 

Consider $A(\eta)$ as an element of $C^\mu(\Sigma;\mu,0,k)$, the space of parameter-dependent cone 
pseudodifferential operators of order $\mu$, with an arbitrary fixed integer $0<k\le \mu$. The ellipticity assumptions 
(E1) and (E2) allow us to construct a parametrix 
$B_1(\eta)$ modulo Green operators of order $0$, i.e., 
 $$\Pi_1(\eta):=1-A(\eta)B_1(\eta)\in C^0_G(\Sigma;0,0,k).$$
$B_1(\eta)$ belongs to $C^{-\mu}(\Sigma;0,\mu,k)$. In particular, $B_1(\eta)$ also belongs to 
$C^{-\mu}_O(\Sigma)+\calC^{-\mu}_G(\Sigma;0,0)$. 
Moreover, 
$B_1(\eta)$ maps $\calH^{0,0}_2(\bz)$ into $\calH^{\mu,\mu}_2(\bz)=\scrD(A_{\min})\subset\scrD(\ulA)$. 
Hence  $A(\eta)B_1(\eta)=\ulA(\eta)B_1(\eta)$ on $\calH^{0,0}_2(\bz)$ and we can write 
\begin{equation}\label{eq:inv1}
 \ulA(\eta)^{-1}=B_1(\eta)+\ulA(\eta)^{-1}\Pi_1(\eta),\qquad \eta\in\Sigma. 
\end{equation}

Denote by $A^t$ the formal adjoint of $A$. The adjoint ${\ulA}^*$ of $\ulA$ coincides with some closed 
extension of $A^t$ which we denote by $\underline{A^t}$. Obviously 
 $$(\eta^\mu-\underline{A^t})^{-1}=[(\overline{\eta}^\mu-\ulA)^{-1}]^*
     =[\ulA(\overline{\eta})^{-1}]^*,\qquad \eta\in\overline{\Sigma}.$$
The ellipticity condition (E1) remains true for $A^t$. The conormal symbol of $A^t$ is given by 
$f_0(n+1-\mu-\overline{z})^t$, where the formal adjoint refers to the inner-product of $L^2(X)$. 
Hence $A^t$ satisfies (E2) with $\gamma=0$. As above, we thus find a parametrix 
$B_2(\eta)\in C^{-\mu}(\overline{\Sigma};0,\mu,k)$ such that 
 $$\Pi_2(\eta):=1-(\overline{\eta}^\mu-A^t)B_2(\eta)\in C^0_G(\overline{\Sigma};0,0,k)$$
and write 
 $$(\overline{\eta}^\mu-\underline{A^t})^{-1}=B_2(\eta)
     +(\overline{\eta}^\mu-\underline{A^t})^{-1}\Pi_2(\eta).$$
By passing to the adjoint we thus obtain 
\begin{equation*}
 \ulA(\eta)^{-1}=B_2(\overline{\eta})^*+\Pi_2(\overline{\eta})^*\ulA(\eta)^{-1},\qquad \eta\in\Sigma. 
\end{equation*}
Inserting this expression on the right-hand side of \eqref{eq:inv1} results in the formula 
\begin{equation}\label{eq:inv2}
 \ulA(\eta)^{-1}=B_1(\eta)+B_2(\overline{\eta})^*\Pi_1(\eta)+\Pi_2(\overline{\eta})^*\ulA(\eta)^{-1}\Pi_1(\eta),\qquad \eta\in\Sigma. 
\end{equation}
Since $B_2(\overline{\eta})^*\in C^{-\mu}(\Sigma;-\mu,0,k)\subset C^{-\mu}(\Sigma;0,0,k)$ and 
Green operators form an ideal in the parameter-dependent cone algebra, 
$B_2(\overline{\eta})^*\Pi_1(\eta)\in C^{-\mu}_G(\Sigma;0,0,k)\subset \calC^{-\mu}_G(\Sigma;0,0)$. It remains to verify that 
\begin{equation}\label{eq:inv3}
 g(\eta):=\Pi_2(\overline{\eta})^*\ulA(\eta)^{-1}\Pi_1(\eta)\in \calC^{-\mu}_G(\Sigma;0,0).
\end{equation}
To this end, first observe that 
 $$\big\|D^\alpha_\eta \ulA(\eta)^{-1}\big\|_{\scrL(\calH^{0,0}_2(\bz))}\le 
     C_\alpha\spk{\eta}^{-\mu-|\alpha|},\qquad\alpha\in\nz_0^2;$$
in fact, this follows from the above resolvent estimate and the fact that 
$D^\alpha_\eta \ulA(\eta)^{-1}$, $|\alpha|\ge1$, is a finite-linear combination of terms of the form 
$p_{k,\ell}(\eta)\ulA(\eta)^{-1-\ell}$ with polynomials $p_{k,\ell}$ of degree at most $(\mu-1)\ell-k$ and 
$k+\ell=|\alpha|$. 
It follows easily that $\omega\,\ulA(\eta)^{-1}\,\omega$ belongs to 
$S^{-\mu}(\Sigma;\calK^{0,0}_2(X^\wedge),\calK^{0,0}_2(X^\wedge))$ for every 
cut-off function $\omega\in\scrC^\infty([0,1))$ $($note that the group action $\kappa_\lambda$ is 
unitary on $\calK^{0,0}_2(X^\wedge))$. 

Both $\Pi_1(\eta)$ and $\Pi_2(\overline{\eta})^*$ belong to $\calC^{0}_G(\Sigma;0,0)$. 
Hence, composing these 
operator-families with the operator of multiplication by $1-\omega^2$ $($understood as a 
function on $\bz$, supported away from the boundary$)$, both from the left or the right, yields 
functions belonging to $\scrS(\Sigma,\scrC^{\infty,0,0}_{\eps}(\bz\times\bz))$ for some $\eps>0$. 
It follows that $g(\eta)$ differs by such an error term from $\omega\,a(\eta)\,\omega$, where 
 $$a(\eta):=\omega\,\Pi_2(\overline{\eta})^*\,\omega^2\,\ulA(\eta)^{-1}\,\omega^2\,\Pi_1(\eta)\,\omega.$$
Both $\omega\,\Pi_1(\eta)\,\omega$ and $\omega\,\Pi_2(\overline{\eta})^*\,\omega$, considered as a family of operators on 
$X^\wedge$, belong to $\mathcal{R}^0_G(\Sigma;0,0)$. 
It follows that $a(\eta)\in S^{-\mu}(\Sigma;\calK^{0,0}_2(X^\wedge),\scrS^{0}_\eps(X^\wedge))$ 
for some $\eps>0$. Arguing in the same way for $a(\eta)^*$, we conclude from Proposition \ref{prop:green} 
that $a(\eta)\in\mathcal{R}^{-\mu}_G(\Sigma;0,0)$ and thus obtain \eqref{eq:inv3}. 
\end{proof}

\section{Resolvent estimates and bounded $H_\infty$-calculus}\label{sec:04}
We continue considering the extension $\underline A$  in $\calH^{s,\gamma}_p(\bz)$ with domain 
$$\scrD(\underline A) = 
\calH^{s+\mu,\gamma+\mu}_p(\bz)\oplus \omega\ulE$$
for a fixed space $\ulE$.

\begin{theorem}\label{thm:normest}
Let $\ulA$ be  parameter-elliptic. Then, for every $s\ge0$ and $1<p<+\infty$,   
 $$\|(\eta^\mu-{\ulA})^{-1}\|_{\scrL(\calH^{s,\gamma}_p(\bz))}\le C_{s,p}\spk{\eta}^{-\mu},
      \qquad  \eta\in\Sigma,\;|\eta|\ge c,$$
with suitable constants $c$ and $C_{s,p}$; $c$ is independent of $s$ and $p$.  
\end{theorem}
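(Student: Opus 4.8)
The plan is to read the estimate off the structure of the resolvent provided by Theorem~\ref{thm:resolvent}, using only the $s$- and $p$-independent mapping properties of the parameter-dependent classes $C^{-\mu}_O(\Sigma)$ and $\calC^{-\mu}_G(\Sigma;\gamma,\gamma)$ on the scale $\calH^{s,\gamma}_p(\bz)$.

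First I would pin down the constant. By Theorem~6.9 of \cite{GKM2} — as invoked in the proof of Theorem~\ref{thm:resolvent} — the operator $\eta^\mu-\ulA$ is invertible on $\calH^{0,\gamma}_2(\bz)$ for $\eta\in\Sigma$, $|\eta|\ge c$, with a constant $c$ fixed in the Hilbert-space setting and hence independent of $s$ and $p$; by Proposition~\ref{prop:independent} the invertibility then holds on every $\calH^{s,\gamma}_p(\bz)$ for the same $\eta$. On the truncated sector $\{\eta\in\Sigma:|\eta|\ge c\}$ the resolvent construction in the proof of Theorem~\ref{thm:resolvent} applies directly to $\ulA$ — there the shift by $c$ only serves to cover small $|\eta|$ as well — so that
$$(\eta^\mu-\ulA)^{-1}=B(\eta)+G(\eta),\qquad B\in C^{-\mu}_O(\Sigma),\quad G\in\calC^{-\mu}_G(\Sigma;\gamma,\gamma),$$
for $\eta\in\Sigma$ with $|\eta|\ge c$.

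The main step is then to quote the continuity of the two classes. Being a parameter-dependent cone pseudodifferential operator of order $-\mu$, $B(\eta)$ maps $\calH^{s,\gamma}_p(\bz)$ boundedly into $\calH^{s+\mu,\gamma+\mu}_p(\bz)$, and, as for any such operator of order $-\mu$, $\|B(\eta)\|_{\scrL(\calH^{s,\gamma}_p(\bz))}\le C_{s,p}\spk{\eta}^{-\mu}$ for $\eta\in\Sigma$. Likewise $G(\eta)$ maps $\calH^{0,\gamma}_p(\bz)$ continuously into $\scrC^{\infty,\gamma}_\eps(\bz)\hookrightarrow\calH^{s,\gamma}_p(\bz)$ for some $\eps>0$ and every $s$, with $\|G(\eta)\|_{\scrL(\calH^{s,\gamma}_p(\bz))}\le C_{s,p}\spk{\eta}^{-\mu}$; these are the standard mapping properties of Schulze's calculus recorded in the appendix of \cite{SchrSe}, and for the Green part one may also argue through Lemma~\ref{lem:green} and Proposition~\ref{prop:green}. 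Adding the two bounds gives $\|(\eta^\mu-\ulA)^{-1}\|_{\scrL(\calH^{s,\gamma}_p(\bz))}\le C_{s,p}\spk{\eta}^{-\mu}$ for $|\eta|\ge c$, as asserted.

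I expect the genuine difficulty to lie entirely in Theorem~\ref{thm:resolvent}: once the resolvent is identified as an element of the parameter-dependent cone calculus plus (non-classical) Green operators, the present statement is a formal consequence, because those mapping properties are uniform in $s$ and $p$. The point to be careful about is that the truncation radius $c$ is independent of $s$ and $p$, and I would secure this by arguing at the level of the calculus — membership of $(\eta^\mu-\ulA)^{-1}$ in $C^{-\mu}_O(\Sigma)+\calC^{-\mu}_G(\Sigma;\gamma,\gamma)$ over $\{|\eta|\ge c\}$, with $c$ dictated by (E3) and Proposition~\ref{prop:independent} — rather than by an abstract resolvent perturbation carried out space by space, which would let the threshold depend on $s$ and $p$.
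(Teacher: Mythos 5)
Your overall skeleton coincides with the paper's: read the estimate off the structure \eqref{eq:schrohe2} furnished by Theorem \ref{thm:resolvent}, and get the $s,p$-independence of the threshold $c$ from the $s,p$-independence of invertibility (Proposition \ref{prop:independent}); the way you dispose of the auxiliary shift by $c$ is also fine. The gap is at the one step that carries the actual content: you assert that membership of $B(\eta)$ in $C^{-\mu}_O(\Sigma)$ and of $G(\eta)$ in $\calC^{-\mu}_G(\Sigma;\gamma,\gamma)$ ``by the standard mapping properties recorded in the appendix of \cite{SchrSe}'' yields $\|B(\eta)\|_{\scrL(\calH^{s,\gamma}_p(\bz))}+\|G(\eta)\|_{\scrL(\calH^{s,\gamma}_p(\bz))}\le C_{s,p}\spk{\eta}^{-\mu}$ for \emph{all} $s\ge0$ and $1<p<\infty$. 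No such uniform-in-$\eta$ estimate on the higher order Mellin--Sobolev spaces is simply on record there. The uniform estimates that come with the parameter-dependent cone calculus (and with the Green class, cf.\ Section \ref{sec:twisted} and Proposition \ref{prop:green}) are formulated through the twisted symbol condition, i.e.\ after conjugation by $\kappa_{\spk{\eta}}$; this translates into untwisted operator-norm decay essentially only where the group action is uniformly bounded --- which is why the proof of Theorem \ref{thm:resolvent} works in $\calK^{0,0}_2$, where $\kappa_\lambda$ is unitary. On $\calH^{s,\gamma}_p(\bz)$ with $s>0$ (or $p\ne2$) the naive untwisting $\|a(\eta)\|\le\|\kappa_{\spk\eta}\|\,\|\kappa_{\spk\eta}^{-1}a(\eta)\kappa_{\spk\eta}\|\,\|\kappa_{\spk\eta}^{-1}\|$ loses positive powers of $\spk{\eta}$ depending on $s,p,\gamma$, so the desired bound is \emph{not} a formal consequence of the calculus membership; Lemma \ref{lem:green} and Proposition \ref{prop:green} do not repair this, since they again only encode twisted estimates relative to $\calK^{0,\gamma}_2$.

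This is exactly why the paper's own proof treats only $s=0$ as immediate from \eqref{eq:schrohe2} and, for $s>0$, invokes the argument of Step 2 of the proof of Theorem 3.3 in \cite{RoidosSchr}: there the higher-order estimates are obtained by working with the explicit structure of the pieces (interior Agranovich--Vishik part, Mellin part near the tip, Green/smoothing remainders), exploiting that Fuchs-type derivatives $(t\partial_t)^k\partial_x^\alpha$ interact with the $[\eta]$-scaled kernels and Mellin symbols without producing extra powers of $\spk{\eta}$ --- indeed, extending such resolvent/BIP estimates from $s=0$ to $s>0$ is the main point of that reference, not a quotable routine fact. To complete your proof you would have to either carry out this verification for each of the two summands in \eqref{eq:schrohe2} (which is feasible, e.g.\ by a scaling computation for the Green kernels and a uniform Mellin/Fourier symbol estimate for the $C^{-\mu}_O$ part), or cite the Roidos--Schrohe argument as the paper does; as written, the decisive inequality is assumed rather than proved.
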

\begin{proof}
For $s=0$ this is an immediate consequence of the fact that the inverse has the structure \eqref{eq:schrohe2}. 
The general case is obtained by arguing as in Step 2 of the proof of Theorem 3.3 of \cite{RoidosSchr}.  
The fact that the spectrum does not depend on $s$ and $p$ implies that neither does $c$. 
\end{proof}

\begin{theorem}\label{thm:hinfty}
Let $\underline A$ and $c$ be as in Theorem $\ref{thm:normest}$. Then $A+c$ has a bounded $H_\infty$-calculus on 
$\calH^{s,\gamma}_p(\bz)$ for every $s\ge0$ and $1<p<+\infty$. 
\end{theorem}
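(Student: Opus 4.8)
The plan is to deduce the bounded $H_\infty$-calculus from the structural description of the resolvent in Theorem \ref{thm:resolvent} together with the uniform resolvent estimates of Theorem \ref{thm:normest}, following the scheme of \cite{RoidosSchr}. We know that $\Lambda$ is a sector of minimal growth for $\ulA+c$ in every $\calH^{s,\gamma}_p(\bz)$, so the Dunford integral $f(\ulA+c)=\frac{1}{2\pi i}\int_{\partial\Lambda}f(\lambda)(\lambda-(\ulA+c))^{-1}\,d\lambda$ converges for $f$ holomorphic on $\cz\setminus\Lambda'$ (a slightly smaller sector) and decaying with a positive power rate; the general case of bounded holomorphic $f$ is then obtained by the standard approximation argument once the norm bound $\|f(\ulA+c)\|\le C\|f\|_\infty$ is established on this dense subclass. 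So the heart of the matter is the uniform estimate for such decaying $f$.

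First I would split the resolvent according to \eqref{eq:schrohe2}: writing $\eta^\mu=\lambda$ and $\ulA_c=\ulA+c$,
\begin{equation*}
 (\lambda-\ulA_c)^{-1}=B(\eta)+G(\eta),\qquad B(\eta)\in C^{-\mu}_O(\Sigma),\quad G(\eta)\in\calC^{-\mu}_G(\Sigma;\gamma,\gamma),
\end{equation*}
and correspondingly decompose $f(\ulA_c)$ into a ``pseudodifferential part'' coming from $B$ and a ``Green part'' coming from $G$. For the pseudodifferential part one argues as in \cite{RoidosSchr}: the family $B(\eta)$ has a symbolic structure uniform in $\eta\in\Sigma$, and the key point is that the holomorphic functional calculus applied to such a parameter-dependent family again lands in the cone calculus, with operator norm controlled by $\|f\|_\infty$ times a constant depending only on finitely many symbol seminorms of $B$. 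Concretely, I would exploit that $C^{-\mu}_O(\Sigma)$ consists of parameter-dependent operators whose local symbols satisfy estimates $\|D^\alpha_{(\eta,\xi)}b\|\le C_\alpha\spk{(\eta,\xi)}^{-\mu-|\alpha|}$, integrate the Dunford integral under the symbol, and invoke a Mihlin-type (or $\mathcal{R}$-boundedness) argument on the model cone $X^\wedge$ and on the interior of $\bz$ to bound the resulting operator on $\calH^{s,\gamma}_p(\bz)$; here $s\ge 0$ enters through the usual lifting by the order-reducing operators of the cone calculus, exactly as in Theorem \ref{thm:normest}.

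Next I would handle the Green part. For $G(\eta)\in\calC^{-\mu}_G(\Sigma;\gamma,\gamma)$ the integral kernel lies in $S^{-\mu}(\Sigma,\scrS^{\gamma,-\gamma}_\eps(X^\wedge\times X^\wedge))$ up to the smoothing remainder $r\in\scrS(\Sigma,\scrC^{\infty,\gamma,-\gamma}_\eps(\bz\times\bz))$; inserting this into the Dunford integral and differentiating under the integral sign, the decay $|f(\lambda)|\lesssim|\lambda|^{-\delta}$ together with the $\spk{\eta}^{-\mu}$ decay of the kernel makes $\int_{\partial\Lambda}f(\lambda)G(\eta)\,d\lambda$ absolutely convergent in the Fréchet space of Green operators, with all seminorms bounded by $C\|f\|_\infty$. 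Green operators map $\calK^{0,\gamma}_2(X^\wedge)$ continuously into $\scrS^{\gamma}_\eps(X^\wedge)$ and, after the $s$-lifting, into $\calH^{s,\gamma}_p(\bz)$ for all $s,p$; this gives the desired bound on the Green contribution to $f(\ulA_c)$. Summing the two contributions yields $\|f(\ulA_c)\|_{\scrL(\calH^{s,\gamma}_p(\bz))}\le C\|f\|_\infty$ for $f$ with positive decay rate, and the density/approximation step (Lebesgue dominated convergence in the Dunford integral applied to $f_\eps(\lambda)=f(\lambda)(1+\eps\lambda)^{-1}$ or the standard McIntosh approximation) upgrades this to all bounded holomorphic $f$, completing the proof.

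The main obstacle, I expect, is the uniform estimate for the pseudodifferential part $B(\eta)$: one must show that applying the Dunford integral over a family in $C^{-\mu}_O(\Sigma)$ produces an operator bounded on $\calH^{s,\gamma}_p(\bz)$ with norm $\le C\|f\|_\infty$, where $C$ depends only on finitely many symbol seminorms of $B$ (which are finite by Theorem \ref{thm:resolvent}) and not on $f$. This is precisely where the machinery of \cite[Section 3]{RoidosSchr} is needed — there the required structural input was hypothesized, whereas here Theorem \ref{thm:resolvent} supplies it — so the proof reduces to checking that the hypotheses of that machinery are met and then citing it. The $L_p$-boundedness for $p\ne 2$ and the passage from $s=0$ to general $s\ge 0$ both go through the order-reducing isomorphisms of the cone calculus and an $\mathcal{R}$-boundedness (vector-valued Mihlin) argument, exactly as in the proof of Theorem \ref{thm:normest}.
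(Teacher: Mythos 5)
Your overall plan -- split the resolvent according to \eqref{eq:schrohe2} into a part in $C^{-\mu}_O(\Sigma)$ and a Green part, estimate the Dunford integral for each, and then lift from $s=0$ to $s>0$ -- is in spirit the same route the paper takes; the paper simply carries it out by quoting the earlier results (Theorems 1 and 2 of \cite{CSS3} and Theorem 4.1 of \cite{CSS1}, after observing that the non-classicality of the Green family is harmless, plus Step 3 of the proof of \cite[Theorem 3.3]{RoidosSchr} for the passage to $s>0$). The genuine gap is in your treatment of the Green term. You claim that, since $|f(\lambda)|\le C|\lambda|^{-\delta}$ and the Green family has order $-\mu$ in $\eta$, the integral $\int_{\partial\Lambda}f(\lambda)G(\eta)\,d\lambda$ converges absolutely ``with all seminorms bounded by $C\|f\|_\infty$.'' This does not follow: the order $-\mu$ in $\eta$ only gives decay $\spk{\eta}^{-\mu}\sim\spk{\lambda}^{-1}$ of the relevant (twisted) seminorms, so the absolute-value estimate is of the form $\int_{\partial\Lambda}|f(\lambda)|\,\spk{\lambda}^{-1}\,|d\lambda|$, which diverges for a general bounded $f$ and, for decaying $f$, is controlled only by the decay constant of $f$, not by $\|f\|_\infty$. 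Hence the uniform bound $C\|f\|_\infty$ on the dense subclass of decaying functions -- exactly what the McIntosh-type approximation step requires -- is not established, and the approximation argument cannot repair this afterwards. The correct estimate must exploit the finer structure of Green symbols: the rapid decay of the rescaled kernels in $t[\eta]$, $s[\eta]$ together with the $\eps$-improvement of the weights yields, for fixed $(t,s)$, additional decay in $\lambda$ independent of $f$, so that a Schur-type kernel estimate produces a bound proportional to $\|f\|_\infty$. This is precisely the content of \cite[(3.11), Theorem 4.1]{CSS1} and of \cite{CSS3}, which the paper invokes instead of re-deriving.

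A similar caveat applies to the part in $C^{-\mu}_O(\Sigma)$: the symbol estimates of order $-\mu$ alone lead to the same logarithmically divergent bound, and a ``Mihlin plus integrate under the symbol'' argument only closes once one uses the resolvent-like structure of the family and the holomorphy of $f$ (Cauchy estimates for $f$ composed with the symbol, with genuinely better $\lambda$-decay only for the lower-order remainders). Since for this part you explicitly defer to the machinery of \cite{RoidosSchr} (note, though, that the $H_\infty$ statement at $s=0$ is in \cite{CSS1}, while \cite{RoidosSchr} supplies the lifting to $s>0$), the decisive defect of the proposal as written is the Green-term estimate; with it the claimed bound $\|f(\ulA_c)\|\le C\|f\|_\infty$ is not proved.
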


\begin{proof}
According to \eqref{eq:schrohe2}, we have 
$(\eta^\mu-(\underline A+c))^{-1} \in C_O^{-\mu}(\Sigma) + \calC^{-\mu}_G(\Sigma, \gamma, \gamma).$
This form of the resolvent differs from the structure used in Theorem 1 of \cite{CSS3} only by the fact that, writing  $\lambda = \eta^\mu$, 
the operator family $G(\lambda)$ used there is no longer required to be classical in $\lambda$. 
 It was already pointed out in the proof of \cite[Proposition 2]{CSS3} that this property is not necessary for the subsequent argument. 
So it follows from Theorem 2 of \cite{CSS3} that $\underline A$ has bounded imaginary powers on $\calH^{0,\gamma}_p(\bz)$.  
As shown in Step 3 of the proof the Theorem 3.3 in \cite{RoidosSchr}, 
the proof can be modified to give also the boundedness of the imaginary powers on $\calH^{s,\gamma}_p(\bz)$, $s>0$. 
Moreover, it was shown in \cite{CSS1} that a structure of the resolvent as in 
\cite[Theorem 1]{CSS3} implies the existence of a bounded 
$H_\infty$-calculus on $\calH^{0,\gamma}_p(\bz)$, see \cite[Theorem 4.1]{CSS1} based on the representation \cite[(3.11)]{CSS1}. 
Arguments analogous to those used in Step 3 of the proof of \cite[Theorem 3.3]{RoidosSchr} then imply the existence of a bounded 
$H_\infty$-calculus on $\calH^{s,\gamma}_p(\bz)$ for $s>0$.   
\end{proof}

\section{Laplacian and porous medium equation}\label{sec:applications}

\subsection{Abstract quasilinear parabolic equations}
Consider an abstract quasilinear parabolic problem of the form
\begin{gather}\label{QL}
u'(t)+A(u(t))u(t)=f(t,u(t))+g(t), \quad t\in(0,T_{0});\quad
u(0)=u_{0}	
\end{gather}
in $L^{q}(0,T_{0};X_{0})$, $1<q<\infty$, where $A(u(t)$ is, for each $t$, a closed, densely defined operator in the Banach space $X_0$ with domain $\scrD(A(u(t)))=X_{1}$, independent of $t$.

The following theorem by Cl\'{e}ment and Li, \cite[Theorem 2.1]{CL} provides a simple criterion for the existence of short time solutions: 
\begin{theorem}\label{CL} 
Assume that  there exists an open neighborhood $U$ of $u_0$ in the real interpolation space  $X_{1-1/q,q}= (X_0,X_1)_{1-1/q,q}$ such that $A(u_0): X_{1}\rightarrow X_{0}$ has maximal $L^{q}$-regularity and that  
\begin{itemize}
\item[(H1)] $A\in C^{1-}(U, \scrL(X_1,X_0))$,    
\item[(H2)] $f\in C^{1-,1-}([0,T_0]\times U, X_0)$,
\item[(H3)] $g\in L^q(0,T_0; X_0)$.
\end{itemize}
Then there exists a $T>0$ and a unique $u\in L^q(0,T;X_1)\cap W^1_q(0,T;X_0)$ solving the equation \eqref{QL} on $(0,T)$.
In particular, $u\in  C([0,T];X_{1-1/q,q})$ by interpolation.
\end{theorem}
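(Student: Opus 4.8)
The plan is to reduce the quasilinear equation to a fixed-point problem for the linearization at $u_0$ and to invoke the contraction mapping principle in the maximal regularity space. Write $A_0:=A(u_0)\in\scrL(X_1,X_0)$, which by assumption has maximal $L^q$-regularity, put $E_T:=L^q(0,T;X_1)\cap W^1_q(0,T;X_0)$ with its natural norm, and recall the continuous trace embedding $E_T\hookrightarrow C([0,T];X_{1-1/q,q})$. First I would rewrite \eqref{QL} as
\[ u'(t)+A_0u(t)=\big(A_0-A(u(t))\big)u(t)+f(t,u(t))+g(t),\qquad u(0)=u_0, \]
and use maximal regularity of $A_0$ to introduce the bounded linear solution operator $\mathcal{S}$ that sends a right-hand side in $L^q(0,T;X_0)$ together with an initial value in $X_{1-1/q,q}$ to the unique solution in $E_T$ of the corresponding inhomogeneous linear problem.

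Next I would split off the reference solution $u_\ast\in E_T$ of $u_\ast'+A_0u_\ast=g$, $u_\ast(0)=u_0$, and look for $u=u_\ast+w$ with $w$ in a small closed ball $B_\varrho$ inside the zero-trace subspace ${}_0E_T:=\{w\in E_T:\ w(0)=0\}$. The decisive structural facts are that the embedding constant of ${}_0E_T\hookrightarrow C([0,T];X_{1-1/q,q})$ may be chosen independent of $T\in(0,T_0]$, so $\|w\|_{C([0,T];X_{1-1/q,q})}\le c\,\varrho$ uniformly in $T$, and that $\|u_\ast(t)-u_0\|_{X_{1-1/q,q}}\to 0$ as $t\to 0$; hence, for $\varrho$ and $T$ small, $u_\ast(t)+w(t)$ stays inside the neighborhood $U$ and close to $u_0$ in $X_{1-1/q,q}$. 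I then define $\Phi(w)$ to be the image under $\mathcal{S}$ of the data $\big(A_0-A(u_\ast+w)\big)(u_\ast+w)+f(\cdot,u_\ast+w)+g$ with initial value $u_0$, minus $u_\ast$; by construction a fixed point of $\Phi$ in $B_\varrho$ is precisely a solution of \eqref{QL} on $(0,T)$.

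The heart of the argument is to show that $\Phi$ is a self-map of $B_\varrho$ and a contraction for a suitable choice of $\varrho$ and then $T$. This rests on three estimates, each combining (H1)--(H3) with H\"older's inequality in $t$ to harvest a positive power of $T$: (i) $\|(A_0-A(u_\ast+w))(u_\ast+w)\|_{L^q(0,T;X_0)}\le \big(\sup_{[0,T]}\|A_0-A(u_\ast+w)\|_{\scrL(X_1,X_0)}\big)\|u_\ast+w\|_{L^q(0,T;X_1)}$, where the supremum is made small by the continuity hypothesis (H1) together with the smallness of $\|u_\ast+w-u_0\|_{C([0,T];X_{1-1/q,q})}$; (ii) the Lipschitz bounds from (H1) and (H2) control $(A_0-A(u_\ast+w_1))(u_\ast+w_1)-(A_0-A(u_\ast+w_2))(u_\ast+w_2)$ and $f(\cdot,u_\ast+w_1)-f(\cdot,u_\ast+w_2)$ by a constant times $\|w_1-w_2\|_{C([0,T];X_{1-1/q,q})}\le c\,\|w_1-w_2\|_{{}_0E_T}$, again up to a favorable $T$-power; (iii) the term coming from replacing $u$ by $u_\ast$ in the data tends to $0$ as $T\to 0$ because $u_\ast(0)=u_0$. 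Multiplying through by the norm of $\mathcal{S}$ and choosing $\varrho$, then $T$, sufficiently small yields $\Phi(B_\varrho)\subset B_\varrho$ and $\|\Phi(w_1)-\Phi(w_2)\|_{{}_0E_T}\le\tfrac12\|w_1-w_2\|_{{}_0E_T}$.

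Banach's fixed-point theorem then produces a unique $w\in B_\varrho$, hence a solution $u=u_\ast+w\in L^q(0,T;X_1)\cap W^1_q(0,T;X_0)$; uniqueness within this class follows from a Gronwall estimate on the difference of two solutions on a possibly smaller interval, and the stated continuity $u\in C([0,T];X_{1-1/q,q})$ is just the trace embedding of $E_T$. The step I expect to be the main obstacle is the simultaneous bookkeeping in the third paragraph: one must make the operator-norm defect $A_0-A(u)$ small (which requires closeness only in the interpolation norm $X_{1-1/q,q}$, not in $X_1$), extract the $T$-powers from H\"older, and exploit the $T$-independence of the embedding constant on ${}_0E_T$, all so that one single pair $(\varrho,T)$ turns $\Phi$ into a contractive self-map.
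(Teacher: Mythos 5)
Your proposal is essentially correct, but note that the paper itself contains no proof of this statement: it is quoted verbatim as Theorem 2.1 of Cl\'ement and Li \cite{CL}, so the comparison can only be with that reference. Your argument -- rewriting \eqref{QL} with the linearization $A(u_0)$, invoking its maximal $L^q$-regularity to get the solution operator, splitting off the reference solution with data $(g,u_0)$, and running a Banach fixed-point argument in a small ball of the zero-trace maximal regularity space, using the $T$-uniform trace embedding into $C([0,T];X_{1-1/q,q})$ and H\"older in $t$ to make the self-map and contraction constants small -- is precisely the standard Cl\'ement--Li scheme, and the only point deserving slightly more care than you give it is uniqueness in the full class $L^q(0,T;X_1)\cap W^1_q(0,T;X_0)$ rather than just in the ball, which follows by restricting an arbitrary solution to a short interval where its trace stays in $U$ and reapplying the local contraction.
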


A central property is the maximal regularity of the operator $A(u_0)$. Without going into details,  we recall the following facts, which hold in UMD Banach spaces: 
\begin{proposition}\label{prop:MR} 
{\rm (a)} The existence of a bounded $H_\infty$-calculus implies the $R$-sectoriality for the same sector according to Cl\'ement and Pr\"uss,  \cite[Theorem 4]{CP}.

{\rm (b)} Every operator, which is $R$-sectorial on $\Lambda(\theta)$ for $\theta<\pi/2$, has maximal $L^q$-regularity, $1<q<\infty$, 
see Weis \cite[Theorem 4.2]{W}.
\end{proposition}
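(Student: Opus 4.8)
Both assertions are classical; I would treat them separately, the task being to pin down the precise results they rest upon.

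For (a), I would argue as in the proof of \cite[Theorem 4]{CP}. Fix a sector $\cz\setminus\Lambda(\theta)$ on which the operator $A$ (here $A=\ulA+c$) has a bounded $H_\infty$-calculus on $X=\calH^{s,\gamma}_p(\bz)$, and note that for any $\theta'<\theta$ the resolvent functions $r_\lambda(z)=\lambda/(\lambda-z)$, $\lambda\in\Lambda(\theta')$, form a \emph{bounded} subset of $H^\infty(\cz\setminus\Lambda(\theta))$. The first step is the Kalton--Weis theorem: boundedness of the calculus, together with Pisier's property $(\alpha)$ of $X$, implies that $\{f(A):\|f\|_\infty\le 1\}$ is $R$-bounded. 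The second step is to feed the uniformly bounded family $(r_\lambda)$ into this $R$-bounded set, which gives $R$-boundedness of $\{\lambda(\lambda-A)^{-1}:\lambda\in\Lambda(\theta')\}$, i.e.\ $R$-sectoriality of the same angle. For the property $(\alpha)$ of $X$ I would note that, through the local trivializations and the charts $\wh\kappa_j$, $X$ is a retract of a finite direct sum of spaces of the form $L^p(\rz_+,t^N\,\tfrac{dt}{t})\,\wh{\otimes}_\pi\,L^p(\rz^n)$, and therefore inherits $(\alpha)$ from $L^p$. (If one prefers to stay strictly within the bare UMD hypothesis, there is the alternative route, covering (a) and (b) together when $\theta<\pi/2$, of using that a bounded $H_\infty$-calculus of angle $<\pi/2$ yields bounded imaginary powers of power angle $<\pi/2$ and then invoking the Dore--Venni theorem.)

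For (b), I would reduce maximal $L^q$-regularity of $A$ on $(0,T_0)$, by the standard extension to the half-line and Fourier transform in $\rz$, to the boundedness on $L^q(\rz;X)$ of the operator-valued Fourier multiplier with symbol $M(\xi)=i\xi\,(i\xi+A)^{-1}$. Since $\theta<\pi/2$, the punctured imaginary axis is contained in $\Lambda(\theta)$, so that $\{M(\xi):\xi\ne 0\}$ is a subfamily of the $R$-sectoriality family and hence $R$-bounded; moreover a one-line computation gives $\xi M'(\xi)=M(\xi)-M(\xi)^2$, which is again $R$-bounded because products of $R$-bounded families are $R$-bounded. Weis's operator-valued Mikhlin multiplier theorem \cite[Theorem 4.2]{W}, applicable because $X$ is a UMD space, then yields boundedness of the multiplier and hence maximal $L^q$-regularity.

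The only real content, I expect, is in invoking the right black boxes correctly: in (a) the passage from a bounded $H_\infty$-calculus to $R$-sectoriality (Cl\'ement--Pr\"uss / Kalton--Weis, together with property $(\alpha)$ of the weighted Sobolev scale, or else the Dore--Venni detour), and in (b) the operator-valued Mikhlin theorem. The symbol estimates, the stability of $R$-boundedness under products, and the half-line reduction are all routine.
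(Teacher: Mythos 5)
The paper offers no proof here: Proposition \ref{prop:MR} is a recollection of two classical theorems, quoted from Cl\'ement--Pr\"uss \cite[Theorem 4]{CP} and Weis \cite[Theorem 4.2]{W}, and the only point the text actually makes is that the Mellin--Sobolev spaces are UMD, so these theorems apply. Your sketch of (b) is correct, but it simply reproves Weis's theorem (reduction to the symbol $M(\xi)=i\xi(i\xi+A)^{-1}$, $R$-boundedness of $M$ and $\xi M'=M-M^2$, operator-valued Mikhlin in UMD spaces); that is precisely the content of the cited result, so nothing beyond the UMD property of $\calH^{s,\gamma}_p(\bz)$ has to be checked.

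For (a) you take a genuinely different route from the one the paper cites, and it is here that your argument has a weak point. The cited Cl\'ement--Pr\"uss theorem needs only UMD: a bounded $H_\infty$-calculus gives bounded imaginary powers with power angle controlled by the calculus angle, and in a UMD space BIP implies $R$-sectoriality with $R$-angle at most the power angle (a transference argument). Your route via Kalton--Weis instead makes Pisier's property $(\alpha)$ an essential hypothesis, and your justification of $(\alpha)$ is not right as stated: the projective tensor product $L^p(\rz_+,t^N\tfrac{dt}{t})\,\wh{\otimes}_\pi\,L^p(\rz^n)$ is not $L^p$ of the product space, there is no reason for it to have property $(\alpha)$, and $\calH^{s,\gamma}_p(\bz)$ is not naturally a retract of such tensor products. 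The correct repair is to use order and weight reductions in the cone calculus, which give isomorphisms $\calH^{s,\gamma}_p(\bz)\cong\calH^{0,0}_p(\bz)$, the latter being a weighted $L^p$-space; weighted $L^p$-spaces have property $(\alpha)$ and it passes to isomorphic spaces. With that repair your argument works and even yields the stronger $R$-bounded $H_\infty$-calculus; alternatively one simply follows the BIP/transference route of \cite{CP}, as the paper does, and avoids $(\alpha)$ altogether. Note finally that your parenthetical Dore--Venni alternative delivers maximal $L^q$-regularity directly but not the $R$-sectoriality asserted in (a), so it replaces the proposition rather than proving it.
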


All the Mellin-Sobolev spaces used here are UMD Banach spaces, hence the existence of a bounded $H_\infty$-calculus on $\Lambda(\theta)$ for $\theta<\pi/2$ implies maximal $L^q$-regularity.  

\subsection{The Laplacian on warped cones}\label{sec:warped}
Let $g$ be a Riemannian metric on the interior of $\bz$ that is degenerate of the form 
$$g = dt^2 + t^2 h(t)$$
on the collar neighborhood $[0,1)\times X$. Here $[0,1)\ni t\mapsto h(t)$ is a smooth family of (non-degenerate) Riemannian metrics on $X$. 
We denote by $\Delta$ the Laplace-Beltrami operator associated with $g$. 
This operator has been analyzed in detail in \cite{SchrSe} for the case of a straight conical singularity, i.e., when $h$ is constant in $t$. 
We will now extend the analysis to the more general situation.  

Let us recall a few basic facts, referring to \cite[Section 5]{SchrSe} for  more details: 
The Laplacian is a second order conically degenerate differential operator on $\bz$. 
In the  collar neighborhood it is of the form   
\begin{eqnarray}\label{eq:Laplace}
\Delta = t^{-2}\left((t\partial_t)^2 - (n-1+H(t))(-t\partial_t)+ \Delta_t  \right),
\end{eqnarray}
where $\Delta_t$ is the Laplace-Beltrami operator on the cross-section $X$ 
associated with the Riemannian metric $h(t)$, $n=\dim X$, and 
$2H(t) = t\partial_t (\log \det h(t))$. 
The conormal symbol of $\Delta$ is  
\begin{equation}\label{eq:f0}
 f_0(z)=\sigma_M(\Delta)(z) = z^2-(n-1)z + \Delta_{0}
\end{equation}
and, in the notation of \eqref{eq:conormal}, 
\begin{equation}\label{eq:f1}
 f_1(z) = \dot{\Delta}_{0} - \dot H(0)z.
\end{equation}
The model cone operator $\widehat \Delta$ is 
\begin{eqnarray}\label{eq:Laplace_model}
\widehat\Delta = t^{-2}\left((t\partial_t)^2 - (n-1+H(0))(-t\partial_t)+ \Delta_0  \right).
\end{eqnarray}

Denote by $0=\lambda_0 >\lambda_1>\ldots $ the different eigenvalues of $\Delta_{0}$ 
and by $E_0, E_1,\ldots$ the associated eigenspaces.
The non-bijectivity points  of $\sigma_M(\Delta)(z)$ are the points $z=q_j^+$ and $q_j^-$, where 
$$q_j^\pm = \frac{n-1}2\pm \sqrt{\Big(\frac{n-1)}2\Big)^2-\lambda_j},\qquad j=0,1,2,\ldots.$$
In fact, we have 
\begin{equation}\label{eq:f0inv}
 f_0(z)^{-1}=(z^2-(n-1)z+\Delta_{0})^{-1}  = \sum_{j=0}^\infty \frac{\pi_j}{(z-q^+_j)(z-q^-_j)}
\end{equation}
where $\pi_j$ is the orthogonal projection, in $L^2(X)$, onto the eigenspace $E_j$.  
The poles are always simple, except for the double pole in $z=q_0^\pm=0$ in case $n=1$. 

\subsection{Extensions with (E1), (E2), and (E3)}
Considering $\Delta$ as an unbounded operator in $\calH^{s,\gamma}_p(\bz)$ for 
$s,\gamma\in \rz$, $1<p<\infty$, we are interested in the question 
which of its closed extensions satisfy the 
assumptions (E1), (E2) and (E3) and therefore have a bounded $H_\infty$-calculus. 
As the sector $\Lambda$ we choose  $\Lambda(\theta)$ for arbitrary $\theta>0$. 
Clearly, (E1) is always fulfilled. (E2) will hold for every $\gamma$ satisfying 
\begin{eqnarray}\label{eq:gamma}
\frac{n+1}2-\gamma,\,\frac{n+1}2-\gamma-2\notin \{q_j^\pm \mid j=0,1,\ldots\}.
\end{eqnarray}
We will assume this in the sequel. It remains to check (E3). 

With $q_j^\pm$, $j=1,2,\ldots$,  we associate the function space 
$$\wh{\scrE}_{q_j^\pm} =t^{-q_j^\pm}\otimes E_j :=
 \{(t,x) \mapsto t^{-q_j^\pm}e(x) \mid e\in E_j\}.$$ 
Moreover we set 
$$ \wh{\scrE}_{q_0^\pm} =\begin{cases}t^{q_0^\pm}\otimes E_0&\quad: n>1\\
     1\otimes E_0 + \log t\otimes  E_0&\quad: n=1\end{cases}.
$$
For $q$ different from every $q_j^\pm$ we set $\wh{\scrE}_q=\{0\}$. 
Let us also introduce the interval  
$$I_0 = \Big(\frac{n+1}{2}-2, \frac{n+1}{2}\Big)=
    \Big(\frac{n-1}{2}-1, \frac{n-1}{2}+1\Big)$$
and  the translated intervals 
\begin{eqnarray}\label{eq:intervall}
 I_\gamma = I_0-\gamma,\qquad\gamma\in\rz. 
\end{eqnarray}

By Theorem \ref{thm:max_domain1}, the maximal domain of $\widehat \Delta$ is as follows. 

\begin{proposition}\label{prop:5.1}
The maximal extension of $\widehat\Delta$ as an unbounded operator on $\calK^{s,\gamma}_p(X^\wedge)$ has the domain
$$\scrD_{\max}(\widehat\Delta) = \calK^{s+2, \gamma+2}_p (X^\wedge) 
\oplus \mathop{\mbox{\LARGE$\oplus$}}_{q\in I_\gamma} \omega\wh{\scrE}_q.$$
In view of \eqref{eq:gamma}, the minimal domain is 
$\calK^{s+2, \gamma+2}_p (X^\wedge)$.  
\end{proposition}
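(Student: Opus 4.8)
The plan is to apply Theorem \ref{thm:max_domain1} directly, so the main task is to translate the abstract description $\wh\scrE = \bigoplus_{\sigma\in S_\gamma}\wh\scrE_\sigma$ with $\wh\scrE_\sigma = \mathrm{range}\,G_\sigma^{(0)}$ into the concrete formula involving the $\wh\scrE_q$. First I would identify the set $S_\gamma$ from \eqref{eq:sigma}: it consists of the poles $\sigma$ of $f_0^{-1}$ with $\frac{n+1}{2}-\gamma-2<\re\sigma<\frac{n+1}{2}-\gamma$, i.e. exactly those $q_j^\pm$ lying in the open interval $I_\gamma$. Here the genericity assumption \eqref{eq:gamma} is what guarantees no $q_j^\pm$ sits on the boundary of $I_\gamma$, so the decomposition is over $\{q\in I_\gamma\}$ with $\wh\scrE_q = \{0\}$ unless $q$ is one of the $q_j^\pm$. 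Note also that whenever some $q_j^+$ lies in $I_\gamma$ then automatically $q_j^- = (n-1)-q_j^+$ may or may not; one simply takes the union over all $j$ and both signs, which is precisely the indexing in the statement.

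Next I would compute $G_\sigma^{(0)}$ explicitly using the residue theorem, exactly as in the worked example of Section \ref{subsec:example}. For a simple pole $\sigma = q_j^\pm \neq 0$, the partial-fraction expansion \eqref{eq:f0inv} shows that the residue of $f_0^{-1}$ at $q_j^\pm$ is $\pm\pi_j/\sqrt{\big(\tfrac{n-1}{2}\big)^2-\lambda_j}$ (a nonzero multiple of the spectral projection $\pi_j$), whose range is $E_j$; hence $(G_\sigma^{(0)}u)(t) = t^{-\sigma}\alpha_\sigma(\wh u(\sigma))$ and, since $u\mapsto \wh u(\sigma)$ is onto $\scrC^\infty(X)$, we get $\wh\scrE_{q_j^\pm} = t^{-q_j^\pm}\otimes E_j$. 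For the exceptional double pole at $z = q_0^\pm = 0$ in the case $n=1$, I would carry out the order-two residue: writing $f_0(z)^{-1} = \pi_0/z^2 + (\text{higher order})$ near $z=0$ and expanding $t^{-z} = 1 - z\log t + \cdots$, the residue of $t^{-z}f_0^{-1}(z)\wh u(z)$ picks up both a constant term and a $\log t$ term with coefficients in $E_0$, yielding $\wh\scrE_{q_0^\pm} = 1\otimes E_0 + \log t\otimes E_0$, which matches the stated $n=1$ case. This reproduces the definitions of $\wh\scrE_q$ given just before the proposition.

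Finally, for the minimal domain, I would invoke the remark in part i$)$ of Section \ref{sec:02.1} (and the statement after \eqref{eq:ext01.5}): the minimal domain of $\wh\Delta$ equals $\calK^{s+2,\gamma+2}_p(X^\wedge)$ precisely when the conormal symbol $\sigma_M(\Delta)(z)$ is invertible on the line $\re z = \frac{n+1}{2}-\gamma-2$, which is guaranteed by \eqref{eq:gamma}. Combining with $\scrD_{\max}(\wh\Delta) = \scrD_{\min}(\wh\Delta)\oplus\omega\wh\scrE$ from Proposition \ref{lem:ext01} gives the claimed formula. I do not expect a genuine obstacle here; the only point requiring a little care is the bookkeeping of the double pole in dimension one, and checking that the projection $\pi_j$ appearing as (a multiple of) the residue indeed has range exactly $E_j$ — but this is immediate from \eqref{eq:f0inv}. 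The bulk of the proof is a direct residue computation feeding into Theorem \ref{thm:max_domain1}.
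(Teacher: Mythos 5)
Your proposal is correct and follows essentially the same route as the paper, which simply invokes Theorem \ref{thm:max_domain1} together with the explicit residue computation of $G_\sigma^{(0)}$ (as in Section \ref{subsec:example}) and the partial-fraction formula \eqref{eq:f0inv}, plus the remark in i$)$ of Section \ref{sec:02.1} for the minimal domain. The only slip is the value of the residue at a simple pole, which is $\pm\pi_j/(q_j^+-q_j^-)$, i.e.\ off by a factor $2$ from what you wrote, but this is harmless since only its range $E_j$ enters the argument.
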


We next consider an extension $\widehat {\underline \Delta}$ of $\widehat \Delta$ 
with domain
\begin{eqnarray}\label{eq:domain}
\scrD(\widehat {\underline \Delta}) = \calK^{2, \gamma+2}_p (X^\wedge) 
\oplus \mathop{\mbox{\LARGE$\oplus$}}_{q\in I_\gamma} \omega\ulwhE_q,
\end{eqnarray}
where $\ulwhE_q$ is a subspace of $\wh{\scrE}_q$. 
For $n=1$ and $q=0$ we confine ourselves to the choices $\ulwhE_0=\{0\}$,
$\ulwhE_0=1\otimes E_0$ or $\ulwhE_0=\wh\scrE_0$. 
We define the spaces $\ulwhE_q^\perp$ as follows:
For $j\not=0$ or $n>1$ we write  
$\ulwhE_{q^\pm_j}=t^{-q_j^\pm}\otimes \underline E_j$ 
with a subspace $\underline E_j$ of $E_j$ and let 
\begin{eqnarray*}
\ulwhE_{q_j^\pm}^\perp= t^{-q_j^\mp}\otimes \underline E_j^\perp
\end{eqnarray*} 
with the orthogonal complement $\underline E_j^\perp$ of $\underline E_j$ in $E_j$ 
with respect to the scalar product on $L^2(X)$. 
For $j=0$ and $n=1$ (i.e. $q_0^\pm=0$), we let 
$\ulwhE_0^\perp = \wh{\scrE}_0$, if $\ulwhE_0=\{0\}$,   
$\ulwhE_0^\perp = \{0\}$, if $\ulwhE_0=\wh{\scrE}_0$ and 
$\ulwhE_0^\perp = \ulwhE_0$, if $\ulwhE_0= 1\otimes E_0$.
For every other $q$ set $\ulwhE_q^\perp=\{0\}$. 

Just as in  \cite[Theorem 5.3]{SchrSe} we find that

\begin{proposition}\label{prop:5.2}
In case $p=2$, the domain of the adjoint $\underline{\widehat\Delta}^* $ of the extension 
$\underline{\widehat\Delta} $ with domain \eqref{eq:domain} is
$$\scrD( \underline{\widehat\Delta}^*) = \calK^{-s+2, -\gamma+2}_2 (X^\wedge) 
\oplus \mathop{\mbox{\LARGE$\oplus$}}_{q\in I_\gamma} \omega\underline{\widehat\scrE}_q^\perp.
$$
\end{proposition}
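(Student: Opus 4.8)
The plan is to follow the straight‑cone argument of \cite[Theorem 5.3]{SchrSe}, which applies here because the model cone operator does not feel the warping: since $h$ is smooth up to $t=0$, the function $2H(t)=t\partial_t\log\det h(t)$ vanishes at $t=0$, so by \eqref{eq:Laplace_model} the operator $\widehat\Delta=t^{-2}\big((t\partial_t)^2-(n-1)(-t\partial_t)+\Delta_0\big)$ is precisely the model cone Laplacian of the straight cone with cross section $(X,h(0))$. Since the $\calK^{0,0}_2(X^\wedge)$‑pairing identifies the dual of $\calK^{s,\gamma}_2(X^\wedge)$ with $\calK^{-s,-\gamma}_2(X^\wedge)$, the adjoint $\underline{\widehat\Delta}^{*}$ is a closed extension of the formal transpose $\widehat\Delta^{t}$ acting in $\calK^{-s,-\gamma}_2(X^\wedge)$, hence is governed by the weight $-\gamma$. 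As recalled in the proof of Theorem \ref{thm:resolvent}, the conormal symbol of $\widehat\Delta^{t}$ is $f_0(n-1-\bar z)^{t}$; the poles of its inverse are again the $q_j^\pm$ (the set $\{q_j^\pm\}$ being invariant under $z\mapsto n-1-\bar z$, since $q_j^++q_j^-=n-1$), and, because $f_0$ in \eqref{eq:f0} has real scalar coefficients and $\Delta_0$ is symmetric, the corresponding Laurent data is again governed by the eigenspaces $E_j$.

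First I would pin down the domains of $\widehat\Delta^{t}$ at weight $-\gamma$. The symmetry of $\{q_j^\pm\}$ about $\frac{n-1}{2}$ shows that condition \eqref{eq:gamma} for $\gamma$ is equivalent to the same condition for $-\gamma$; hence $\scrD_{\min}(\widehat\Delta^{t})=\calK^{-s+2,-\gamma+2}_2(X^\wedge)$, while Proposition \ref{prop:5.1}, applied at weight $-\gamma$, gives $\scrD_{\max}(\widehat\Delta^{t})=\calK^{-s+2,-\gamma+2}_2(X^\wedge)\oplus\bigoplus_{q\in I_{-\gamma}}\omega\wh{\scrE}_q$. The reflection $q\mapsto n-1-q$ is an order‑reversing bijection of $I_\gamma$ onto $I_{-\gamma}$ exchanging $q_j^+$ and $q_j^-$, so the summands appearing here are exactly the spaces $\wh{\scrE}_{q_j^\mp}=t^{-q_j^\mp}\otimes E_j$ attached to those $q_j^\pm\in I_\gamma$ occurring in \eqref{eq:domain} --- which is precisely where the spaces $\omega\,\ulwhE_q^\perp$ live.

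The core of the proof is the Green pairing. For $u\in\scrD_{\max}(\widehat\Delta)$ and $v\in\scrD_{\max}(\widehat\Delta^{t})$ the form $[u,v]:=\skp{\widehat\Delta u}{v}_{\calK^{0,0}_2}-\skp{u}{\widehat\Delta^{t}v}_{\calK^{0,0}_2}$ is finite and vanishes whenever $u\in\scrD_{\min}(\widehat\Delta)$ or $v\in\scrD_{\min}(\widehat\Delta^{t})$; hence it descends to a pairing between $\bigoplus_{q\in I_\gamma}\wh{\scrE}_q$ and $\bigoplus_{q\in I_{-\gamma}}\wh{\scrE}_q$ which is non‑degenerate by the general theory of closed extensions of cone operators (cf.\ \cite{Lesc}, \cite{GiMe}, \cite{GKM2}). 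Computing $[\cdot,\cdot]$ for the model cone Laplacian --- a Mellin/contour computation based on the partial fraction decomposition \eqref{eq:f0inv} together with integration by parts in $t$ against $t^{n}\,dt$ --- one finds that it is block diagonal with respect to the eigenspaces $E_j$ (by orthogonality of distinct eigenspaces of $\Delta_0$), and that on the $j$‑th block it is a nonzero multiple of the $L^2(X)$‑inner product, pairing $t^{-q_j^+}\otimes E_j$ with $t^{-q_j^-}\otimes E_j$; no logarithmic terms occur since the poles are simple. Therefore a $v=v_{\min}+v_{\mathrm{as}}$ with $v_{\mathrm{as}}\in\bigoplus_{q\in I_{-\gamma}}\omega\wh{\scrE}_q$ belongs to $\scrD(\underline{\widehat\Delta}^{*})$ if and only if $[u,v]=0$ for all $u\in\bigoplus_{q\in I_\gamma}\omega\,\ulwhE_q$, and inspecting the block structure shows that this annihilator is exactly $\bigoplus_{q\in I_\gamma}\omega\,\ulwhE_q^\perp$, by the very definition of the $\ulwhE_q^\perp$ as the orthogonal complements $\underline E_j^\perp$ of $\underline E_j$ placed at the reflected exponent. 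The exceptional case $n=1$, $q=0$ (the double pole, with logarithmic asymptotics $1\otimes E_0+\log t\otimes E_0$) is handled separately: there $[\cdot,\cdot]$ pairs the constant term against the $\log t$ term via the $L^2(X)$‑inner product, and running through the three admissible choices $\ulwhE_0\in\{\{0\},\,1\otimes E_0,\,\wh{\scrE}_0\}$ reproduces $\ulwhE_0^\perp\in\{\wh{\scrE}_0,\,1\otimes E_0,\,\{0\}\}$, respectively.

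The main obstacle is the explicit evaluation of the Green form on $\widehat\Delta$: shifting the Mellin‑transformed integrands across the vertical lines $\re z=\frac{n+1}{2}-\gamma$ and $\re z=\frac{n+1}{2}-\gamma-2$ and reading off which asymptotic exponents contribute a nonzero boundary term at $t=0$, and with which constants, together with the bookkeeping for the reflection $q\mapsto n-1-q$ and the separate treatment of the logarithmic double pole when $n=1$. This computation is, however, identical to the one carried out for the straight cone in \cite[Section 5]{SchrSe}, since $\widehat\Delta$ coincides with the corresponding straight‑cone model operator; the remaining ingredients --- the symmetry of \eqref{eq:gamma} under $\gamma\mapsto-\gamma$ and the use of Proposition \ref{prop:5.1} at weight $-\gamma$ --- are routine.
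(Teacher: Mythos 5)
Your proposal is correct and follows essentially the same route as the paper, which disposes of Proposition \ref{prop:5.2} by observing that $\widehat\Delta$ only involves $h(0)$, $H(0)=0$ and $\Delta_0$ and then invoking the straight-cone result \cite[Theorem 5.3]{SchrSe}. Your write-up additionally sketches the Green-pairing computation (reflection $q\mapsto n-1-q$, block structure over the eigenspaces $E_j$, separate treatment of the double pole for $n=1$) that underlies that citation, and the details you give are consistent with it.
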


Theorems 5.6 and 5.7 in \cite{SchrSe} then imply the following result: 

\begin{theorem}\label{thm:5.3}
Let $|\gamma| <(n+1)/2$ satisfy \eqref{eq:gamma} and let 
$\underline{\widehat\Delta}$ be an extension with domain as in \eqref{eq:domain},
where the spaces $\underline{\widehat\scrE}_{q}$ are chosen such that: 
\begin{enumerate}
\item $\underline {\widehat{\scrE}}_q^\perp =\underline{\widehat  \scrE}_{n-1-q}$ for 
  $\displaystyle q\in I_\gamma\cap I_{-\gamma}$, 
\item $\underline{\widehat{\scrE}}_q = {\widehat{\scrE}}_q$ for $\gamma\ge 0$ and $q\in I_\gamma\setminus I_{-\gamma}$,
\item $\underline{\widehat{\scrE}}_q = \{0\}$ for $\gamma\le 0$ and $q\in I_\gamma\setminus I_{-\gamma}$. 
\end{enumerate}
Then $\underline{\widehat\Delta}$ satisfies \rm{(E3)} for every sector $\Lambda\subset \cz \setminus \rz_+$: There exists a $C\ge0$ such that 
$$\|\lambda ( \lambda -\widehat{\underline\Delta})^{-1}\|_{\scrL(\calK^{0,\gamma}_2(X^\wedge))}\le C,
 \qquad 0\not=\lambda \in \Lambda.$$
\end{theorem}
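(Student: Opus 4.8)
The plan is to reduce Theorem \ref{thm:5.3} to the straight-cone case already handled in \cite[Theorems 5.6 and 5.7]{SchrSe} by treating the warped Laplacian $\widehat\Delta$ as a perturbation of its straight counterpart. Concretely, let $\widehat\Delta_s$ denote the model cone operator obtained by freezing $h$ at $t=0$, i.e. replacing $H(t)$ and $\Delta_t$ in \eqref{eq:Laplace_model} by $H(0)$ and $\Delta_0$ throughout; this is exactly a straight-cone Laplacian of the type analyzed in \cite{SchrSe}. First I would observe that $\widehat\Delta$ and $\widehat\Delta_s$ have the \emph{same} conormal symbol $f_0$ in \eqref{eq:f0}, hence the same sets $S_\gamma$, the same spaces $\widehat\scrE_q$, and the same description of minimal and maximal domains as in Proposition \ref{prop:5.1}. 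In particular the extension $\underline{\widehat\Delta}$ with domain \eqref{eq:domain} and the straight extension $\underline{\widehat\Delta_s}$ with the same boundary conditions $\ulwhE_q$ have literally the same domain in $\calK^{0,\gamma}_2(X^\wedge)$. Conditions (1)--(3) on the $\ulwhE_q$ are precisely those under which \cite[Theorems 5.6 and 5.7]{SchrSe} establish (E3) for $\underline{\widehat\Delta_s}$ with respect to any sector $\Lambda\subset\cz\setminus\rz_+$; the symmetry constraint $|\gamma|<(n+1)/2$ together with \eqref{eq:gamma} is what makes the list of admissible $q$ match up with the adjoint computed in Proposition \ref{prop:5.2}.

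Next I would quantify the difference $D:=\widehat\Delta-\widehat\Delta_s$. Writing it out from \eqref{eq:Laplace_model}, $D=t^{-2}\bigl((H(0)-H(t))(-t\partial_t)+(\Delta_t-\Delta_0)\bigr)$, and since $H(t)-H(0)=O(t)$ and $\Delta_t-\Delta_0=O(t)$ with smooth dependence, the coefficient functions carry an extra factor of $t$ near the tip. Thus $D$ maps $\calK^{s,\gamma}_2(X^\wedge)\to\calK^{s-2,\gamma+1}_2(X^\wedge)$, i.e. it gains one power of the weight compared with $\widehat\Delta$ itself; away from the tip it is compactly supported and of order at most $2$. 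The key point is that $D$, composed with the domain inclusion $\scrD(\underline{\widehat\Delta_s})\hookrightarrow\calK^{2,\gamma+2}_2(X^\wedge)\subset\calK^{1,\gamma+1}_2(X^\wedge)$, is relatively bounded with relative bound zero, and more precisely that $\bigl\|D(\lambda-\underline{\widehat\Delta_s})^{-1}\bigr\|_{\scrL(\calK^{0,\gamma}_2(X^\wedge))}\to0$ as $|\lambda|\to\infty$ in $\Lambda$. This I would get by interpolating the straight-cone resolvent estimate exactly as in \eqref{eq:interpolation}: since $(\lambda-\underline{\widehat\Delta_s})^{-1}$ is $O(|\lambda|^{-1})$ into $\calK^{0,\gamma}_2$ and $O(1)$ into $\scrD(\underline{\widehat\Delta_s})\subset\calK^{2,\gamma}_2$, complex interpolation gives $O(|\lambda|^{-1/2})$ into $\calK^{1,\gamma}_2$; combined with the one-power weight gain in $D$ (using that the cut-off near infinity is harmless because there $D$ is lower order with compactly supported coefficients, while near the tip the extra $t$ improves the weight), this yields the decay $O(|\lambda|^{-\delta})$ for some $\delta>0$.

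With this in hand the conclusion follows from the resolvent identity: exactly as in Section \ref{sec:03.3},
$$(\lambda-\underline{\widehat\Delta})^{-1}=(\lambda-\underline{\widehat\Delta_s})^{-1}\Bigl[1-D(\lambda-\underline{\widehat\Delta_s})^{-1}\Bigr]^{-1},$$
where the bracket is invertible with uniformly bounded inverse for $|\lambda|$ large by a von Neumann series, using the decay just established; here one uses that $\underline{\widehat\Delta}$ and $\underline{\widehat\Delta_s}$ have the same domain, so that $\lambda-\underline{\widehat\Delta}=(\lambda-\underline{\widehat\Delta_s})-D$ is a genuine operator identity on that domain. Hence $\lambda-\underline{\widehat\Delta}$ is invertible for $|\lambda|\ge R$ in $\Lambda$ with $\|(\lambda-\underline{\widehat\Delta})^{-1}\|=O(|\lambda|^{-1})$. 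To upgrade from "$|\lambda|\ge R$" to "all $0\neq\lambda\in\Lambda$" as stated, I would invoke the analogue of Proposition \ref{prop:independent} for the model cone operator: under conditions (1)--(3) the straight extension $\underline{\widehat\Delta_s}$ is invertible (not merely Fredholm of index zero) on all of $\Lambda\setminus\{0\}$, and since $\underline{\widehat\Delta}-\underline{\widehat\Delta_s}=D$ is a relatively compact perturbation preserving the index, $\lambda-\underline{\widehat\Delta}$ is Fredholm of index zero for every $\lambda\in\Lambda\setminus\{0\}$; injectivity for the remaining bounded range of $\lambda$ follows because a nontrivial kernel element would be smooth by elliptic regularity in the cone algebra and would, by the usual dilation/homogeneity argument for $X^\wedge$ together with the sector-of-minimal-growth property just proved for large $\lambda$, force a contradiction.

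The main obstacle I expect is the decay estimate $\|D(\lambda-\underline{\widehat\Delta_s})^{-1}\|=O(|\lambda|^{-\delta})$: one must be careful that the weight gain from the warping genuinely compensates for the fact that $D$ is of full order $2$ near the tip, and that the interpolation is performed in the correct scale of $\calK$-spaces — losing either the smoothness index or the weight index by too much would break the argument. A clean way around potential subtleties is to split $D=\omega D\omega + (1-\omega^2)D$ with a cut-off $\omega$ near $t=0$: the first piece is handled by the weight gain and interpolation as above, while the second is an interior second-order operator with compactly supported coefficients, hence maps $\calK^{2,\gamma}_2\to\calK^{0,\gamma}_2$ and, composed with the resolvent, is $O(|\lambda|^{-1/2})$ directly by interpolation with no weight bookkeeping needed.
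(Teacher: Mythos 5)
Your first paragraph already contains the entire proof, and it is exactly the paper's argument: after Propositions \ref{prop:5.1} and \ref{prop:5.2} identify the maximal domain and the adjoint, the paper simply observes that Theorems 5.6 and 5.7 of \cite{SchrSe} apply verbatim, since the model cone operator and the spaces $\wh{\scrE}_q$ depend only on $\Delta_0=\Delta_{h(0)}$, and conditions (1)--(3) are precisely the hypotheses of those theorems.

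Everything after that, however, is built on a misreading of what $\wh\Delta$ is. By definition \eqref{eq:intro03} (and explicitly in \eqref{eq:Laplace_model}) the model cone operator has its coefficients frozen at $t=0$: it is $t^{-2}\big((t\partial_t)^2-(n-1+H(0))(-t\partial_t)+\Delta_0\big)$ with $H(0)=0$ because $2H(t)=t\partial_t\log\det h(t)$ vanishes at $t=0$ for a smooth family $h$ (consistently, no $H$-term appears in the conormal symbol \eqref{eq:f0}). Hence your $\wh\Delta_s$ \emph{is} $\wh\Delta$, the perturbation $D=\wh\Delta-\wh\Delta_s$ is identically zero, and the relative-bound estimates, the interpolation step, the von Neumann series, and the Fredholm/index discussion all address a problem that does not exist. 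What you call the ``main obstacle'' (decay of $D(\lambda-\underline{\wh\Delta}_s)^{-1}$) therefore never arises; Theorem \ref{thm:5.3} concerns only the model operator, and the warping enters the paper elsewhere --- through $f_1$, the correspondence $\Theta$ between domains of $\Delta$ and $\wh\Delta$ (Theorem \ref{thm:theta}), and the resolvent construction of Section \ref{sec:03} --- not here. Note also that a genuinely $t$-dependent ``warped operator'' on $X^\wedge$, which your $D$ implicitly presupposes, is not even defined, since $h(t)$ exists only for $t\in[0,1)$. Finally, the passage from large $|\lambda|$ to all $0\neq\lambda\in\Lambda$, which you treat rather vaguely, is not an extra step in the correct argument: \cite[Theorems 5.6, 5.7]{SchrSe} give the estimate on the full punctured sector, essentially because $\wh\Delta$ is $\kappa$-homogeneous of degree $2$ and the admissible domains \eqref{eq:domain} are dilation-invariant, so the resolvent bound propagates along rays.
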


In principle, the associated domains of $\Delta$ can be determined as described in Section \ref{subsec:example}. 
But since the eigenvalues $\lambda_j$ are not known explicitly, it seems not feasible to 
write down a formula for the domains. 
See, however, the following Section \ref{sec:porous} for a more specific situation.

\subsection{The porous medium equation on manifolds with warped cones}\label{sec:porous}
The porous medium equation is the quasilinear diffusion equation
\begin{eqnarray}\label{eq:PME}
u'(t)-\Delta(u^{m}(t))=f(u,t),\  t\in(0,T_0],
\end{eqnarray}
with initial condition  $u(0)=u_{0}$. 
It describes the flow of a gas in a porous medium; here $u$ is the density of the gas, $m>0$ and $f$ is a forcing term which we assume for simplicity holomorphic in $u$ and Lipschitz in $t$. 

In \cite{RoidosSchrPME} the porous medium equation has been studied on a manifold with straight conical singularities. 
We will next show how this analysis can be extended to the case of warped cones.  

The setting is the same as in \cite{RoidosSchrPME}: We let $\overline \varepsilon = -q_1^->0$ and fix $\gamma$ with 
\begin{eqnarray}\label{eq:gamma2}
\frac{n-3}2<\gamma<\frac{n-3}2+\min \{\overline\varepsilon, 2\Big\}.
\end{eqnarray}
Then none of the poles $q_j^\pm$ lies on the line $\re z = (n+1)/2-\gamma-2$. Condition (E2) requires that also no pole lies on $\re z = (n+1)/2-\gamma$. In case $n\ge 3$ this is automatically true, since then $q_1^+>2$. In case $n=1$ or $n=2$ we additionally require it. 

In view of \eqref{eq:gamma2} we  write $\gamma = (n-3)/2+\delta$ for some 
$0<\delta <\min\{\overline\varepsilon,2\}$. Then 
  $$I_\gamma =(-\delta, 2-\delta), \qquad I_{-\gamma} = (n-3+\delta, n-1+\delta).$$ 
So $I_\gamma$ always contains $q_0^-=0$, but none of the $q_j^-$ for $j>0$, 
whereas $I_{-\gamma}$ contains $q_0^+$, 
but none of the $q^+_j$ for $j>0$. Moreover, $I_{\gamma}\cap I_{-\gamma}$ contains at most the elements 
$q^-_0=0$ and $q^+_0=n-1$.
In fact, for  $n=1$ we have $I_\gamma\cap I_{-\gamma}=\{0\}$. 
For $n=2$, the intersection contains both $q_0^-=0$ and $q_0^+=1$, provided $\delta<1$, else it is empty.
For $n\ge3$, the intersection is always empty. 

\subsubsection{The space $\scrE_0$}

We shall use the notation from Section \ref{sec:02.2} and from the beginning of Section \ref{sec:warped}, 
in particular \eqref{eq:f0} and \eqref{eq:f1}. 
Let us analyze the space $\scrE_0=\theta_0^{-1}\wh{\scrE}_0$ associated with $\Delta$. 

The principal part of $f_0(z)^{-1}$ in $z=q_j^\pm$ is 
 $$(\Pi_{q_j^\pm}f_0^{-1})(z)=\pm\frac{\pi_j}{q_j^+-q_j^-}(z-q_j^\pm)^{-1},\qquad j\ge 1;$$
for $j=0$ the same formula holds in case $n\ge2$, while for $n=1$ the principal part in $z=0$ is $\pi_0 z^{-2}$. 

\textbf{The case $\mathbf{n\ge 2}$:} If $\delta<1$ then $G_0=G_0^{(0)}$ by definition. 
For $\delta\ge1$, the fact that $\dot\Delta_0$ is a differential operator without constant term implies that $\dot\Delta_0\pi_0=0$ $($recall that $\pi_0$ is 
the projection onto the space $E_0$ of locally constant functions on $\partial\bz)$, and so
\begin{align*}
 g_1(z)\Pi_{0}((f_0^{-1})(z)\wh{u}(z))&=-f_0(z-1)^{-1}(\dot\Delta_0-\dot H(0)z)\frac{\pi_0\wh{u}(0)}{n-1}\frac{1}{z}\\
 &=-f_0(z-1)^{-1}\dot H(0)\frac{\pi_0\wh{u}(0)}{n-1}.
\end{align*}
As $f_0(z-1)^{-1}$ is holomorphic in $z=0$, 
$G_0^{(1)}=0$ and $G_0=G_0^{(0)}$, again. Thus 
 $$\wh{\scrE}_0=\scrE_0=1\otimes E_0.$$
\textbf{The case $\mathbf{n=1}$:} We calculate 
\begin{align*}
G_0^{(0)} u &= \int_{|z|<\varepsilon} t^{-z}\Pi_0(f_0(z)^{-1} \widehat u(z))\, dz\\
&= \int_{|z|<\varepsilon} t^{-z}
\pi_0\left(\frac{\widehat u(0)}{z^2}+\frac{\widehat u'(0)}{z}\right)\,dz=
\log t \ \pi_0\widehat u(0) +\pi_0\wh{u}^\prime(0),  
\end{align*}
showing that 
 $$\widehat{\scrE}_0 = \big\{e_0 + e_1\log t\mid e_0,e_1\in E_0\big\}=1\otimes E_0+ \log t\otimes E_0.$$
By definition, $G_0=G_0^{(0)}$ for $\delta<1$, while for $\delta\ge1$,  similarly as before, 
\begin{align*}
g_1(z)(\Pi_{0}f_0^{-1}\widehat u)(z)
=&-f_0(z-1)^{-1}(\dot\Delta_0-\dot H(0)z)\pi_0\left(\frac{\widehat u(0)}{z^2}+\frac{\widehat u'(0)}{z}\right)\\
=&f_0(z-1)^{-1}\dot H(0)\pi_0\left(\frac{\widehat u(0)}{z}+\widehat u'(0)\right),
\end{align*}
By definition of $\overline \varepsilon$, $f_0^{-1}$ is holomorphic in $z=-1$. Hence
$$G_0^{(1)}u = t a(x) \pi_0\widehat u(0),\qquad a(x):=f_0^{-1}(-1) \dot H(0)\in\scrC^\infty(\partial\bz).$$
We conclude that  
\begin{align*}
\scrE_0 =
\begin{cases}
 \big\{e_0 + e_1(\log t +t a(x))\mid e_0,e_1\in E_0\big\}, &\delta\ge1 \\[1mm]
\widehat\scrE_0,& \delta<1.
 \end{cases}
\end{align*} 
The isomorphism $\theta_0:\scrE_0\to\wh{\scrE}_0$ is the identity map in case $\delta<1$, otherwise 
 $$\theta_0(e_0 + e_1(\log t +t a(x)))=e_0 + e_1\log t,\qquad e_0,e_1\in E_0.$$

\subsubsection{A closed extension of $\Delta$}

In the following, we consider the closed extension $\underline \Delta$ of the Laplacian in 
$\calH^{s,\gamma}_p(\bz)$ defined by 
\begin{equation}\label{eq:extD}
 \scrD(\underline\Delta) = \calH_p^{s+2,\gamma+2}(\bz)\oplus \omega\underline{\scrE}_0,
 \qquad \underline{\scrE}_0:=1\otimes E_0. 
\end{equation}
By the relations obtained in the previous subsection we find that the associated extension $\wh{\underline{\Delta}}$ 
of the model cone operator $\wh{\Delta}$ is given by 
\begin{equation}\label{eq:extDhut}
 \scrD(\wh{\underline\Delta}) = \calK_p^{s+2,\gamma+2}(X^\wedge)\oplus \omega\underline{\wh{\scrE}}_0,
 \qquad \underline{\wh{\scrE}}_0=\underline{\scrE}_0=1\otimes E_0. 
\end{equation}

\begin{proposition}\label{prop:hatDelta}
$-\underline \Delta$ satisfies the 
ellipticity conditions $\mathrm{(E1)-(E3)}$ of  Section $\mathrm{\ref{sec:03}}$. 
Moreover, its spectrum is a subset of $\overline \rz_+$. 
\end{proposition}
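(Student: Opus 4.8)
The plan is to verify each of the three conditions (E1), (E2), (E3) for the extension $-\underline\Delta$ with domain \eqref{eq:extD}, using the preparatory results collected in Theorem \ref{thm:5.3}, and then to locate the spectrum. First I would check (E1): since $\Delta$ is a conically degenerate Laplacian, its homogeneous principal symbol is (up to sign) $|\xi|_g^2$ and its rescaled principal symbol is $\tau^2 + |\xi|_{h(0)}^2$; both are nonnegative and vanish only at the origin, so $\lambda - \sigma_\psi^2(-\Delta)$ and $\lambda-\widetilde\sigma_\psi^2(-\Delta)$ are invertible for every $\lambda$ in any sector $\Lambda\subset\cz\setminus\rz_+$. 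This is the same observation already made in Section \ref{sec:applications} that (E1) always holds for $\Delta$. Next, (E2): by \eqref{eq:f0inv} the non-bijectivity points of $\sigma_M^2(\Delta)$ are exactly the $q_j^\pm$, and the condition \eqref{eq:gamma2} on $\gamma$ together with the extra hypothesis imposed in the cases $n=1,2$ guarantees that neither $\frac{n+1}{2}-\gamma$ nor $\frac{n+1}{2}-\gamma-2$ is among them; hence \eqref{eq:gamma} holds and (E2) follows. In particular $\scrD_{\min}(\Delta)=\calH^{s+2,\gamma+2}_p(\bz)$, so the extension \eqref{eq:extD} makes sense and has the stated form.

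The heart of the argument is (E3), i.e.\ that $\Lambda$ is a sector of minimal growth for the model cone operator $\wh{\underline\Delta}$ acting in $\calK^{0,\gamma}_2(X^\wedge)$ with domain \eqref{eq:extDhut}. Here I would invoke Theorem \ref{thm:5.3} directly: I must check that the choice $\underline{\wh\scrE}_0 = 1\otimes E_0$ and $\underline{\wh\scrE}_q=\{0\}$ for all other $q\in I_\gamma$ satisfies conditions (1)--(3) of that theorem for the weight $\gamma=(n-3)/2+\delta$. From the analysis of $I_\gamma$ and $I_{-\gamma}$ in Section \ref{sec:porous} we know $I_\gamma\cap I_{-\gamma}$ is $\{0\}$ when $n=1$, $\{0,n-1\}$ (when $\delta<1$) or empty when $n=2$, and empty when $n\ge 3$; also $q_0^-=0\in I_\gamma$ and $q_0^+=n-1\in I_{-\gamma}$. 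For the overlap point $q=0$: when $n>1$ we have $\wh\scrE_0 = t^{q_0^+}\otimes E_0 = 1\otimes E_0$ (since $q_0^+=n-1$ and $q_0^-=0$ coincide only in giving exponent $0$ — more precisely for $n>1$, $q_0^\pm$ are $0$ and $n-1$), the orthogonal-complement recipe gives $\underline{\wh\scrE}_0^\perp = t^{-q_0^+}\otimes\{0\}^\perp = t^{-(n-1)}\otimes E_0=\wh\scrE_{q_0^+}$, matching $\underline{\wh\scrE}_{n-1-0}=\underline{\wh\scrE}_{n-1}$; but $n-1\in I_\gamma$ only if $n-1<2-\delta$, i.e.\ for $n\le 2$, so for $n\ge 3$ there is nothing to match and (1) is vacuous, while for $n=1,2$ one checks the bookkeeping against the special $n=1$ rule ($\underline{\wh\scrE}_0=1\otimes E_0$ forces $\underline{\wh\scrE}_0^\perp=1\otimes E_0$) and against $q_0^+=n-1$. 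Condition (2) (resp.\ (3)) concerns $q\in I_\gamma\setminus I_{-\gamma}$: for $\gamma>0$ (which holds since $\delta>0$ when $n\ge 3$, and is our standing case) we need $\underline{\wh\scrE}_q=\wh\scrE_q$ there; but $I_\gamma=(-\delta,2-\delta)$ contains no $q_j^\pm$ other than $q_0^-=0$, and $0\in I_{-\gamma}$ exactly when $n\le 2$, so for $n\ge 3$ the point $0$ lies in $I_\gamma\setminus I_{-\gamma}$ and we need $\underline{\wh\scrE}_0=\wh\scrE_0=1\otimes E_0$, which is our choice; for $n\le 2$, $0\in I_\gamma\cap I_{-\gamma}$ so it is governed by (1) instead, and $I_\gamma\setminus I_{-\gamma}$ contains no pole, making (2)--(3) vacuous. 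Thus Theorem \ref{thm:5.3} applies and yields (E3) for every $\Lambda\subset\cz\setminus\rz_+$.

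Finally, for the spectral statement I would argue that $-\underline\Delta$ is self-adjoint and nonnegative when $p=2$ and $s=0$: the extension \eqref{eq:extD} with $\underline\scrE_0=1\otimes E_0$ is precisely the one whose adjoint (by Proposition \ref{prop:5.2}, applied analogously to $\Delta$ on $\bz$, i.e.\ the on-$\bz$ counterpart noted in \cite[Theorem 5.3]{SchrSe}) has domain $\calH^{s+2,-\gamma+2}_2(\bz)\oplus\omega\underline\scrE_0^\perp$ with $\underline\scrE_0^\perp=1\otimes E_0$; combined with the symmetry hypothesis \eqref{eq:gamma} this identifies $-\underline\Delta$ with its adjoint, hence self-adjoint, and nonnegativity follows from the Dirichlet-form structure of $\Delta$ on $\scrC^\infty_\comp(\mathrm{int}\,\bz)$. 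A self-adjoint nonnegative operator has spectrum in $\overline\rz_+$, and by Proposition \ref{prop:independent} the spectrum is independent of $s$ and $p$, so the conclusion holds in all $\calH^{s,\gamma}_p(\bz)$. The main obstacle is the case analysis in verifying the hypotheses of Theorem \ref{thm:5.3}, especially keeping straight the exceptional behaviour at $q_0^\pm=0$ when $n=1$ (double pole, logarithmic term) and the overlap $I_\gamma\cap I_{-\gamma}$ when $n=2$; the self-adjointness argument is routine once the matching of $\underline\scrE_0$ with $\underline\scrE_0^\perp$ is in place.
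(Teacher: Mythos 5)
Your treatment of (E1), (E2) and of (E3) via Theorem \ref{thm:5.3} follows the paper's route, but the bookkeeping for the hypotheses of Theorem \ref{thm:5.3} is wrong in the cases $n\le 2$. It is not true that $I_\gamma=(-\delta,2-\delta)$ contains no pole besides $q_0^-=0$: for $n=2$ with $\delta<1$ it also contains $q_0^+=1$ and possibly finitely many $q_j^+$, $j\ge1$, and for $n=1$ it may contain poles $q_j^+$ as well (this is stated explicitly in the paper's proof of Theorem \ref{thm:delta}). Consequently (2)--(3) are not vacuous for $n\le2$; they hold because $\underline{\wh\scrE}_{q_j^+}=\{0\}$ (for $\gamma\le0$ this is exactly (3)) and because for $\gamma\ge0$ the constraint $\delta<\overline\eps$ forces $q_j^+>1+\delta\ge 2-\delta$, so no such pole survives in $I_\gamma\setminus I_{-\gamma}$. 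Moreover your orthogonal-complement computation at $q=0$ is inverted: since $\underline{\wh\scrE}_0=t^{-q_0^-}\otimes E_0$ with the \emph{full} space $E_0$, the recipe gives $\underline{\wh\scrE}_0^\perp=t^{-q_0^+}\otimes E_0^\perp=\{0\}$, not $t^{-(n-1)}\otimes E_0$; with your formula, condition (1) would actually \emph{fail} against the choice $\underline{\wh\scrE}_{n-1}=\{0\}$, whereas the correct computation (together with $\underline{\wh\scrE}_{q_0^+}^\perp=1\otimes E_0=\underline{\wh\scrE}_0$, and the special rule for the double pole when $n=1$) is what confirms it. So the (E3) verification needs to be redone carefully for $n\le 2$; it does go through, but not for the reasons you give.

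The spectral statement is a genuine gap. You argue that $-\underline\Delta$ is self-adjoint and nonnegative in $\calH^{0,\gamma}_2(\bz)$, but for the weights allowed by \eqref{eq:gamma2} one has in general $\gamma\ne 0$, and the adjoint furnished by Proposition \ref{prop:5.2} (and by the pairing used throughout Section \ref{sec:applications}) is taken with respect to the $\calH^{0,0}_2$-pairing, so it acts in the \emph{different} space $\calH^{\cdot,-\gamma}_2(\bz)$; self-adjointness in $\calH^{0,\gamma}_2(\bz)$ is not even meaningful, and $\Delta$ is not symmetric for the natural inner product of that weighted space. In addition, your claim $\underline\scrE_0^\perp=1\otimes E_0$ contradicts the paper's own computation in the proof of Theorem \ref{thm:delta}, where for $n=2$ one finds $\scrE_0^\perp=\scrE_{q_0^-}\oplus\scrE_{q_1^-}\oplus\ldots\oplus\scrE_{q_N^-}$, which is strictly larger than $1\otimes E_0$ as soon as extra poles $q_j^+$ lie in $I_\gamma$. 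The paper instead obtains the inclusion of the spectrum in $\rpbar$ by the argument of \cite[Theorem 4.1]{RoidosSchr} (discreteness of the spectrum and its independence of $s$ and $p$, combined with a direct argument excluding eigenvalues off $\rpbar$); some substitute of this kind is needed, since nonnegative self-adjointness in the given weighted space is not available.
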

\begin{proof}
By the choice of $\gamma$, $-\underline \Delta$ satisfies (E1) and (E2). (E3) holds, since 
$\wh{\underline\Delta}$ with the domain \eqref{eq:extDhut} satisfies the assumptions of Theorem \rm{\ref{thm:5.3}}.
The same argument as for the proof of \cite[Theorem 4.1]{RoidosSchr} shows that the spectrum is a subset of $\overline\rz_+$.
\end{proof} 

\forget{
\begin{lemma}\label{lem:hatDelta} The choice 
$$\calD(\underline{\widehat\Delta} ) = \calK^{2,\gamma+2}_2(X^\wedge)\oplus \omega\underline{\widehat\scrE}_0,\qquad 
    \underline{\widehat\scrE}_0=\underline{\scrE}_0,$$ 
satisfies the assumptions in Theorem \rm{\ref{thm:5.3}}, and thus \rm{(E3)} holds.
\end{lemma}
\begin{proof}
Indeed, for $\gamma\ge0$ the set $I_\gamma\setminus I_{-\gamma}$ does not 
contain any of the $q_j^\pm$, whereas for $\gamma<0$ we have 
$\underline{\widehat\scrE}_{\sigma}=\{0\}$ for all $\sigma$ in 
$I_\gamma\setminus I_{-\gamma}$.

It remains to observe that the preimage of $\underline{\widehat\scrE}_{0}$ 
under the isomorphism $\theta_0$ coincides with $\underline{\scrE}_0$, i.e., 
consists of the constant functions only. 
To this end we note that according to \eqref{eq:gsigma}, 
$G_0 = G_0^{(0)}$ for $\delta<1$. 
For $\delta\ge 1 $ on the other hand, the operator $G_0^{(1)}$ is zero 
by the considerations at the end of Section \ref{sect:domain}, 
since then $f_0^{-1}$ is holomorphic near $z=-1$. 
\end{proof}
}

\begin{theorem}\label{thm:delta}
Let $c>0$, $1<p<+\infty$ and $s\in \rz$. 
Then $c-\underline \Delta$ has a bounded $H_\infty$-calculus on $\calH^{s,\gamma}_p(\bz)$.
\end{theorem}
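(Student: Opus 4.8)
The plan is to invoke Theorem~\ref{thm:hinfty} after checking that its hypotheses are met by the operator under consideration. Specifically, Theorem~\ref{thm:delta} claims the bounded $H_\infty$-calculus of $c-\underline\Delta$ on $\calH^{s,\gamma}_p(\bz)$ for arbitrary $c>0$, $1<p<+\infty$ and $s\in\rz$, where $\underline\Delta$ is the extension with domain \eqref{eq:extD} and $\gamma$ is chosen as in \eqref{eq:gamma2}. Proposition~\ref{prop:hatDelta} already establishes that $-\underline\Delta$ satisfies the ellipticity conditions (E1), (E2) and (E3), and moreover that its spectrum is contained in $\overline{\rz}_+$. Thus $-\underline\Delta$ is parameter-elliptic with respect to any sector $\Lambda(\theta)\subset\cz\setminus\rz_+$, and in particular we may take $\theta$ arbitrarily small.

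First I would fix such a sector $\Lambda=\Lambda(\theta)$ with $0<\theta<\pi$ and apply Theorem~\ref{thm:normest} to $-\underline\Delta$: there is a constant $c_0\ge 0$, independent of $s$ and $p$, such that $\eta^\mu-(-\underline\Delta)$ is invertible with the resolvent estimate $\|(\eta^\mu+\underline\Delta)^{-1}\|\le C_{s,p}\spk{\eta}^{-\mu}$ for $\eta\in\Sigma$, $|\eta|\ge c_0$ (here $\mu=2$). By Theorem~\ref{thm:hinfty}, $-\underline\Delta+c_0$ then has a bounded $H_\infty$-calculus on $\calH^{s,\gamma}_p(\bz)$ for every $s\ge 0$ and $1<p<+\infty$. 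Since the spectrum of $-\underline\Delta$ is contained in $\overline{\rz}_+$, we may in fact take $c_0=0$: for any $c>0$ the operator $c-\underline\Delta$ is invertible, its spectrum lies in $[c,+\infty)$, and the resolvent estimate in the sector $\Lambda$ holds without any shift because $\Lambda$ stays away from $[c,+\infty)$; one verifies this directly from the structure \eqref{eq:schrohe2} together with Theorem~\ref{thm:normest}, or by the argument used for \cite[Theorem~4.1]{RoidosSchr}. Hence Theorem~\ref{thm:hinfty} applies with $c$ in place of $c_0$, giving the bounded $H_\infty$-calculus of $c-\underline\Delta$ on $\calH^{s,\gamma}_p(\bz)$ for all $s\ge 0$.

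It remains to remove the restriction $s\ge 0$ and allow all $s\in\rz$. For $s<0$ I would argue by duality: the adjoint of $c-\underline\Delta$ acting on $\calH^{s,\gamma}_p(\bz)$ is a closed extension of $c-\Delta^t=c-\Delta$ acting on the dual space, which is again a weighted Mellin--Sobolev space with a non-negative smoothness index and a conjugate integrability exponent (cf.\ Proposition~\ref{prop:5.2} and the discussion of adjoints following condition (E2)). Since the $H_\infty$-calculus is stable under passing to adjoints on reflexive spaces — with the same bound and the same sector — the already established case $s\ge 0$ for the dual operator yields the case $s<0$ for $c-\underline\Delta$. One has to check that the adjoint extension is itself of the form covered by the previous steps, i.e.\ that the adjoint boundary condition $\underline{\scrE}_0^\perp$ again satisfies the hypotheses of Theorem~\ref{thm:5.3}; this is exactly the symmetry built into conditions (1)--(3) of that theorem and into the choice $\underline{\scrE}_0=1\otimes E_0$, whose orthogonal complement behaviour was recorded in Section~\ref{sec:porous}.

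The main obstacle I anticipate is the bookkeeping for $s<0$: one must confirm that the duality argument genuinely reduces to the case already treated, which requires identifying the adjoint of the specific extension \eqref{eq:extD} and seeing that it falls under Proposition~\ref{prop:hatDelta} (or a minor variant of it). Everything else is a matter of assembling Theorems~\ref{thm:normest}, \ref{thm:hinfty} and \ref{thm:5.3} together with the spectrum localization from Proposition~\ref{prop:hatDelta}; no new analysis is needed, only careful invocation of the results already established.
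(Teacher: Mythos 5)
Your overall route is the same as the paper's: the case $s\ge 0$ follows from Proposition \ref{prop:hatDelta} together with Theorems \ref{thm:normest} and \ref{thm:hinfty} (and the spectrum localization to dispose of the shift), and the case $s<0$ is handled by duality, showing that $(-\underline\Delta)^*$ itself satisfies (E1)--(E3) on $\calH^{-s,-\gamma}_{p'}(\bz)$ and then passing back via $(-\underline\Delta)^{**}=-\underline\Delta$.

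However, there is a genuine gap exactly where you flag an "anticipated obstacle": the identification of the adjoint extension and the verification of (E3) for it is not bookkeeping, and it is not "recorded in Section \ref{sec:porous}". Proposition \ref{prop:5.2} describes the adjoint of extensions of the \emph{model cone operator} $\wh\Delta$, not of $\underline\Delta$; the domain of $\underline\Delta^*$ must be computed as $\calH^{2,-\gamma+2}_2(\bz)\oplus\omega\scrE_0^\perp$, where $\scrE_0^\perp$ is the orthogonal space with respect to the pairing $[u,v]'_\Delta=(\Delta u,v)_{0,0}-(u,\Delta v)_{0,0}$ on $\scrE_{\gamma,\max}\times\scrE_{-\gamma,\max}$. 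In the paper this is the bulk of the proof: for $n\ge3$ one is in the maximal/minimal situation, but for $n=2$ and $n=1$ one must show $\scrE_0^\perp=\scrE_{q_0^-}\oplus\scrE_{q_1^-}\oplus\cdots\oplus\scrE_{q_N^-}$ (respectively $\underline\scrE_0\oplus\scrE_{q_1^-}\oplus\cdots\oplus\scrE_{q_N^-}$ for $n=1$), which requires the explicit limit computation with $\omega_0(t)=\omega(t/\eps)$, the warped-metric terms $H(t)$ and $\mu_t$, dominated convergence, and the non-positivity of the $q_j^-$; only then can one apply Theorem \ref{thm:5.3} to the model cone extension determined (via the maps $\theta_\sigma$) by $\scrE_0^\perp$. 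Moreover, your implicit expectation that the adjoint's model cone operator is just the adjoint of $\wh{\underline\Delta}$ (so that "a minor variant of Proposition \ref{prop:hatDelta}" suffices) is exactly what the paper warns against: the Example following the proof shows $\wh{(\ulA^*)}\ne(\ulwhA)^*$ in general, so the symmetry in conditions (1)--(3) of Theorem \ref{thm:5.3} cannot be invoked without first computing $\scrE_0^\perp$ and its image under $\Theta$. Until that computation is supplied, the $s<0$ half of the theorem is not proved.
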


Note that here, in contrast to Theorem \ref{thm:hinfty}, also negative $s$ are allowed. 

\begin{proof}[Proof of Theorem $\ref{thm:delta}$]
Proposition \ref{prop:hatDelta} and Theorem \ref{thm:hinfty} show the existence of a bounded $H_\infty$-calculus  for $s\ge 0$. 

To cover negative values of $s$, we shall show that the adjoint operator $(-\underline\Delta)^*$ also satisfies the assumptions of 
Theorem \ref{thm:hinfty}, hence admits a bounded $H_\infty$-calculus on $\calH^{-s,-\gamma}_{p^\prime}(\bz)$. 
Thus its adjoint  $(-\underline\Delta)^{**}=-\underline\Delta$ admits a bounded $H_\infty$-calculus on $\calH^{s,\gamma}_p(\bz)$. Clearly, $(-\underline\Delta)^*$ satisfies (E1) and  (E2), so it suffices to check (E3).
We consider separately the cases $n\ge3$, $n=2$, and $n=1$. 

\textbf{The case $\mathbf{n\ge 3}$:} The operator $\underline\Delta$  coincides with the maximal extension of $\Delta$. 
Hence its adjoint is the minimal extension with domain $\calH^{2,-\gamma+2}_{2}(\bz)$. Accordingly, the model cone operator associated with 
$(\underline\Delta)^*$ is the minimal extension of $\wh{\Delta}$ and coincides with the adjoint of the maximal extension of $\wh{\Delta}$, i.e., 
with the adjoint of $\wh{\underline\Delta}$. Hence it is clear that $(-\underline\Delta)^*$ satisfies (E3). 

\textbf{The case $\mathbf{n= 2}$:} Recall that then $\underline{\scrE}_{0}=\scrE_0$. 
In case $\delta>1$, the only pole of the inverted conormal symbol in $I_\gamma=(-\delta,2-\delta)$ is $q_0^-=0$. 
In this case, $\underline{\Delta}$ coincides with the maximal extension of $\Delta$ and we can argue as before. 
So let us assume $\delta<1$ $($the case $\delta=1$ is excluded by the assumptions$)$.
Then the poles in $I_\gamma$ are 
$q_0^-=0$, $q_0^+=1$ and possibly a finite number of  $q_j^+$, $j=1,\ldots,N$, which are larger than 1 and 
smaller than $2-\delta$. Then $I_{-\gamma}=(-1+\delta,1+\delta)$ contains the poles $q_0^-=0$, 
$q_0^+=1$, and $q_j^-$, $j=1,\ldots,N$. Now write 
 $$\scrD_{\gamma,\max}(\Delta):=\calH^{2,\gamma+2}_2(\bz)\oplus \omega\scrE_{\gamma,\max},\qquad 
     \scrE_{\gamma,\max}:=\mathop{\mbox{\LARGE$\oplus$}}_{\sigma\in I_\gamma} \scrE_\sigma,$$
where $\scrE_\sigma=\theta^{-1}_\sigma\wh{\scrE}_\sigma$. 
Analogously  define  $\scrD_{-\gamma,\max}(\Delta)$. 
Then it is known that 
 $$[u,v]_\Delta:=(\Delta u,v)_{0,0}-(u,\Delta v)_{0,0}$$ 
yields a non-degenerate pairing on $\scrD_{\gamma,\max}(\Delta)\times \scrD_{-\gamma,\max}(\Delta)$ 
which vanishes whenever one of the entries belongs to the corresponding minimal domain, see e.g. \cite[Section 3]{GiMe}. 
Hence we obtain a non-degenerate pairing 
 $$[u,v]^\prime_\Delta:=[\omega_0 u,\omega_1 v],
     \qquad (u,v)\in \scrE_{\gamma,\max}\times \scrE_{-\gamma,\max},$$ 

which does not depend on the choice of the cut-off functions $\omega_0$ and $\omega_1$.    
Moreover, denoting by  ${\scrE}^\perp_0$ the space orthogonal to ${\scrE}_0$ with respect 
to $[\cdot,\cdot]^\prime$, the domain of $(\underline\Delta)^*$ is given by  
$\calH^{2,-\gamma+2}_2(\bz)\oplus\omega{\scrE}^\perp_0$. 

We shall now verify that $\scrE_0^\perp=\scrE_{q_0^-}\oplus\scrE_{q_1^-}\oplus\ldots\oplus \scrE_{q_N^-}$. 
Since $\scrE_0$ has dimension $d:=\mathrm{dim}\,E_0$, $\scrE^\perp_0$ has co-dimension $d$ in  $\scrE_{-\gamma,\max}$. 
Hence it suffices to show that $\scrE_{q_j^-}\subset\scrE_0^\perp$ for every $j=0,\ldots,N$ $($recall that 
$\mathrm{dim}\,\scrE_{q_0^+}=\mathrm{dim}\,\scrE_1=\mathrm{dim}\,\wh{\scrE}_1=d$$)$. 
According to Theorem \ref{thm:theta}, the elements of $\scrE_{q_j^-}$ are of the 
form $t^{-q_j^-}e_j $, $e_j \in E_j$. 

Let $u=c\in\scrE_0$ and $v=e_jt^{-q_j^-}\in\scrE_{q_j^-}$. Fixing $\omega_1$ and choosing $\omega_0$ so that 
$\omega_1\equiv 1$ on the support of $\omega_0$, we find that 
\begin{equation}\label{eq:pairing}
 [u,v]^\prime_\Delta=(c\Delta\omega_0,e_jt^{-q_j^-})_{0,0}-(c\omega_0,\Delta(e_jt^{-q_j^-}))_{0,0}.
\end{equation}
The inner product is that of $L^2((0,1)\times X,t^nd\mu_t\,dt)$, $n=2$, where the measure $\mu_t$ refers to the Riemannian metric $h(t)$ 
on $X$ $($note that both $\omega_0$ and $\Delta\omega_0$ are supported in $[0,1))$. 
We can take $\omega_0(t)=\omega(t/\eps)$ for a fixed cut-off function $\omega$ and arbitrary 
$\eps>0$ sufficiently small. Since then 
 $$|\omega(t/\eps)\cdot\Delta(e_jt^{-q_j^-})|\le \omega_1(t)|\Delta(e_jt^{-q_j^-})|\in L^1,$$
the second term on the right-hand side of \eqref{eq:pairing} converges to $0$ as $\eps\to0$ by Lebesgue's theorem on dominated convergence. 
Moreover,
\begin{eqnarray*}
\Delta (\omega(t/\eps))&=&
t^{-2}[((t\partial_t)^2\omega)(t/\eps)+(1+H(t))(t\partial_t\omega)(t/\eps)]\\
&=&
\eps^{-2}[\varphi_1(t/\eps)+H(t)\varphi_2(t/\eps)]
\end{eqnarray*}
for suitable $\varphi_1,\varphi_2\in\scrC^\infty_\comp((0,1)).$
Thus, after the change of variables $s=t/\eps$,  the first term on the right-hand side of \eqref{eq:pairing} is  
 $$(c\Delta(\omega(t/\eps)),e_jt^{-q_j^-})_{0,0}
     =\eps^{1-q_j^-}\int_0^1\int_Xc(x)[\varphi_1(s)+H(\eps s)\varphi_2(s)]\overline{e_j(x)}s^{-q_j^-}s^2d\mu_{\eps s}\,ds.$$
Since all $q_j^-$ are non-positive, this expression vanishes as $\eps$ tends to $0$. 

Hence $\scrE_0^\perp$ is as claimed and $(-\underline\Delta)^*$ satisfies (E3) by Theorem \ref{thm:5.3}. 

\textbf{The case $\mathbf{n= 1}$:} The argument is very similar to that for $n=2$. 
The interval $I_\gamma$ contains the $($double$)$ pole 
$q_0^+=q_0^-=0$ and possibly a finite number of poles $q_j^+$, $j=1,\ldots,N$. Then $I_{-\gamma}$ contains the poles $0$ 
and $q_j^-$, $j=1,\ldots,N$. One shows now that 
$\underline{\scrE}_0^\perp=\underline{\scrE}_0\oplus \scrE_{q_1^-}\oplus\ldots\oplus \scrE_{q_N^-}$. 
To this end, let first $v\in \scrE_{q_j^-}$ for $j\ge1$. Since the poles in all the $q^-_j$, $j\ge1$, are simple, Theorem  \ref{thm:theta} implies that  $v$ has the form 
 $$v(t,x)=a(x)t^{-q_j^{-}}+t^{1-q_j^{-}}\sum_{k=0}^1 a_k(x)\log^kt$$
for certain functions $a,a_0,a_1$. 
Following the above calculations, 
using the measure $t\,d\mu_t\,dt$, it is easy to see that $v$ is perpendicular to $\underline{\scrE}_0$.  
If $u=c_0,v=c_1\in\underline{\scrE}_0$ are two locally constant functions, the above argument shows that 
\begin{align*}
 [u,v]^\prime_\Delta
&=\lim_{\eps\to0}\int_0^1\int_Xc_0(x)\overline{c_1(x)}
   s^{-2}[(s\partial_s)^2\omega(s)+H(\eps s)(s\partial_s)\omega(s)]s\,d\mu_{\eps s}\,ds\\
&=\int_Xc_0(x)\overline{c_1(x)}\,d\mu_0\,\int_0^1 (s\partial_s)^2\omega(s)\,\frac{ds}s
    =0.
\end{align*}
Hence $\underline{\scrE}_0\subset\underline{\scrE}_0^\perp$. Since both $\underline{\scrE}_0^\perp$ and 
$\underline{\scrE}_0\oplus \scrE_{q_1^-}\oplus\ldots\oplus \scrE_{q_N^-}$ have co-dimension $d=\mathrm{dim}\,E_0$ in $\scrE_{-\gamma,\max}$, 
the desired equality of both spaces follows. An application of Theorem \ref{thm:5.3} shows (E3). 
\end{proof}

In the previous proof we have verified that the model cone operator associated with $\underline{\Delta}^*$ coincides 
with the adjoint of the model cone operator associated with $\underline{\Delta}$, i.e., 
$\widehat{(\underline{\Delta}^*)}=(\widehat{\underline{\Delta}})^*$. Though this identity appears to be quite natural it does not hold 
true, in general, for closed extensions of arbitrary cone differential operators. In fact, here is a counter-example$:$ 

\begin{example}\rm
Let $0\not=\alpha\in\rz$. On the half-axis $\rz_+=(0,+\infty)$ let us consider  
 $$A_\alpha=\partial_t^2-\alpha\partial_t.$$
We will analyze the closed extensions in $L^2(\rz_+)=\calK^{0,0}(\rz_+)$. 
We can represent this operator in the form 
 $$A_\alpha=t^{-2}\big(f_0(-t\partial_t)+tf_1(-t\partial_t)\big),\quad f_0(z)=z(z+1), \quad f_1(z)=\alpha z.$$
In particular, the associated model cone operator is 
 $$\wh{A}_\alpha=\wh{A}=(-t\partial_t)(1-t\partial_t)=\partial_t^2;$$
obviously it does not depend on $\alpha$. 
Its maximal domain is $\scrD_{\max}(\wh{A})=\calK^{2,2}(\rz_+)\oplus \omega\wh{\scrE}$, where 
 $$\wh{\scrE}=\wh{\scrE}_0\oplus \wh{\scrE}_{-1},\qquad 
     \wh{\scrE}_0=\spk{1},\quad \wh{\scrE}_{-1}=\spk{t}; $$
throughout this example we shall write $\spk{u}=\mathrm{span}\{u\}$ and $\spk{u,v}=\mathrm{span}\{u,v\}$. 
Let us now determine the maximal domain of $A_\alpha$. To this end we apply Theorem $\ref{thm:theta}$. 

Clearly $G^{(1)}_{-1}=0$; hence $\scrE_{-1}=\wh{\scrE}_{-1}$ and $\theta_{-1}$ is the identity operator. 
A straightforward calculation shows that 
 $$(G^{(1)}_0u)(t)=\alpha t\wh{u}(0),\qquad u\in\scrC^\infty_{\mathrm{comp}}(\rz_+).$$
It follows that $\scrE_0=\spk{1+\alpha t}$ and that $\theta_0:\scrE_0\to\wh{\scrE}_0$ is given by $\theta(1+\alpha t)=1$. 
This shows that 
 $$\scrD_{\max}(A_\alpha)=\calK^{2,2}(\rz_+)\oplus \omega\scrE,\qquad \scrE=\spk{1,1+\alpha t}=\spk{1,t}.$$ 
Though $\scrE$ does not depend on $\alpha$, the isomorphism $\Theta=\Theta_\alpha$ of \eqref{eq:Grassmannian} does; in fact,  
 $$\Theta(\spk{a+bt})=\spk{a+(b-a\alpha)t},\qquad a,b\in\cz.$$
Since the formal adjoint of $A_\alpha$ is $A_{-\alpha}$, the  pairing $\scrE\times\scrE\to\cz$ that determines the domain 
of the adjoint of a closed extension of $A_\alpha$ is given by 
\begin{align*}
 [u,v]_\alpha^\prime&=(A_\alpha (\omega u),\omega v)-(\omega u,A_{-\alpha}(\omega v))\\
 &=((\omega u)^{\prime\prime},\omega v)-(\omega u,(\omega v)^{\prime\prime})
  -\alpha\big(((\omega u)^\prime,\omega v)+(\omega u,(\omega v)^\prime)\big)
\end{align*}
with the inner product $(\cdot,\cdot)$. in $L^2(\rz_+)$. We thus find the formula 
 $$[a+bt,c+dt]_\alpha^\prime=a\overline{d}-b\overline{c}+\alpha a\overline{c},\qquad a,b,c,d\in\cz.$$
Analogously, the pairing for the model cone operator is given by this formula with $\alpha=0$. 

Now let $\ulA$ be the closed extension of $A_\alpha$ with domain determined by $\underline{\scrE}=\spk{1+\alpha t}$. 
Then for the orthogonal space we find $\underline{\scrE}^\perp=\spk{1}$. The extension of $A_{-\alpha}$ with domain 
determined by $\underline{\scrE}^\perp$ is the adjoint of $\ulA$. The model cone operator $\ulwhA$ is determined by 
$\Theta_\alpha(\underline{\scrE})=\Theta_\alpha(\spk{1+\alpha t})=\spk{1}=\underline{\wh\scrE}$. Note that 
$\underline{\wh\scrE}$ is perpendicular to itself, i.e., $\ulwhA$ is self-adjoint. However, the model cone operator 
of $\ulA^*$ is the extension of $\wh{A}$ determined by 
$\Theta_{-\alpha}(\underline{\scrE}^\perp)=\Theta_{-\alpha}(\spk{1})=\spk{1-\alpha t}\not=\spk{1}$. Thus 
$\wh{(\ulA^*)}\not=(\ulwhA)^*$.  
\end{example}

\subsubsection{Short time existence for the porous medium equation}

We shall apply the Theorem of Cl\'ement and Li with $X_0= \calH^{s,\gamma}_p(\bz)$ and 
$X_1=\calH^{s+2,\gamma+2}_p(\bz)\oplus \omega \underline\scrE_0$.

Choose $1<p,q<\infty$ so large that 
\begin{eqnarray}\label{eq:pq}
\frac{n+1}p+\frac2q < 1, \ \text{ and } \frac{n-3}2+\frac2q<\gamma.
\end{eqnarray}
Moreover, fix 
\begin{eqnarray}\label{eq:s}
s>-1+\frac{n+1}{p}+\frac{2}{q}.
\end{eqnarray}

With the same proof as for \cite[Theorem 6.1]{RoidosSchrPME} we  obtain the proposition, below. In \cite{RoidosSchrPME},  $\Delta$ was supposed to be the Laplacian with respect to the straight cone metric. The proof, however, does not use this geometric assumption but only the fact that, for $c>0$,  $c-\underline \Delta$ is  $R$-sectorial of angle $\theta$ for every $\theta\in (0,\pi)$. In the present case,  $R$-sectoriality is implied by the existence of the bounded $H_\infty$-calculus. 

\begin{proposition}
Let $s,\gamma,p$ and $q$ be  as above and let $v\in (X_0,X_1)_{1-\frac1q,q}$
satisfy $v\ge \alpha>0$ for some constant $\alpha$. Then, for every $\theta\in (0,\pi)$, there exists a constant $c>0$ such that $c-u\underline \Delta:X_1\to X_0$ is $R$-sectorial of angle $\theta$. 
\end{proposition}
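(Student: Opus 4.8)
The plan is to reduce the statement to the $R$-sectoriality of $c-\underline\Delta$ established above and then to absorb the positive factor $v$ by a freezing-of-coefficients argument, following the proof of \cite[Theorem 6.1]{RoidosSchrPME} line by line; that proof uses only the $R$-sectoriality of $c-\underline\Delta$ together with some multiplier properties of $v$, and never the straightness of the cone metric. (Here $u$ in the statement is to be read as $v$; the argument is identical if one replaces $v$ by $v^{m-1}$.)

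First I would extract from \eqref{eq:pq} and \eqref{eq:s} the relevant mapping properties of $v$. By the Sobolev embedding theorem for Mellin--Sobolev spaces on $\bz$, these conditions force the real interpolation space $X_{1-1/q,q}=(X_0,X_1)_{1-1/q,q}$ to embed continuously into a space $\calH^{\sigma,\gamma'}_p(\bz)$ with $\sigma>(n+1)/p$ and a weight $\gamma'$ for which multiplication by its elements is bounded on both $X_0$ and $X_1$; in particular the elements of $X_{1-1/q,q}$ are continuous functions on $\bz$ and form a Banach algebra of multipliers for the spaces involved. Since $v\in X_{1-1/q,q}$ and $v\ge\alpha>0$, the reciprocal $v^{-1}$ lies in the same algebra, so multiplication by $v$ and by $v^{-1}$ are isomorphisms of $X_0$ and of $X_1$; in particular $c-v\underline\Delta$ with domain $X_1$ is a well-defined closed operator in $X_0$.

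Next, Proposition \ref{prop:hatDelta} gives that $-\underline\Delta$ satisfies (E1)--(E3) and has spectrum contained in $\overline{\rz}_+$; since, by Theorem \ref{thm:5.3}, $-\underline\Delta$ is parameter-elliptic with respect to every sector $\Lambda(\theta)$, $\theta>0$, the operator $c-\underline\Delta$ is for each $c>0$ sectorial of angle $\theta$ for every $\theta\in(0,\pi)$. By Theorem \ref{thm:delta} it has a bounded $H_\infty$-calculus on $X_0=\calH^{s,\gamma}_p(\bz)$, hence, by Proposition \ref{prop:MR}(a), it is $R$-sectorial of angle $\theta$ on $X_0$. Now I would run the argument of \cite[Theorem 6.1]{RoidosSchrPME}: cover $\bz$ by finitely many coordinate patches together with a neighbourhood of the conical point, freeze $v$ at a point $x_0$ of each patch, write $c-v\underline\Delta$ there as $v(x_0)(c/v(x_0)-\underline\Delta)$ — a positive multiple of an $R$-sectorial operator — plus a remainder, and use the multiplier bounds from the previous step together with the continuity and the tip-behaviour of $v$ to show that the remainder is $R$-bounded with small bound on a truncated sector $\{\lambda\in\Lambda(\theta),\ |\lambda|\ge c\}$; patching with a partition of unity and a Neumann series then yields $R$-sectoriality of angle $\theta$ for $c-v\underline\Delta$, with $c$ depending on $\theta$.

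The main obstacle, inherited from \cite{RoidosSchrPME}, is the uniform control of the freezing-of-coefficients remainder near the conical tip: one must show that the commutators of $\underline\Delta$ with the localizing functions and with $v-v(x_0)$ give operator families that are $R$-bounded with arbitrarily small bound, uniformly as $t\to 0$, in the weighted spaces. The conditions \eqref{eq:pq} and \eqref{eq:s} are tailored precisely to make this estimate work, through the embeddings of $X_{1-1/q,q}$ used above.
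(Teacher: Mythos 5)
Your proposal follows essentially the same route as the paper: the paper's proof consists precisely in invoking the proof of Theorem 6.1 of \cite{RoidosSchrPME}, observing that it never uses the straightness of the cone metric but only the $R$-sectoriality of $c-\underline\Delta$, which in the warped case is supplied by the bounded $H_\infty$-calculus of Theorem \ref{thm:delta} together with Proposition \ref{prop:MR}(a). Your additional sketch of the multiplier and perturbation arguments from \cite{RoidosSchrPME} is consistent with that reduction, so the proposal is correct and matches the paper's argument.
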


\begin{remark}\rm
There are alternative ways to obtain this result. The key point is  the 
$R$-sectoriality of $\underline \Delta$, which we infer here from the bounded $H_\infty$-calculus. Alternatively, one might proceed as in \cite[Theorem 4.1]{Roidos16} or use the technically very difficult argument in  \cite[Section 5]{RoidosSchr}.
\end{remark}

\begin{theorem}\label{thm:solution}
Choose $s,\gamma,p$ and $q$ as in \eqref{eq:gamma2}, \eqref{eq:pq} and \eqref{eq:s}. Then for any strictly positive initial value
$u_0\in (X_0,X_1)_{1-1/q,q}$ there exists a $T>0$ such that  the porous medium equation  \eqref{eq:PME} has a unique solution 
$$u\in L^q\big(0,T; \calH^{s+2,\gamma+2}_p(\bz) \oplus \omega \underline\scrE_0\big)
\cap W^{1,q}\big(0,T; H^{s,\gamma}_p(\bz)\big).$$
\end{theorem}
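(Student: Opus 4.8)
The plan is to verify the hypotheses of the Cl\'ement--Li theorem (Theorem \ref{CL}) with $X_0=\calH^{s,\gamma}_p(\bz)$, $X_1=\calH^{s+2,\gamma+2}_p(\bz)\oplus\omega\underline\scrE_0$, and the quasilinear operator $A(u)=-u\underline\Delta$ (up to the shift $c$), $f(t,u)$ the given forcing term, $g=0$. First I would invoke the preceding proposition to know that for every strictly positive $v$ in the real interpolation space and every $\theta\in(0,\pi)$ there is $c>0$ with $c-v\underline\Delta$ $R$-sectorial of angle $\theta$; in particular, choosing $\theta<\pi/2$ and using that the Mellin--Sobolev spaces are UMD, Proposition \ref{prop:MR}(b) gives maximal $L^q$-regularity of $A(u_0)=c-u_0\underline\Delta:X_1\to X_0$. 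So the central structural hypothesis of Theorem \ref{CL} holds on an open neighborhood $U$ of $u_0$ consisting of functions bounded below by a positive constant (this is an open condition in $X_{1-1/q,q}=(X_0,X_1)_{1-1/q,q}$, which by \eqref{eq:pq} and \eqref{eq:s} embeds into a space of continuous functions on $\bz$ bounded away from the conical point, by the Sobolev embedding for Mellin--Sobolev spaces).

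Next I would check (H1): the map $u\mapsto -u\underline\Delta$, $U\to\scrL(X_1,X_0)$, is locally Lipschitz. This reduces to the fact that multiplication by $u$ maps $X_0$ to $X_0$ boundedly with operator norm controlled by a suitable norm of $u$, and that $u\mapsto M_u$ is Lipschitz from $X_{1-1/q,q}$ into the multiplier space of $X_0$; this is exactly the pointwise-multiplication estimate used in \cite{RoidosSchrPME} and follows from the same Sobolev-embedding/Banach-algebra type argument, using that the chosen $s,p,q$ in \eqref{eq:pq}, \eqref{eq:s} place $X_{1-1/q,q}$ in (or near) the Banach algebra range. Actually the equation \eqref{eq:PME} is for $u^m$, so one rewrites it in quasilinear form: with $v=u$ the diffusion term is $-\Delta(u^m)=-mu^{m-1}\Delta u-\text{(lower order)}$, or more cleanly one substitutes and obtains $A(u)=-mu^{m-1}\underline\Delta$ plus a term of lower order absorbed into $f$; the holomorphy of $x\mapsto x^{m-1}$ away from $0$ (legitimate since $u$ stays positive) together with the multiplication estimates gives (H1). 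Hypothesis (H2) is immediate from the assumed holomorphy of $f$ in $u$ and Lipschitz dependence in $t$, again using that $X_{1-1/q,q}$ multiplies into $X_0$, and (H3) is trivial with $g=0$.

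Having verified (H1)--(H3) and maximal regularity, Theorem \ref{CL} yields $T>0$ and a unique
$$u\in L^q(0,T;X_1)\cap W^{1,q}(0,T;X_0)=L^q\big(0,T;\calH^{s+2,\gamma+2}_p(\bz)\oplus\omega\underline\scrE_0\big)\cap W^{1,q}\big(0,T;\calH^{s,\gamma}_p(\bz)\big)$$
solving \eqref{eq:PME} on $(0,T)$, which is the assertion. One small point to address: the shift $c$ introduced to get $R$-sectoriality of $c-u_0\underline\Delta$ changes the equation by a bounded zero-order term $-cu$, which is absorbed into the forcing $f(t,u)\rightsquigarrow f(t,u)-cu$ without affecting (H2); equivalently one applies Theorem \ref{CL} to the shifted problem and observes the solution set is unchanged.

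The main obstacle I expect is hypothesis (H1)--(H2), i.e., establishing that pointwise multiplication and composition with the holomorphic nonlinearity $x\mapsto x^m$ act continuously and locally Lipschitz-continuously on the relevant Mellin--Sobolev spaces and their interpolation scales — this is where the precise numerical conditions \eqref{eq:pq} and \eqref{eq:s} are used, to ensure that $X_{1-1/q,q}$ consists of genuine (bounded, bounded-below) functions and behaves like a multiplier algebra on $X_0$. This is precisely the technical heart of \cite[Theorem 6.1]{RoidosSchrPME}, and the point of the present paper is that that argument never used straightness of the cone metric: it needed only the $R$-sectoriality (here: bounded $H_\infty$-calculus, from Theorem \ref{thm:delta} via Proposition \ref{prop:hatDelta}) and the mapping properties of the spaces, both of which persist in the warped case. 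So the proof consists of quoting that argument verbatim with $\Delta$ now the warped Laplacian, the only genuinely new input being Theorem \ref{thm:delta}.
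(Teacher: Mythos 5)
Your proposal follows essentially the same route as the paper: apply the Cl\'ement--Li theorem with $X_0=\calH^{s,\gamma}_p(\bz)$ and $X_1=\calH^{s+2,\gamma+2}_p(\bz)\oplus\omega\underline\scrE_0$, obtain maximal $L^q$-regularity of $A(u_0)$ from the $R$-sectoriality of $c-u_0\underline\Delta$ (via the preceding proposition, the bounded $H_\infty$-calculus of Theorem \ref{thm:delta} and Proposition \ref{prop:MR}), and quote the multiplication/nonlinearity estimates from the earlier work of Roidos--Schrohe for (H1)--(H2), with (H3) trivial since $g=0$. The extra details you supply (quasilinear rewriting, absorption of the shift $c$ into $f$, positivity on a neighborhood in the interpolation space) are consistent with what the paper delegates to the cited references, so the argument is correct and matches the paper's proof.
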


\begin{proof}
We apply Cl\'ement and Li's theorem. 
The maximal regularity of the operator $u_0\underline\Delta$ follows from 
Proposition \ref{prop:MR}.
Properties (H1) and (H2) have been shown in \cite[Theorem 6.5]{RoidosSchr};  (H3) is trivially fulfilled,  since $g=0$.
\end{proof}


\bibliographystyle{amsalpha}

\end{document}